\newtheorem{theorem}{Theorem}[section]
\newtheorem{corollary}[theorem]{Corollary}
\newtheorem{lemma}[theorem]{Lemma}
\newtheorem{proposition}[theorem]{Proposition}
\theoremstyle{definition}
\newtheorem{definition}[theorem]{Definition}
\theoremstyle{remark}
\newtheorem{remark}[theorem]{Remark}
\newtheorem{obs}[theorem]{Observation}
\newtheorem{example}[theorem]{Example}
\providecommand{\keywords}[1]{\textbf{\textit{Index terms---}} #1}
\font\myfont=cmr12 at 18pt
\begin{document}

\date{}

\title{{\myfont Contrapositionally Complemented Pseudo-Boolean Algebras and Intuitionistic Logic with Minimal Negation}}

\author[1]{Anuj Kumar More\thanks{The work has been done as part of the first author's Ph.D. thesis \cite{anuj2019}. The work was supported by the Council of Scientific and Industrial Research (CSIR) India [09/092(0875)/2013-EMR-I to A.K.M.] and Indian Institute of Technology (IIT) Kanpur, India.}}
\author[2]{Mohua Banerjee}
\affil[1,2]{Department of Mathematics and Statistics \newline
          Indian Institute of Technology Kanpur, Kanpur 208016, India.}
\affil[1]{\textit{more39@gmail.com}}
\affil[2]{\textit{mohua@iitk.ac.in}}


\maketitle

\begin{abstract}
The article is a  study of two  algebraic structures, the `contrapositionally complemented pseudo-Boolean algebra' ({\it ccpBa}) and `contrapositionally $\vee$ complemented pseudo-Boolean algebra' ({\it c$\vee$cpBa}). The algebras have recently been  obtained from a topos-theoretic study of categories of rough sets. The salient feature of these algebras is that there are two negations, one intuitionistic and another minimal in nature, along with a condition connecting the two operators. We study properties of these algebras, give examples, and compare them with relevant existing algebras.  `Intuitionistic Logic with Minimal Negation (ILM)'  corresponding to {\it ccpBa}s and its extension ${\rm ILM}$-${\vee}$ for {\it c$\vee$cpBa}s, are then investigated.  Besides its relations with intuitionistic and minimal logics, ILM is observed to be related to Peirce's logic. With a focus on properties of the two negations, two kinds of relational semantics for  ILM and ${\rm ILM}$-${\vee}$ are obtained, and an inter-translation between the two semantics is provided.
Extracting  features of the two negations in the algebras, a further investigation  is made, 
following logical studies of negations that define the operators independently of the binary operator of implication. 
Using Dunn's logical framework for the purpose,  two  logics $K_{im}$ and $K_{im-{\vee}}$ are presented, where the language does not include implication. $K_{im}$-algebras are  reducts of {\it ccpBa}s. The negations in the algebras are shown to occupy distinct positions in an enhanced form of Dunn's Kite of negations. Relational semantics for $K_{im}$ and $K_{im-{\vee}}$ are given, based on Dunn's compatibility frames. Finally, relationships are established  between the different algebraic and relational semantics  for the logics defined in the work. 

\keywords{pseudo-Boolean algebras, contrapositionally complemented lattices, intuitionistic logic, minimal logic, compatibility frames}
\end{abstract}


\section{Introduction}
\label{intro}

The work presents a study of  two  algebraic structures, `contrapositionally complemented pseudo-Boolean algebra' ({\it ccpBa}) and `contrapositionally $\vee$ complemented pseudo~- Boolean algebra' ({\it c$\vee$cpBa}). The salient feature of these algebras is that there are two negations amongst the operations defining the structures, one negation ($\neg$) being  intuitionistic in nature and another (${\sim}$),  a minimal negation. The two operations are connected by the involutive property for a fixed element ${\sim} 1$, with respect to the negation $\neg$. The algebraic structures came to the fore during a topos-theoretic investigation of categories formed by rough sets \cite{P}. Though the current article does not deal  with category theory or rough sets, in order to give a motivation for our study, the work leading to the definitions of {\it ccpBa} and {\it c$\vee$cpBa} is briefly recounted in the following paragraph.


Rough sets were  introduced by Pawlak in 1982 to deal with situations where only 
partial or inadequate information may be available about objects of a domain of discourse. In such a situation, there may be some objects of the domain that are not distinguishable
from others. Mathematically, the scenario is represented by a pair $(\mathbb{U},R)$ called {\it approximation space}, where $\mathbb{U}$ is a set (domain of discourse) and $R$  an equivalence relation on $\mathbb{U}$. Let $[x]$ denote the equivalence class of $x~(\in \mathbb{U})$, giving all objects of the domain that are {\it indiscernible} from $x$. For any subset $U$ of $\mathbb{U}$,   the {\it upper approximation} and the  {\it lower approximation} of $U$ in  $(\mathbb{U},R)$ are respectively defined as\\
\centerline{$\underline{U} := \{x \in \mathbb{U} ~|~ [x] \subseteq U \}$ \mbox{ and } $\overline{U} := \{x \in \mathbb{U} ~|~ [x] \cap U \neq \emptyset \}.$}
$\underline{U}$ gives the objects of the domain that {\it definitely} belong to $U$, while $\overline{U}$ consists of all objects that are {\it possibly}  in $U$. $\overline{U} {\setminus} \underline{U}$ is called the {\it boundary} of $U$, giving the region of `uncertainty'. 
So the possible region of $U$  includes its definite and boundary regions. $U$ is called a {\it rough set} in $(\mathbb{U},R)$,  described by its lower and upper approximations. (There are other equivalent definitions of rough sets in literature, cf. \cite{BC4}.)
In 1993, the study of categories of rough sets was initiated   by Banerjee and Chakraborty  \cite{BC1,BC2}. A category ROUGH was proposed  with objects as triples of the form  $(\mathbb{U},R,U)$, $(\mathbb{U},R)$ being an approximation space and  $U \subseteq \mathbb{U}$. A morphism between two ROUGH-objects is defined in such a way that it maps the possible and definite regions of the rough set in the domain, respectively into  the possible and definite regions of the rough set in the range. The work was followed up by More and Banerjee  in \cite{AM,AM2}.
A major objective in  \cite{BC1,BC2,AM,AM2} was to study the topos-theoretic properties of ROUGH and associated categories. In \cite{AM}, it was shown that ROUGH forms a quasitopos. Now any topos or quasitopos has an inherent algebraic structure: the collection of (strong) subobjects of any (fixed) object forms a pseudo-Boolean algebra \cite{elephant}. 
The work in \cite{AM,AM2} explored the notion of negation in the set of strong subobjects of any ROUGH-object $(\mathbb{U},R,U)$. This set 
forms a Boolean algebra   $\mathcal{M}(U):=(\mathcal{M},1,0,\vee,\wedge,\rightarrow,\neg)$, where $\neg$ denotes the Boolean negation.
However, as is well-known, algebraic structures formed by rough sets are typically non-Boolean, cf. \cite{BC4}. Iwi{\'n}ski's {\it rough difference} operator \cite{I} giving relative rough complementation
was subsequently incorporated in $\mathcal{M}(U)$, in the form of a new negation ${\sim}$.
Then, for any subobject $(\mathbb{U},R,A)$ of {\rm ROUGH}-object $(\mathbb{U},R,U)$ and for $C (\subseteq \mathbb{U})$ such that ${\sim} (\mathbb{U},R,A)=(\mathbb{U},R,C)$, the possible region of $C$ contains the boundary region of $A$ \cite{anuj2019,AM} -- which is meaningful, as the boundary is the region of uncertainty. But this is not the case if ${\sim}$ is replaced by $\neg$.
An investigation of properties of the two negations $\neg,\sim$ and the enhanced structure $(\mathcal{M},1,0,\vee,\wedge,\rightarrow,\neg,{\sim})$ resulted in the definition of  {\it ccpBa} and {\it c${\vee}$cpBa} \cite{AM2}. 


The study of algebraic structures with negation  has a vast literature (cf. e.g. \cite{HR}). Some such structures  that get directly related  to {\it ccpBa}s and {\it c${\vee}$cpBa}s are Boolean algebras, pseudo-Boolean algebras (also called Heyting algebras), contrapositionally complemented lattices and Nelson algebras (cf. \cite{GN,odintsov2008,HR,sikorski1953,vakarelov1977}). 
An important direction of algebraic studies since  the results obtained by Stone (cf. \cite{HR}), has been the investigation of representation theorems connecting algebras and topological spaces   (cf. \cite{nick2006,celani2014,clark1998,davey2002}). A representation result for {\it ccpBa} with respect to  topological spaces is given in \cite{AM2}. Here, we give duality results for both {\it ccpBa}s and {\it c${\vee}$cpBa}s with respect to topological spaces that are certain restrictions of Esakia spaces (cf. \cite{davey2002,celani2014}), the topological spaces corresponding to pseudo-Boolean algebras.

A study  of classes of algebras naturally leads to an investigation of corresponding logics. In \cite{anuj2019,AM2}, the logic ILM - {\it Intuitionistic logic with minimal negation}, and ILM-$\vee$, corresponding to the algebras {\it ccpBa} and  its extension {\it c${\vee}$cpBa} respectively, have been defined. The language of ILM has two negations $\neg$, ${\sim}$, and propositional constants $\top$, $\bot$, apart from other propositional connectives. The nature of the two connectives of negation present in the logics expectedly yields relations of ILM with intuitionistic logic (IL) and minimal logic (ML). (Recall that ML  is the logic corresponding to the class of contrapositionally complemented lattices \cite{Johansson}.) Utilizing the notions of `interpretation' as given in \cite{CD,GFPO,FK,HR}, a further comparison is made between ILM and IL, ML in \cite{AM2}. In continuation of the comparative study, in this work, we relate ILM with Peirce's logic, which is an extension of {\rm ML} obtained by adding Peirce's law $((\alpha \rightarrow \beta) \rightarrow \alpha) \rightarrow \alpha)$ \cite{prior1958,Segerberg1968}. In fact, ILM gets related to a special case of Peirce's logic, ${\rm JP'}$, which is ML along with the axiom $(({\sim}\top \rightarrow \beta) \rightarrow {\sim}\top) \rightarrow {\sim}\top)$. As mentioned by Segerberg \cite{Segerberg1968} and discussed in \cite{odintsov2005}, ${\rm JP'}$ (also called Glivenko's logic) is the weakest logic amongst the extensions of ML in which ${\sim}{\sim} \alpha$ is derivable whenever $\alpha$ is derivable.
 
A logic may be imparted multiple semantics, aside from an algebraic one.
For propositional logics with negation, different relational semantics have been introduced,  for instance in  \cite{Dosen1986,Dosen1999,Dunn2005,YG,Segerberg1968,Vakarelov1989}. As is well-known, Kripke \cite{Kripke1965a,Kripke1965} first studied relational semantics for IL, where frames are partially ordered sets $(W,\leq)$ called {\it normal} frames. On these frames, Segerberg \cite{Segerberg1968} added  a hereditary set $Y_0$ of `queer' worlds at each of which $\bot$ (`falsum') holds. By adding conditions on such frames, natural relational semantics for ML and various extensions of ML are  obtained \cite{odintsov2008,odintsov2007,Segerberg1968}. One such semantics   given by Woodruff \cite{Woodruff1970}, involving {\it sub-normal} frames, is used in our work to obtain relational semantics for {\rm ILM} and {\rm ILM-${\vee}$}.

As Do{\v{s}}en remarked in \cite{Dosen1986}, a drawback of the Segerberg-style semantics is that it cannot be used to characterize logics with negation weaker than minimal negation. This problem was addressed by Do{\v{s}}en and Vakarelov \cite{Vakarelov1989} independently. In their work, taking motivation from Kripke frames in modal logic and in particular, the accessibility relations in the frames, negation is considered as an impossibility (modal) operator. 
This approach results in another relational semantics for {\rm ILM} and {\rm ILM-${\vee}$}. We give a Do{\v{s}}en-style semantics for the two logics, and show that an inter-translation exists between the two (Segerberg-style and Do{\v{s}}en-style) relational semantics in the lines of that given for ML in \cite{Dosen1986}. 

In the logical systems given by Do{\v{s}}en, the alphabet of the language has the connectives of implication, disjunction, conjunction and negation. Studying negation independently, particularly in the absence of implication, constitutes an important area of work on logics. Dunn \cite{Dunn1990,dunn1993,Dunn1995,Dunn1996,Dunn2005}, Vakarelov \cite{Vakarelov1989} and others \cite{restall,shramko2005} have studied logics with negations and without implication. A basic feature of these logics is that the logical consequence, in the absence of  implication, is defined through  {\it sequents}: pairs of formulas of the form $(\phi, \psi)$, written as $\phi \vdash \psi$. 
Properties are introduced in the logics as axioms or rules, defining  {\it pre-minimal}, {\it minimal}, {\it intuitionistic} or other negations. To round up the study of negations defined through ${\rm ILM}$ and ${\rm ILM}$-${\vee}$, we make an investigation adopting the approach of Dunn and Vakarelov. Extracting the features of the two negations,  logics $K_{im}$ and $K_{im-{\vee}}$ are defined. $K_{im}$-algebras are  reducts of {\it ccpBa}s, while $K_{im-{\vee}}$-algebras are  reducts of {\it c${\vee}$cpBa}s. The properties of the algebras show that Dunn's Kite of negations can be enhanced to one accommodating pairs of negations, two distinct nodes in which are occupied by the pair defining the $K_{im}$ and $K_{im-{\vee}}$-algebras.
Dunn's compatibility frames help in specifying the relational semantics for $K_{im}$ and $K_{im-{\vee}}$.

Connections between algebras and relational frames have been studied in literature, together with duality results (cf. e.g. \cite{hartonas1997,Johnstone1986}).
Kripke \cite{Kripke1965a} demonstrated the connection between normal frames and pseudo-Boolean algebras (cf. \cite{bezhanishvili,chagrov}). 
We observe here that this can easily be extended to obtain connections between sub-normal frames and {\it ccpBa}s. As compatibility frames provide a relational semantics for $K_{im}$, we shall also establish this to and fro connection between relational and algebraic semantics for $K_{im}$ and $K_{im-{\vee}}$.

The paper is organised as follows. In the next section, we give the definitions of {\it ccpBa}s and {\it c${\vee}$cpBa}s, and some of their properties. A few  examples of finite {\it ccpBa}s and {\it c${\vee}$cpBa}s are studied in Section \ref{sec-eg-ccpba}, while representation results are given in Section \ref{representations}. We  then move to the  logics  ILM and ILM -${\vee}$ in Section \ref{seclogic}. Properties and  relationships with IL, ML, and Peirce's logic are discussed. In Sections \ref{sec-KripkeILM} and \ref{subsec-KripkeILM}, we give, respectively, the Do{\v{s}}en-style and Segerberg-style semantics  for the logics. To establish completeness results, an inter-translation between frames in the two semantics is used -- this is given  in Section \ref{inter}. In Section \ref{sec-bdll}, we study  the negations in systems without implication. The logics $K_{im}$ and $K_{im-\vee}$  along with   algebraic and relational semantics are discussed. Connections between algebraic and relational semantics of the logics are observed in Section \ref{sec-compare}. We conclude the work in Section \ref{conc}.

Hereafter, the symbols $\forall$, $\exists$, $\Rightarrow$, $\Leftrightarrow$, $\&$ ({\it and}), {\it or}, {\it if} $\ldots$ {\it then} and $not$ will be used with the usual meanings in the metalanguage. For basic definitions and results, we refer to \cite{HR} (for algebra and logic), \cite{davey2002} (representation and duality), and \cite{Dosen1986,Dunn2005,Segerberg1968} (relational semantics).

\section{The algebras \textit{ccpBa} and \textit{c\texorpdfstring{{$\vee$}}cpBa}}
\label{sec-ccpba}

%

%

Let us first recall the definitions  of the two algebraic structures.
\begin{definition} \rm{\cite{AM2}}
\label{ccpBa}
$ $\\
An abstract algebra $\mathcal{A}:=(A,1,0, \vee, \wedge, \rightarrow,\neg,{\sim} )$ is called a {\it contrapositionally complemented pseudo-Boolean algebra} ({\it ccpBa}), if the reduct $(A,1,0, \vee,\wedge, \rightarrow)$ forms a bounded relatively pseudo-complemented (\textit{rpc}) lattice, and for all $a \in A$,\\
\centerline{${\neg} a = a \rightarrow 0$ and ${{\sim}} a = a \rightarrow (\neg \neg {\sim} 1)$.}
If, in addition,  for all $a \in A$, $a \vee {\sim} a =1$, we call $\mathcal{A}$ a {\it contrapositionally $\vee$ complemented pseudo-Boolean algebra} ({\it c${\vee}$cpBa}).
\end{definition}

\begin{obs}
\label{obs-ccpba}
{\rm
For the negation ${\sim}$, the condition ${\sim} a = a \rightarrow (\neg \neg {\sim} 1)$ can be equivalently expressed as
\begin{enumerate}[label={{\rm (\arabic*)}}, leftmargin=1.5 \parindent, noitemsep, topsep=0pt] {\rm
\item ${\sim} a = a \rightarrow {\sim} 1$ and
\item ${\sim} 1 = (\neg \neg {\sim} 1)$.
}\end{enumerate}
}
\end{obs}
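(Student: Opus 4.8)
The plan is to prove the stated equivalence by showing that the single displayed condition ${\sim} a = a \rightarrow (\neg\neg{\sim} 1)$ (for all $a \in A$) holds if and only if both (1) and (2) hold. The only non-bookkeeping ingredient I expect to need is the elementary identity $1 \rightarrow x = x$, valid in every bounded \textit{rpc} lattice: indeed $1 \rightarrow x$ is by definition the greatest $c$ with $1 \wedge c \le x$, i.e.\ the greatest $c \le x$, which is $x$ itself.

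For the direction ``(1) and (2) $\Rightarrow$ original condition'', I would simply substitute (2) into (1): from ${\sim} a = a \rightarrow {\sim} 1$ and ${\sim} 1 = \neg\neg{\sim} 1$ one gets ${\sim} a = a \rightarrow (\neg\neg{\sim} 1)$ for all $a$. This is a one-line calculation with no subtlety.

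For the converse, assume ${\sim} a = a \rightarrow (\neg\neg{\sim} 1)$ for all $a$. First I would instantiate this hypothesis at $a = 1$; using $1 \rightarrow x = x$ it yields ${\sim} 1 = \neg\neg{\sim} 1$, which is precisely (2). Then, feeding this identity back into the hypothesis, ${\sim} a = a \rightarrow (\neg\neg{\sim} 1) = a \rightarrow {\sim} 1$ for all $a$, which is (1). Hence both (1) and (2) follow, completing the equivalence.

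Since each direction reduces to a substitution together with the single lattice identity $1 \rightarrow x = x$, I do not anticipate any genuine obstacle here; the only point requiring any care — the ``hard part'', such as it is — is to remember to evaluate the hypothesis at $a = 1$ in order to extract (2) before rewriting the condition in the form (1).
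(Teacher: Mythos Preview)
Your proof is correct. The paper states this as an observation without supplying a proof, so there is nothing to compare against beyond the implicit reasoning; your argument (instantiate at $a=1$ using $1 \rightarrow x = x$ to obtain (2), then substitute back to get (1), with the converse being trivial) is exactly the intended verification.
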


\noindent  In any {\it ccpBa} $\mathcal{A}:=(A,1,0, \vee, \wedge, \rightarrow,\neg,{\sim})$, the property $\neg a = a \rightarrow 0$ makes the bounded \textit{rpc} lattice 
$(A,1,0, \vee,\wedge,\rightarrow)$ a pseudo-Boolean algebra ({\it pBa})  \cite{HR} with respect to the negation $\neg$. In any  {\it rpc} lattice $(A,1,\vee, \wedge,\rightarrow)$, if negation ${\sim}$ is defined as in Observation \ref{obs-ccpba}(1), the resulting lattice $(A,1,\vee, \wedge,\rightarrow,{\sim})$ forms a contrapositionally complemented ({\it cc}) lattice \cite{HR}.  Thus a {\it ccpBa} can be considered as an amalgamation of a $cc$ lattice and a $pBa$ satisfying Condition (2) in Observation \ref{obs-ccpba} --  the reason for naming the algebraic structure  `contrapositionally-complemented pseudo-Boolean algebra'.

In any {\it cc} lattice, the negation ${\sim}$ is completely determined by $\rightarrow$ and the element ${\sim} 1$. Moreover, the element ${\sim} 1$ need not be the bottom element of the lattice. Note that in a  {\it ccpBa}, there is a bottom element $0$, which defines the negation $\neg$. Observation \ref{obs-ccpba}(2) then gives the involutive property for the element ${\sim} 1$ of a {\it ccpBa}, with respect to the negation $\neg$. We shall observe through examples  in Section \ref{sec-eg-ccpba} that a {\it ccpBa} (a) need not have the involutive property for {\it all} elements, and (b) ${\sim} 1$ need not be the bottom element $0$.

In a {\it c${\vee}$cpBa}, the reduct lattice forms a {\it contrapositionally $\vee$ complemented} ({\it c${\vee}$c}) lattice \cite{MN}, and thus the nomenclature  `contrapositionally $\vee$ complemented pseudo-Boolean algebra'.
Condition (2)  of Observation \ref{obs-ccpba}, as we shall see in Section \ref{sec-eg-ccpba}, is a distinctive property of {\it ccpBa}s that is not true in general for an arbitrary bounded {\it cc} lattice.

Let us now list some properties of {\it ccpBa}s. For properties of {\it cc}, {\it rpc} lattices and {\it pBa}s, we refer to \cite{Goldblatt1974,HR}.

\begin{proposition}
\label{prop1-ccpBa}
In any {\it ccpBa} $\mathcal{A}:=(A,1,0, \vee, \wedge, \rightarrow,\neg,{\sim} )$, the following hold for all $a,b \in  A$.
\begin{multicols}{2}
\begin{enumerate}[label={{\rm (\arabic*)}}, leftmargin=1.5 \parindent, noitemsep, topsep=0pt]
\item $a \rightarrow {\sim}b = b \rightarrow {\sim} a$
\item $a \leq {\sim}{\sim} a$
\item ${\sim}{\sim}({\sim} 1 \rightarrow a) = 1$
\item ${\sim} a = \neg (a \wedge \neg {\sim} 1)$
\item $\neg a \leq {\sim} a $
\item $ a \leq {\sim} \neg a$
\item $ \neg {\sim} a \leq {\sim} \neg a$
\item $ {\sim} a = \neg \neg {\sim} a$
\item $ \neg {\sim} \neg a \leq \neg a$
\end{enumerate}
\end{multicols}
\end{proposition}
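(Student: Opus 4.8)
The plan is to eliminate ${\sim}$ in favour of the relative pseudo-complement $\rightarrow$ and to read off each item from standard Heyting-algebra arithmetic. Put $k := {\sim}1$; by Observation \ref{obs-ccpba}, ${\sim}a = a\rightarrow k$ for all $a\in A$ and $k = \neg\neg k$. The only facts about the underlying {\it pBa} that I shall need are the currying law $a\rightarrow(b\rightarrow c) = (a\wedge b)\rightarrow c$, monotonicity of $\rightarrow$ in its second argument and antitonicity in its first, $a\leq\neg\neg a$, $a\leq(a\rightarrow b)\rightarrow b$, $\neg\neg(a\wedge b) = \neg\neg a\wedge\neg\neg b$, and $\neg\neg\neg a = \neg a$; all of these may be found in \cite{Goldblatt1974,HR}.

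Granting these, most items are short. Item (1) reads $a\rightarrow(b\rightarrow k) = (a\wedge b)\rightarrow k = (b\wedge a)\rightarrow k = b\rightarrow(a\rightarrow k)$. Item (2) is the instance $b := k$ of $a\leq(a\rightarrow b)\rightarrow b$. Item (4) is $\neg(a\wedge\neg k) = a\rightarrow(\neg k\rightarrow 0) = a\rightarrow\neg\neg k = a\rightarrow k$, using $k = \neg\neg k$. Item (5) is just $a\rightarrow 0\leq a\rightarrow k$ since $0\leq k$; then (6) follows by applying (5) to $\neg a$, which gives $\neg\neg a\leq{\sim}\neg a$, and composing with $a\leq\neg\neg a$; (7) follows from (5) and antitonicity, giving $\neg{\sim}a\leq\neg\neg a\leq{\sim}\neg a$; and (9) follows from (5) applied to $\neg a$ together with $\neg\neg\neg a=\neg a$, giving $\neg{\sim}\neg a\leq\neg\neg\neg a=\neg a$. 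For (8) it suffices to prove $\neg\neg{\sim}a\leq{\sim}a$, the reverse inequality being $x\leq\neg\neg x$; and indeed $\neg\neg(a\rightarrow k)\wedge a\leq\neg\neg(a\rightarrow k)\wedge\neg\neg a=\neg\neg\big((a\rightarrow k)\wedge a\big)\leq\neg\neg k=k$, whence $\neg\neg(a\rightarrow k)\leq a\rightarrow k$.

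The only item that needs a genuine idea is (3). Here I would first observe that $\neg{\sim}1 = \neg k$ lies below ${\sim}1\rightarrow a = k\rightarrow a$, because $\neg k\wedge k = 0\leq a$. Antitonicity of $\rightarrow$ in the first argument then gives $({\sim}1\rightarrow a)\rightarrow{\sim}1\leq\neg{\sim}1\rightarrow{\sim}1$, and the right-hand side is $\leq{\sim}1$: since $(\neg{\sim}1\rightarrow{\sim}1)\wedge\neg{\sim}1\leq{\sim}1\wedge\neg{\sim}1 = 0$, we get $\neg{\sim}1\rightarrow{\sim}1\leq\neg\neg{\sim}1 = {\sim}1$ by Observation \ref{obs-ccpba}. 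Hence ${\sim}({\sim}1\rightarrow a)\leq{\sim}1$; applying ${\sim}(\,\cdot\,) = (\,\cdot\,)\rightarrow{\sim}1$ once more and using that $x\leq y$ implies $x\rightarrow y = 1$, we obtain ${\sim}{\sim}({\sim}1\rightarrow a) = 1$. So the main obstacle is spotting $\neg{\sim}1\leq{\sim}1\rightarrow a$, which is exactly what collapses the double ${\sim}$ via the regularity $\neg\neg{\sim}1 = {\sim}1$; everything else is bookkeeping with the substitution ${\sim}a = a\rightarrow{\sim}1$.
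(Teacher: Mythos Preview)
Your proof is correct and follows the same overall strategy as the paper: translate ${\sim}a$ into $a\rightarrow{\sim}1$ and compute in the underlying Heyting algebra. The tactical choices differ in a few places, and it is worth noting where. For (3) you go via $\neg{\sim}1\leq{\sim}1\rightarrow a$ and then bound $({\sim}1\rightarrow a)\rightarrow{\sim}1$ by $\neg{\sim}1\rightarrow{\sim}1\leq\neg\neg{\sim}1={\sim}1$; the paper instead computes ${\sim}({\sim}1\rightarrow a)$ directly as $\neg\neg{\sim}1$ using $({\sim}1\rightarrow 0)\wedge({\sim}1\rightarrow a)=\neg{\sim}1$. For (7) and (9) your antitonicity arguments are noticeably shorter than the paper's. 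Conversely, for (8) the paper has a cleaner one-liner via your item (4): $\neg\neg{\sim}a=\neg\neg\neg(a\wedge\neg{\sim}1)=\neg(a\wedge\neg{\sim}1)={\sim}a$, avoiding the $\neg\neg(x\wedge y)=\neg\neg x\wedge\neg\neg y$ identity. None of these differences is substantive; both proofs amount to the same Heyting-algebra bookkeeping.
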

\begin{proof} $ $ \newline
(1) and (2) follow directly from properties of {\it cc} lattices and Definition \ref{ccpBa}.\\
\noindent (4): In the {\it rpc} lattice $(A,1,\vee, \wedge, \rightarrow)$, for any $a,b,c \in A$,\\
(i) $(a \rightarrow b) \wedge (a \rightarrow c) = (a \rightarrow (b \wedge c))$ and (ii) $a \rightarrow (b \rightarrow c) = (a \wedge b) \rightarrow c$.\\
Using (i), $({\sim} 1 \rightarrow 0) \wedge ({\sim} 1 \rightarrow a) = {\sim} 1 \rightarrow (0 \wedge a)$ = ${\sim} 1 \rightarrow 0 = \neg {\sim} 1$.\\
Now, ${\sim}({\sim} 1 \rightarrow a) = ({\sim} 1 \rightarrow a ) \rightarrow \neg \neg {\sim} 1 = \neg {\sim} 1 \rightarrow \neg ({\sim} 1 \rightarrow a) = \neg {\sim} 1 \rightarrow (({\sim}1 \rightarrow a) \rightarrow 0) = (\neg {\sim} 1 \wedge ({\sim}1 \rightarrow a)) \rightarrow 0) = (({\sim} 1 \rightarrow 0)\wedge ({\sim}1 \rightarrow a)) \rightarrow 0)$. Using (ii), $ {\sim}({\sim} 1 \rightarrow a)= \neg {\sim} 1 \rightarrow 0 = \neg \neg {\sim} 1.$\\
Using {\it rpc} property $a \rightarrow b = 1 \Leftrightarrow a \leq b$, we have ${\sim}({\sim} 1 \rightarrow a)  \leq \neg \neg {\sim} 1 
\Rightarrow {\sim}({\sim} 1 \rightarrow a) \rightarrow \neg \neg {\sim} 1 = 1$. Therefore, ${\sim}{\sim}({\sim} 1 \rightarrow a) = 1$.\\
\noindent (5): ${\sim} a = a \rightarrow (\neg \neg {\sim} 1) = a \rightarrow (\neg {\sim} 1 \rightarrow 0) = (a \wedge \neg {\sim} 1) \rightarrow 0=\neg (a \wedge \neg {\sim} 1)$.\\
(6): Using {\it rpc} property $b \leq c \Rightarrow (a \rightarrow b) \leq (a \rightarrow c)$, we have  $0 \leq {\sim} 1 \Rightarrow a \rightarrow 0 \leq  a \rightarrow {\sim} 1$.  Therefore, $\neg a \leq {\sim} a$.\\
\noindent (7): From (6) and {\it cc} lattice property $a \leq \neg \neg a$, we have $a \leq \neg \neg a \leq {\sim} \neg a$.\\
\noindent (8): From (6), $\neg a \wedge {\sim} a = \neg a$.  Using {\it pBa} property $a \wedge \neg a = 0$, we have $(\neg {\sim} a \wedge {\sim} a)= 0$, i.e. $(\neg {\sim} a \wedge {\sim} a) \wedge \neg a = 0 \leq {\sim} 1$. Recall the {\it rpc} property $a \wedge c \leq b  \Leftrightarrow c \leq a \rightarrow b$. So, $\neg {\sim} a \wedge ({\sim} a \wedge \neg a)  \leq {\sim} 1 \Rightarrow \neg {\sim} a \leq ({\sim} a \wedge \neg a) \rightarrow {\sim} 1 = {\sim} ({\sim} a \wedge \neg a) = {\sim} (\neg a)$.\\
\noindent (9): From (5) and {\it cc} lattice property $\neg a =\neg \neg \neg a$, we have $\neg \neg {\sim} a = \neg \neg \neg (a \wedge \neg {\sim} 1) =\neg (a \wedge \neg {\sim}1) = {\sim} a$.\\
\noindent (10): From (7), $a \leq {\sim} \neg a \leq \neg \neg ({\sim} \neg a)$. So, $a \rightarrow \neg \neg {\sim} \neg a =1 \Rightarrow \neg \neg \neg {\sim} \neg a \rightarrow \neg a = 1 \Rightarrow \neg {\sim} \neg a \rightarrow \neg a = 1 \Rightarrow \neg {\sim} \neg a \leq \neg a$.
\end{proof}

\begin{obs} {\rm Using Observation \ref{obs-ccpba}, the property ${\sim}{\sim}({\sim} 1 \rightarrow a) = 1$ in Proposition \ref{prop1-ccpBa}(3) can also be expressed as:\\
\centerline{$(({\sim} 1 \rightarrow a) \rightarrow {\sim} 1) \rightarrow {\sim} 1 = 1,$}
which is a special case of {\it Peirce's law} \cite{Segerberg1968} $((b \rightarrow a) \rightarrow b) \rightarrow b = 1$, for $b={\sim} 1$.}
\end{obs}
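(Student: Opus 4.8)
The plan is to unfold the double negation ${\sim}{\sim}({\sim} 1 \rightarrow a)$ using the reformulation of the operation ${\sim}$ recorded in Observation \ref{obs-ccpba}. Recall from Observation \ref{obs-ccpba}(1) that, in any {\it ccpBa}, ${\sim} x = x \rightarrow {\sim} 1$ for every $x \in A$. First I would apply this identity with $x := {\sim} 1 \rightarrow a$, obtaining ${\sim}({\sim} 1 \rightarrow a) = ({\sim} 1 \rightarrow a) \rightarrow {\sim} 1$. Applying it a second time with $x := ({\sim} 1 \rightarrow a) \rightarrow {\sim} 1$ then yields
\[
{\sim}{\sim}({\sim} 1 \rightarrow a) \;=\; \left( ({\sim} 1 \rightarrow a) \rightarrow {\sim} 1 \right) \rightarrow {\sim} 1 .
\]
Consequently the equation ${\sim}{\sim}({\sim} 1 \rightarrow a) = 1$ of Proposition \ref{prop1-ccpBa}(3) is, verbatim, the equation $\left( ({\sim} 1 \rightarrow a) \rightarrow {\sim} 1 \right) \rightarrow {\sim} 1 = 1$.

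For the remaining assertion, it suffices to substitute $b := {\sim} 1$ into Peirce's law $((b \rightarrow a) \rightarrow b) \rightarrow b = 1$: the left-hand side becomes precisely $\left( ({\sim} 1 \rightarrow a) \rightarrow {\sim} 1 \right) \rightarrow {\sim} 1$, which by the previous paragraph equals ${\sim}{\sim}({\sim} 1 \rightarrow a)$, and this is $1$ by Proposition \ref{prop1-ccpBa}(3). Thus the displayed identity is exactly the instance $b = {\sim} 1$ of Peirce's law.

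There is no genuine difficulty here; the argument is a two-step rewriting. The only point requiring a little care is to invoke the \emph{equivalent} form ${\sim} x = x \rightarrow {\sim} 1$ from Observation \ref{obs-ccpba}(1), rather than the defining form ${\sim} a = a \rightarrow (\neg \neg {\sim} 1)$ of Definition \ref{ccpBa}, so that the element ${\sim} 1$ itself (and not $\neg \neg {\sim} 1$) appears on the right of the implications; the legitimacy of this substitution is exactly Observation \ref{obs-ccpba}(2), i.e. the involutivity ${\sim} 1 = \neg \neg {\sim} 1$.
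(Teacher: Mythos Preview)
Your proposal is correct and is exactly the straightforward unfolding the paper has in mind: the Observation is stated without a separate proof, simply citing Observation \ref{obs-ccpba}, and your two applications of ${\sim} x = x \rightarrow {\sim} 1$ together with the substitution $b={\sim} 1$ in Peirce's law are precisely what that citation encodes.
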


\subsection{Comparison with other algebras}
\label{sec-comparison}

Based on the properties of {\it cc} lattices and Proposition \ref{prop1-ccpBa}, the following is straight-forward.

\begin{proposition}
\label{prop3-ccpBa}
$ $\\
Consider  $\mathcal{A}:=(A,1,0,\vee,\wedge,\rightarrow,\neg, \sim)$ such that the reduct $(A,1,0,\vee,\wedge,\rightarrow,\neg)$ is a {\it pBa}. Then the following are equivalent.
\begin{enumerate}[label={{\rm (\arabic*)}}, leftmargin=1.5 \parindent, noitemsep, topsep=0pt]
\item $\mathcal{A}$ is a $ccpBa$.
\item $(A,1,\vee,\wedge,\rightarrow,\sim)$ is a {\it cc} lattice and $\neg \neg {\sim} 1={\sim} 1$.
\item $(A,1,\vee,\wedge,\rightarrow,\sim)$ is a {\it cc} lattice and  ${\sim}{\sim}({\sim} 1 \rightarrow a) = 1$ for any $a \in A$.
\end{enumerate}
\end{proposition}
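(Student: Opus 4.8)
The plan is to establish the cycle $(1)\Leftrightarrow(2)$ and $(2)\Leftrightarrow(3)$, throughout exploiting that the standing hypothesis ``$(A,1,0,\vee,\wedge,\rightarrow,\neg)$ is a {\it pBa}'' already supplies the bounded {\it rpc} structure together with the identity $\neg a = a\rightarrow 0$. Consequently, by Definition \ref{ccpBa} the entire content of ``$\mathcal{A}$ is a {\it ccpBa}'' is the single identity ${\sim}a = a\rightarrow(\neg\neg{\sim}1)$ for all $a\in A$.

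For $(1)\Leftrightarrow(2)$ I would invoke Observation \ref{obs-ccpba} to split the identity ${\sim}a = a\rightarrow(\neg\neg{\sim}1)$ (for all $a$) into the conjunction of (i) ${\sim}a = a\rightarrow{\sim}1$ for all $a$, and (ii) ${\sim}1 = \neg\neg{\sim}1$. As noted in the discussion following Observation \ref{obs-ccpba}, clause (i) together with the {\it rpc} reduct is exactly the statement that $(A,1,\vee,\wedge,\rightarrow,{\sim})$ is a {\it cc} lattice, while clause (ii) is the second half of condition (2). Hence (1) and (2) are literally the same pair of conditions. For $(2)\Rightarrow(3)$, since (2) entails (1), $\mathcal{A}$ is a {\it ccpBa}, so Proposition \ref{prop1-ccpBa}(3) yields ${\sim}{\sim}({\sim}1\rightarrow a)=1$ for all $a$; the {\it cc} lattice clause is common to (2) and (3), so (3) holds.

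The one step requiring an idea is $(3)\Rightarrow(2)$: here we are given only that $(A,1,\vee,\wedge,\rightarrow,{\sim})$ is a {\it cc} lattice and that ${\sim}{\sim}({\sim}1\rightarrow a)=1$ for all $a$, and we must recover $\neg\neg{\sim}1={\sim}1$. The key move is to specialise $a=0$. Then ${\sim}1\rightarrow 0=\neg{\sim}1$, and expanding ${\sim}$ twice via the {\it cc} identity ${\sim}b=b\rightarrow{\sim}1$ turns the hypothesis into $(\neg{\sim}1\rightarrow{\sim}1)\rightarrow{\sim}1=1$, i.e.\ $\neg{\sim}1\rightarrow{\sim}1\leq{\sim}1$ by the {\it rpc} law $x\rightarrow y=1\Leftrightarrow x\leq y$. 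Since $\neg\neg{\sim}1\wedge\neg{\sim}1=0\leq{\sim}1$ in the {\it pBa}, the {\it rpc} law $x\wedge y\leq z\Leftrightarrow x\leq y\rightarrow z$ gives $\neg\neg{\sim}1\leq\neg{\sim}1\rightarrow{\sim}1\leq{\sim}1$; combined with the always-valid ${\sim}1\leq\neg\neg{\sim}1$, this forces $\neg\neg{\sim}1={\sim}1$, which is the missing clause of (2).

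I expect the only genuine obstacle to be this last implication, specifically recognising that evaluating the Peirce-type identity ${\sim}{\sim}({\sim}1\rightarrow a)=1$ at $a=0$ and rewriting $\neg{\sim}1={\sim}1\rightarrow 0$ is precisely what converts it into the double-negation condition; the remainder is routine bookkeeping with Definition \ref{ccpBa}, Observation \ref{obs-ccpba}, and standard {\it pBa}/{\it cc}/{\it rpc} lattice identities.
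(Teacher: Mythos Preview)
Your argument is correct and matches the paper's intended approach: the paper simply declares the proposition ``straight-forward'' from properties of {\it cc} lattices and Proposition~\ref{prop1-ccpBa}, and your write-up is precisely the natural unpacking of that claim. In particular, your handling of $(3)\Rightarrow(2)$ by specialising $a=0$ to obtain $\neg{\sim}1\rightarrow{\sim}1\leq{\sim}1$ and then using $\neg\neg{\sim}1\wedge\neg{\sim}1=0$ is the only step requiring any thought, and it is sound.
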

\noindent Note that (3) in Proposition \ref{prop3-ccpBa} does not have any occurrence of $\neg$. The following example shows that the condition in the statement of the proposition that the reduct $(A,1,0,\vee,\wedge,\rightarrow,\neg)$ is a {\it pBa}, is necessary for $\mathcal{A}:=(A,1,0,\vee,\wedge,\rightarrow,\neg, \sim)$  to be a $ccpBa$.

\begin{example}
\label{eg4-ccpBa}
{\rm Consider the linear lattice $(L,0,\vee,\wedge)$, where $L$ consists of all negative integers including 0.
Define an implication operator (cf. \cite{gupta1990}) as
\begin{align*}
a \rightarrow b &:= b \quad \mbox{ if } b < a,\\
&:= 0 \quad \mbox{ otherwise.}
\end{align*}
Here, $(L,0,\vee,\wedge,\rightarrow)$ is an {\it rpc} lattice. Define ${\sim}$ as ${\sim} a := a \rightarrow 0$.
Thus, trivially, $(L,0,\vee,\wedge,\rightarrow,{\sim})$
satisfies condition (3) of Proposition \ref{prop3-ccpBa}. However, $(L, \leq)$ does not have a lower bound. So, $(L,0,\vee,\wedge,\rightarrow)$ cannot be extended to a {\it pBa}, and hence the {\it cc} lattice $(L,0,\vee,\wedge,\rightarrow,{\sim})$ cannot be extended to a {\it ccpBa}.}
\end{example}

A familiar lattice with two distinct negation operators  and having the same algebraic type as a {\it ccpBa}, is the \textit{quasi-pseudo Boolean algebra}, also called {\it Nelson algebra} \cite{HR}. 
We observe the following differences between Nelson algebras and {\it ccpBa}s.\\
(1) The implication $\rightarrow$ is a relative pseudo-complement operator in a ${\it ccpBa}$, which may not be true for $\rightarrow$ in a Nelson algebra.\\
(2) The negation ${\sim}$ satisfies the involution property ${\sim}{\sim}a =a$ for all $a \in A$ in a Nelson algebra $\mathcal{A} := (A,1,0,\vee,\wedge,\rightarrow,\neg,{\sim})$. This may not be true in an arbitrary {\it ccpBa}, neither of the two negations in a {\it ccpBa} may satisfy this property.\\
The above points (1) and (2) may easily be verified using the $3$-element Nelson algebra $\mathcal{A}:=(A,1,0,\vee,\wedge,\rightarrow,\neg,{\sim})$ from \cite{HR}, where $A:=\{0,a,1\}$ with the ordering as $0 \leq a \leq 1$. The operators $\rightarrow$, $\neg$ and ${\sim}$ are defined in Table \ref{3nelson}.  In \cite{HR}, it has been observed that every $3$-element Nelson algebra is isomorphic to $\mathcal{A}$.

\begin{table}[ht]
\begin{tabular}{ccc}
    \begin{minipage}{.4\linewidth}
        \centering
        \begin{tabular}{ll}
            \begin{tabular}{ |c|c|c|c| } 
			\hline
			$\rightarrow$ & $0$ & $a$ & $1$ \\ \hline
			$0$ & $1$ & $1$ & $1$ \\ \hline
			$a$ & $1$ & $1$ & $1$ \\ \hline
			$1$ & $0$ & $a$ & $1$ \\ \hline
			\end{tabular}
        \end{tabular}
    \end{minipage} &

    \begin{minipage}{.2\linewidth}
        \centering
        \begin{tabular}{ll}
            \begin{tabular}{ |c|c| } 
			\hline
			$x$ & $\neg x$ \\ \hline
			$0$ & $1$ \\ \hline
			$a$ & $1$ \\ \hline
			$1$ & $0$ \\ \hline
			\end{tabular}
        \end{tabular}
    \end{minipage} &
    
    \begin{minipage}{.2\linewidth}
        \centering
        \begin{tabular}{ll}
            \begin{tabular}{ |c|c| } 
			\hline
			$x$ & ${\sim} x$ \\ \hline
			$0$ & $1$ \\ \hline
			$a$ & $a$ \\ \hline
			$1$ & $0$ \\ \hline
			\end{tabular}
        \end{tabular}
    \end{minipage}
\end{tabular}
\caption{$3$-element Nelson algebra}
\label{3nelson}
\end{table}

\noindent Let us establish the claims made in (1) and (2) above.\\
(1): In $\mathcal{A}$, $a \leq a \rightarrow 0 = 1$ holds, but $a = a \wedge a \nleq 0$. Thus the operator $\rightarrow$ is not a relative pseudo-complement in $\mathcal{A}$, and $\mathcal{A}$ is not a ${\it ccpBa}$. \\
(2): We shall see in the next section that there are only two $3$-element {\it ccpBa}s upto isomorphism. However, none of the negations in these is involutive. So, no $3$-element {\it ccpBa} forms a Nelson algebra.\\
Thus, we can conclude that neither of the two algebraic classes of {\it ccpBa}s and Nelson algebras is a sub-class of the other.

\subsection{Examples of \textit{ccpBa}s and \textit{c\texorpdfstring{${\vee}$}cpBa}s}
\label{sec-eg-ccpba}

It was mentioned in Section \ref{intro} that the set of strong subobjects of any ROUGH-object $(\mathbb{U},R,U)$ gives rise to a {\it ccpBa} (in fact, a {\it c${\vee}$cpBa})
\cite{AM2}. It is  shown in \cite{AM,AM2} that on abstraction of the category-theoretic construction, entire classes of examples of {\it ccpBa}s and {\it c${\vee}$cpBa}s can  be obtained by starting from an arbitrary pseudo-Boolean or Boolean algebra. Let us recall the construction and results from \cite{AM2}. Consider a {\it pBa} $\mathcal{H}:=(H, 1,0,\vee, \wedge, \rightarrow, \neg)$, and  the set $\mathcal{H}^{[2]}:=\{(a,b): \ a \leq b, \ a,b \in H\}$ (cf. \cite{CR}). Fix any element $u:=(u_1,u_2) \in \mathcal{H}^{[2]}$. Define the following set $A_{u}$ and operators on it:
\begin{align*}
A_{u}:=\{(a_1,a_2) \in \mathcal{H}^{[2]} \ &: a_2 \leq u_2 ~{\rm and}~\ a_1 =a_2 \wedge u_1 \}, \\[5pt]
\sqcup:~(a_1,a_2) \sqcup (b_1,b_2) &:= (a_1 \vee b_1, a_2 \vee b_2),\\
\sqcap:~(a_1,a_2) \sqcap (b_1,b_2) &:= (a_1 \wedge b_1, a_2 \wedge b_2),\\
\rightarrow:~(a_1,a_2) \rightarrow (b_1,b_2) &:= ((a_1 \rightarrow b_1) \wedge u_1, (a_2 \rightarrow b_2) \wedge u_2),\\
\sim:~{\sim} (a_1,a_2) &:= (u_1 \wedge \neg a_1, u_2 \wedge \neg a_1), \mbox{ and}\\
\neg:~\neg (a_1,a_2) &:= (a_1,a_2) \rightarrow (0,0).
\end{align*}

\begin{proposition}\label{propAu}
{\rm \cite{AM2}} $\mathcal{A}_u:=(A_{u},(u_1,u_2),(0,0), \sqcap,\sqcup,\rightarrow,\neg, \sim)$ forms a {\it ccpBa}. Moreover, if $\mathcal{H}$ is a Boolean algebra, then $\mathcal{A}_{u}$ forms a {\it c${\vee}$cpBa}.
\end{proposition}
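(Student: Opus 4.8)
The plan is to verify the \textit{ccpBa} axioms directly, exploiting the componentwise nature of the operations on $A_u$ and the fact that $x \mapsto x\wedge u_1$ is a \textit{pBa}-homomorphism from $\mathcal{H}$ onto the interval sub-algebra $[0,u_1]$. Concretely I would proceed in four stages: (i) show $A_u$ is closed under $\sqcup,\sqcap,\rightarrow,{\sim},\neg$; (ii) show $(A_u,(u_1,u_2),(0,0),\sqcup,\sqcap,\rightarrow)$ is a bounded \textit{rpc} lattice; (iii) verify $\neg a = a\rightarrow(0,0)$ together with the two clauses of Observation~\ref{obs-ccpba}; (iv) treat the Boolean case. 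Using Proposition~\ref{prop3-ccpBa}, stage (iii) may be repackaged as: the $\neg$-reduct is a \textit{pBa}, the ${\sim}$-reduct is a \textit{cc} lattice, and $\neg\neg{\sim}1 = {\sim}1$.

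For closure, the only non-routine coordinate is the second, where the defining constraints of $A_u$ are $a_2\le u_2$ and $a_1 = a_2\wedge u_1$. Meets and joins are handled by distributivity of $\mathcal{H}$ (so $(a_2\vee b_2)\wedge u_1 = (a_2\wedge u_1)\vee(b_2\wedge u_1)$, etc.), and closure under $\rightarrow$, hence under $\neg$ since $(0,0)\in A_u$, follows from the identity $u_1\wedge\big((a_2\wedge u_1)\rightarrow(b_2\wedge u_1)\big) = u_1\wedge(a_2\rightarrow b_2)$, i.e. from $x\mapsto x\wedge u_1$ being a \textit{pBa}-homomorphism onto $[0,u_1]$. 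Closure under ${\sim}$ is immediate, since $u_1\wedge\neg a_1 = (u_2\wedge\neg a_1)\wedge u_1$ and $u_2\wedge\neg a_1\le u_2$. The bounded lattice structure under the componentwise order is then clear, with top $(u_1,u_2)$ (note $u_2\wedge u_1 = u_1$ as $u_1\le u_2$) and bottom $(0,0)$. For the \textit{rpc} property, note that any $(c_1,c_2)\in A_u$ satisfies $c_1 = c_2\wedge u_1\le u_1$ and $c_2\le u_2$, whence $c_1\le(a_1\rightarrow b_1)\wedge u_1 \Leftrightarrow c_1\le a_1\rightarrow b_1 \Leftrightarrow c_1\wedge a_1\le b_1$, and similarly in the second coordinate; so the adjunction $(a_1,a_2)\sqcap(c_1,c_2)\le(b_1,b_2)\Leftrightarrow(c_1,c_2)\le(a_1,a_2)\rightarrow(b_1,b_2)$ reduces coordinatewise to the adjunction in $\mathcal{H}$.

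For the negations, $\neg a = a\rightarrow(0,0)$ holds by definition. Computing ${\sim}1 = {\sim}(u_1,u_2) = (0,\,u_2\wedge\neg u_1)$, I would verify ${\sim}(a_1,a_2) = (a_1,a_2)\rightarrow{\sim}1$ coordinatewise using the Heyting identities $p\rightarrow(q\wedge r) = (p\rightarrow q)\wedge(p\rightarrow r)$, $p\le q\Rightarrow p\rightarrow q = 1$, and $p\rightarrow\neg q = (p\wedge q)\rightarrow 0 = \neg(p\wedge q)$; in the second coordinate these turn $a_2\rightarrow(u_2\wedge\neg u_1)$ into $a_2\rightarrow\neg u_1 = \neg(a_2\wedge u_1) = \neg a_1$, as required. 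It remains to check $\neg\neg{\sim}1 = {\sim}1$: expanding and using $(p\rightarrow q)\wedge p = p\wedge q$ one gets $\neg{\sim}1 = (u_1,\,\neg\neg u_1\wedge u_2)$, and applying $\neg$ once more together with $\neg\neg\neg = \neg$ yields $\neg\neg{\sim}1 = (0,\,\neg u_1\wedge u_2) = {\sim}1$. By Observation~\ref{obs-ccpba} (equivalently Proposition~\ref{prop3-ccpBa}(2)), $\mathcal{A}_u$ is a \textit{ccpBa}. Finally, if $\mathcal{H}$ is Boolean then $p\vee\neg p = 1$ for all $p\in H$; since $a_1\le u_1$, $a_2\le u_2$ and $\neg a_1\ge\neg a_2$ (as $a_1\le a_2$), distributivity gives $a_1\vee(u_1\wedge\neg a_1) = u_1\wedge(a_1\vee\neg a_1) = u_1$ and $a_2\vee(u_2\wedge\neg a_1) = u_2\wedge(a_2\vee\neg a_1) = u_2$, so $(a_1,a_2)\sqcup{\sim}(a_1,a_2) = (u_1,u_2)$, i.e. $\mathcal{A}_u$ is a \textit{c${\vee}$cpBa}.

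The main obstacle is the bookkeeping in stage (iii): the two coordinates are linked by $a_1 = a_2\wedge u_1$ and are separately truncated by $u_1$ and $u_2$, so the negation computations, and especially the verification of $\neg\neg{\sim}1 = {\sim}1$, must be organised carefully. Routing every such computation through the homomorphism $x\mapsto x\wedge u_1$ (which identifies the first coordinate of any member of $A_u$ with the image of its second) is, I expect, the cleanest way to keep the algebra under control and push the calculation through.
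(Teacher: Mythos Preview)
The paper does not actually supply a proof of this proposition; it is quoted from \cite{AM2} without argument. Your direct verification is correct: closure of $A_u$ under the operations, the coordinatewise reduction of the \textit{rpc} adjunction, the computation ${\sim}1=(0,u_2\wedge\neg u_1)$, the check that ${\sim}(a_1,a_2)=(a_1,a_2)\rightarrow{\sim}1$, and the identity $\neg\neg{\sim}1={\sim}1$ all go through exactly as you outline, and the Boolean case is handled cleanly by distributivity together with $\neg a_2\le\neg a_1$. There is nothing in the paper to compare against, but your approach via Observation~\ref{obs-ccpba}/Proposition~\ref{prop3-ccpBa}(2) is the natural one.
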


Consider the 6-element {\it pBa} $\mathcal{H}_6:=(H_6,1,0,\vee,\wedge,\rightarrow,\neg)$, for which the Hasse diagram is given by Figure \ref{fig-6rpcA}.

\begin{figure}[hbt!]
\begin{minipage}[b]{0.50\textwidth}
\centering
\begin{tikzpicture}
		\node (A) at (1, 0) {$0$};
		\node (B) at (0, 1) {$y$};
		\node (C) at (2, 1) {$z$};
		\node (D) at (1, 2) {$w$};
		\node (E) at (3, 2) {$x$};
		\node (F) at (2, 3) {$1$};
		\draw[->]  (A) -- (B);
		\draw[->]  (A) -- (C);
		\draw[->]  (C) -- (E);
		\draw[->]  (B) -- (D);
		\draw[->]  (C) -- (D);
		\draw[->]  (D) -- (F);
		\draw[->]  (E) -- (F);
	\end{tikzpicture}
    \caption{$\mathcal{H}_6$ - a $6$-element {\it pBa}}%
    \label{fig-6rpcA}%
\end{minipage}
\begin{minipage}[b]{0.48\textwidth}
\centering 
\begin{tabular}{ |c|c|c|c|c|c|c| } 
\hline
\hspace{0.3mm} $\rightarrow$ \hspace{0.3mm} & ~$0$~ & ~$y$~ & ~$z$~ & ~$w$~ & ~$x$~ & ~$1$~ \\ \hline
$0$ & $1$ & $1$ & $1$ & $1$ & $1$ & $1$ \\ \hline
$y$ & $x$ & $1$ & $x$ & $1$ & $x$ & $1$ \\ \hline
$z$ & $y$ & $y$ & $1$ & $1$ & $1$ & $1$ \\ \hline
$w$ & $0$ & $y$ & $x$ & $1$ & $x$ & $1$ \\ \hline
$x$ & $y$ & $y$ & $w$ & $w$ & $1$ & $1$ \\ \hline
$1$ & $0$ & $y$ & $z$ & $w$ & $x$ & $1$ \\ \hline
\end{tabular}
\captionof{table}{Implication in $\mathcal{H}_6$}
\label{tab-implA}
\end{minipage}
\end{figure}

\noindent  Define $(u_1,u_2):=(z,w) \in \mathcal{H}_6^{[2]}$. Proposition \ref{propAu} implies that  $\mathcal{A}_u$ forms a ${\it ccpBa}$ (Figure \ref{fig-Au}). Similarly, for other choices of $(u_1,u_2)$, we get different {\it ccpBa}s.

\begin{figure}[hbt!]
  \begin{minipage}[b]{0.48\textwidth}
  \centering
    \begin{tikzpicture}
		\node (A) at (1, 0) {$(0,0)$};
		\node (B) at (0, 1) {$(0,y)$};
		\node (C) at (2, 1) {$(z,z)$};
		\node (D) at (1, 2) {$(z,w)$};
		\draw[->]  (A) -- (B);
		\draw[->]  (A) -- (C);
		\draw[->]  (B) -- (D);
		\draw[->]  (C) -- (D);
	\end{tikzpicture}
    \captionof{figure}{$\mathcal{A}_u$ for $u:=(z,w)$}
    \label{fig-Au}
  \end{minipage}
  \begin{minipage}[b]{0.48\textwidth}
    \centering
  \begin{tabular}{ |c|c| } 
			\hline
			$x$ & ${\sim} x$ \\ \hline
			$(0,0)$ & $(0,y)$ \\ \hline
			$(0,y)$ & $(z,w)$ \\ \hline
			$(z,z)$ & $(0,y)$ \\ \hline
			$(z,w)$ & $(z,w)$ \\ \hline
			\end{tabular}
      \captionof{table}{Negation ${\sim}$ in $\mathcal{A}_u$}
      \label{obs3-table}
    \end{minipage}
  \end{figure}

\noindent Proposition \ref{propAu} also mentions that if $\mathcal{H}$ is a Boolean algebra, then $\mathcal{A}_u$ is a {\it c${\vee}$cpBa}. However, the converse is not always true, i.e. if $\mathcal{A}_u$ is a {\it c${\vee}$cpBa}, then $\mathcal{H}$ need not always be a Boolean algebra. This can be observed from Table \ref{obs3-table} for the above example: $\mathcal{A}_u$ is a {\it c${\vee}$cpBa}, however $\mathcal{H}_6$ is not a Boolean algebra.
\vskip 3pt

Using Definition \ref{ccpBa} and Observation \ref{obs-ccpba}, examples of {\it ccpBa} can also be obtained as follows: in any {\it pBa}  $\mathcal{A}:=(A,1,0, \vee, \wedge, \rightarrow,\neg)$, choose an element `${\sim} 1$' in $\mathcal{A}$ such that ${\sim} 1 = \neg \neg {\sim} 1$, and define ${\sim}a$ as ${\sim} a := a\rightarrow {\sim} 1$. This results in a {\it ccpBa} $\mathcal{A}:=(A,1,0, \vee, \wedge, \rightarrow,\neg,{\sim} )$.
Let us see some examples of finite {\it ccpBa}s and {\it c${\vee}$cpBa}s obtained in this manner.
\vskip 3pt

\noindent (A) Consider the only $3$-element {\it pBa} $(A,1,0,\vee,\wedge,\rightarrow_1,\neg_1)$ upto isomorphism, where $A := \{0,a,1\}$ with ordering $0 \leq a \leq 1$, $\rightarrow_1$ and $\neg_1$ defined as in Tables \ref{3rpc-imp} and \ref{3pba-neg} respectively. There are two choices for ${\sim} 1$ such that ${\sim} 1 = \neg_1 \neg_1 {\sim} 1$, namely the elements $0$ and $1$.

\begin{enumerate}[noitemsep]
\item ${\sim} 1 := 0$. This means ${\sim} x = \neg_1 x$ (Table \ref{3pba-neg}). The resulting algebra $\mathcal{A}':=(A,1,0,\vee,\wedge,\rightarrow_1,\neg_1,\neg_1)$ is a $ccpBa$, but not a $c{\vee}cpBa$. 
\item ${\sim} 1 := 1$, mapping each element of $A$ to 1 (Table \ref{3ccpba-neg}). The resulting algebra  $\mathcal{B}':=(A,1,0,\vee,\wedge,\rightarrow_1,\neg_1,{\sim}_1)$ is a {\it c${\vee}$cpBa}.
\end{enumerate}
Since $(A,1,0,\vee,\wedge,\rightarrow_1,\neg_1)$ is the only $3$-element {\it pBa} upto isomorphism, $\mathcal{A}'$ and $\mathcal{B}'$ are the only $3$-element {\it ccpBa}'s upto isomorphism.

\begin{figure}[hbt!]
  \begin{minipage}[b]{0.35\textwidth}
    \centering
    \begin{tabular}{ |c|c|c|c| } 
	\hline
	$\rightarrow_1$ & $0$ & $a$ & $1$ \\ \hline
	$0$ & $1$ & $1$ & $1$ \\ \hline
	$a$ & $0$ & $1$ & $1$ \\ \hline
	$1$ & $0$ & $a$ & $1$ \\ \hline
	\end{tabular}
    \captionof{table}{Implication $\rightarrow_1$}
	\label{3rpc-imp}
  \end{minipage}
  \begin{minipage}[b]{0.28\textwidth}
  \centering
    \begin{tabular}{ |c|c| } 
	\hline
	$x$ & $\neg_1 x$ \\ \hline
	$0$ & $1$ \\ \hline
	$a$ & $0$ \\ \hline
	$1$ & $0$ \\ \hline
	\end{tabular}
	\captionof{table}{Negation $\neg_1$}
	\label{3pba-neg}
  \end{minipage}
  \begin{minipage}[b]{0.32\textwidth}
  \centering
	\begin{tabular}{ |c|c| } 
	\hline
	$x$ & ${\sim}_1 x$ \\ \hline
	$0$ & $1$ \\ \hline
	$a$ & $1$ \\ \hline
	$1$ & $1$ \\ \hline
	\end{tabular}
	\captionof{table}{Negation ${\sim}_1$}
	\label{3ccpba-neg}
  \end{minipage}
\label{3ccpBa}
\end{figure}

\noindent (B) Consider the 5-element {\it pBa} $\mathcal{H}_5 :=(H_5,1,0,\vee,\wedge,\rightarrow,\neg)$ (Figure \ref{fig-5rpcA}). There are four choices for ${\sim} 1$ such that $\neg \neg {\sim} 1 ={\sim} 1$  -- these are the elements $0,1,a$, and $b$.

\begin{figure}[hbt!]
  \begin{minipage}[b]{0.50\textwidth}
  \centering
    \centering
    \begin{tikzpicture}
		\node (A) at (1, 0) {$0$};
		\node (B) at (0, 1) {$a$};
		\node (C) at (2, 1) {$b$};
		\node (D) at (1, 2) {$e$};
		\node (F) at (1, 3) {$1$};
		\draw[->]  (A) -- (B);
		\draw[->]  (A) -- (C);
		\draw[->]  (B) -- (D);
		\draw[->]  (C) -- (D);
		\draw[->]  (D) -- (F);
	\end{tikzpicture}
    \captionof{figure}{$\mathcal{H}_5$}
    \label{fig-5rpcA}
  \end{minipage}
  \begin{minipage}[b]{0.48\textwidth}
  \centering 
  \begin{tabular}{ |c|c|c|c|c|c| } 
\hline
\hspace{1mm} $\rightarrow$ \hspace{1mm} & \hspace{0.3mm} $0$ \hspace{0.3mm} & \hspace{0.3mm} $a$ \hspace{0.3mm} & \hspace{0.3mm} $b$ \hspace{0.3mm} & \hspace{0.3mm} $e$ \hspace{0.3mm} & \hspace{0.3mm} $1$ \hspace{0.3mm} \\ \hline
$0$ & $1$ & $1$ & $1$ & $1$ & $1$ \\ \hline
$a$ & $b$ & $1$ & $b$ & $1$ & $1$ \\ \hline
$b$ & $a$ & $a$ & $1$ & $1$ & $1$ \\ \hline
$e$ & $0$ & $a$ & $b$ & $1$ & $1$ \\ \hline
$1$ & $0$ & $a$ & $b$ & $e$ & $1$ \\ \hline
  \end{tabular}
      \captionof{table}{Implication in $\mathcal{H}_5$}
      \label{tab-impl2}
    \end{minipage}
\end{figure}
\begin{enumerate}[noitemsep]
\item ${\sim} 1 := 0$, i.e. ${\sim} x = \neg x$. The resulting algebra $(H_5, 1, 0, \vee,\wedge, \rightarrow ,\neg, \neg)$ is a $ccpBa$, but not a $c{\vee}cpBa$ because $a \vee {\sim} a \neq 1$.
\item ${\sim} 1 := a$. The resulting algebra $(H_5, 1, 0, \vee,\wedge, \rightarrow ,\neg, {\sim})$ is a $ccpBa$, but again not a $c{\vee}cpBa$ because $b \vee {\sim} b \neq 1$.
\item ${\sim} 1 := b$. This case is the same as that for ${\sim} 1 := a$, the resulting algebra is a $ccpBa$, but not a $c{\vee}cpBa$.
\item ${\sim} 1 := 1$. The resulting algebra $(H_5, 1, 0, \vee,\wedge, \rightarrow ,\neg, {\sim})$ is  a 
$c{\vee}cpBa$.
\end{enumerate}
\vskip 3pt
\noindent (C) For the $6$-element {\it pBa} $\mathcal{H}_6 :=(H_6,1,0,\vee,\wedge,\rightarrow,\neg)$ (Figure \ref{fig-6rpcA}), the available choices for ${\sim} 1$ are the elements $0,1,x$, and $y$.
\begin{enumerate}[noitemsep]
\item ${\sim} 1 := 0$, i.e. ${\sim} a = \neg a$. The resulting algebra $(H_6, 1, 0, \vee,\wedge, \rightarrow ,\neg, \neg)$ is a $ccpBa$, but not a $c{\vee}cpBa$ because $z \vee {\sim} z \neq 1$.
\item ${\sim} 1 := y$. The resulting algebra $(H_6, 1, 0,\vee, \wedge,\rightarrow ,\neg, {\sim})$ is a $ccpBa$, but again not a $c{\vee}cpBa$ because $z \vee {\sim} z \neq 1$.
\item ${\sim} 1 := x$. The resulting algebra $(H_6, 1, 0,\vee,\wedge,\rightarrow ,\neg, \sim)$ is a $c{\vee}cpBa$.
\item ${\sim} 1 := 1$. The resulting algebra $(H_6, 1, 0,\vee, \wedge, \rightarrow ,\neg, {\sim})$ is again a 
$c{\vee}cpBa$.
\end{enumerate}

\subsection{Representation theorems for the algebras}
\label{representations}


The representation result for {\it pBa}s in terms of pseudo-fields of open subsets of a topological space \cite{HR}, can be directly extended to  that for {\it ccpBa}s \cite{AM2}. `Contrapositionally complemented pseudo-fields' are defined for the purpose. We recall the definition and state the result.

\begin{definition}[Contrapositionally complemented pseudo-fields]$ $\\
Let $\mathscr{G}(X): = (\mathcal{G}(X), X ,\emptyset, \cap, \cup, \rightarrow,\neg)$ be a pseudo-field of open subsets of a topological space $X$. Choose and fix $Y_0 \in \mathcal{G}(X)$. Define 
\begin{align*}
{\sim} X := \neg \neg Y_0, \quad \mbox{and} \quad
{\sim} Z := Z \rightarrow (\neg \neg {\sim} X), \mbox{ for each } Z \in \mathcal{G}(X).
\end{align*}
Then the algebra $(\mathcal{G}(X), X ,\emptyset, \cap, \cup, \rightarrow, \neg,\sim)$ is called  a \textit{contrapositionally complemented pseudo-field (${\it cc}$ pseudo-field)} of open subsets of $X$.
\end{definition}

\begin{proposition}
Any ${\it cc}$ pseudo-field of open subsets of a topological space $X$ forms a $ccpBa$. Moreover, for every {\it ccpBa} $\mathcal{A}:=(A,1,0, \cap, \cup, \rightarrow,\neg, {\sim} )$, there exists a monomorphism $h$ from $\mathcal{A}$ into a {\it cc} pseudo-field of all open subsets of a topological space $X$.
\end{proposition}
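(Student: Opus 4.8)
The plan is to piggyback on the classical Stone–Tarski representation theorem for pseudo-Boolean algebras, which is available since the reduct $(A,1,0,\cap,\cup,\rightarrow,\neg)$ of any \textit{ccpBa} is a \textit{pBa}. First I would show that a \textit{cc} pseudo-field of open subsets of a topological space $X$ is a \textit{ccpBa}. The underlying pseudo-field $\mathscr{G}(X)$ is already a \textit{pBa} by the standard theory, so by Definition \ref{ccpBa} (together with Observation \ref{obs-ccpba}) it suffices to check that ${\sim} X = \neg\neg {\sim} X$ and that ${\sim} Z = Z \rightarrow {\sim} X$ for every $Z \in \mathcal{G}(X)$. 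The second identity is exactly how ${\sim} Z$ was defined (since $\neg\neg{\sim} X = {\sim} X$ once the first identity is known), and the first identity holds because ${\sim} X$ was set equal to $\neg\neg Y_0$ and $\neg\neg\neg\neg Y_0 = \neg\neg Y_0$ is a \textit{pBa} identity ($\neg a = \neg\neg\neg a$ applied to $\neg Y_0$). So the first half is essentially immediate from the definitions.

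For the converse — embedding an arbitrary \textit{ccpBa} $\mathcal{A} := (A,1,0,\cap,\cup,\rightarrow,\neg,{\sim})$ into a \textit{cc} pseudo-field — I would take the \textit{pBa} representation of the reduct $(A,1,0,\cap,\cup,\rightarrow,\neg)$ as the starting point. That gives a topological space $X$ and a monomorphism $h$ of \textit{pBa}s from the reduct into the pseudo-field $\mathscr{G}(X)$ of all open subsets of $X$, preserving $1,0,\cap,\cup,\rightarrow,\neg$. The key move is then to designate the fixed parameter $Y_0 \in \mathcal{G}(X)$ appropriately: set $Y_0 := h({\sim} 1)$. This turns $\mathscr{G}(X)$ into a \textit{cc} pseudo-field, with its induced minimal negation, call it ${\sim}_X$, given by ${\sim}_X Z := Z \rightarrow (\neg\neg {\sim}_X X)$ where ${\sim}_X X := \neg\neg Y_0 = \neg\neg h({\sim} 1)$. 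Since $h$ is a \textit{pBa} homomorphism and ${\sim} 1 = \neg\neg {\sim} 1$ holds in $\mathcal{A}$ by Observation \ref{obs-ccpba}(2), we get $\neg\neg h({\sim} 1) = h(\neg\neg {\sim} 1) = h({\sim} 1)$, so ${\sim}_X X = h({\sim} 1)$.

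It then remains to verify that the same $h$ respects the new operation ${\sim}$, i.e. $h({\sim} a) = {\sim}_X h(a)$ for all $a \in A$. Using Observation \ref{obs-ccpba}(1), ${\sim} a = a \rightarrow {\sim} 1$ in $\mathcal{A}$, while ${\sim}_X h(a) = h(a) \rightarrow {\sim}_X X = h(a) \rightarrow h({\sim} 1)$ in $\mathscr{G}(X)$; since $h$ preserves $\rightarrow$, both sides equal $h(a \rightarrow {\sim} 1)$, so they agree. Thus $h$ is a monomorphism of the full signature into the \textit{cc} pseudo-field $(\mathcal{G}(X), X, \emptyset, \cap, \cup, \rightarrow, \neg, {\sim}_X)$, which completes the proof. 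I do not anticipate a genuine obstacle here: the whole argument is a bootstrap off the known \textit{pBa} representation, and the only point requiring care is to make the choice $Y_0 := h({\sim} 1)$ and to invoke the involutivity condition ${\sim} 1 = \neg\neg {\sim} 1$ at the right moment so that the induced ${\sim}_X$ on the image matches $h \circ {\sim}$; the injectivity of $h$ is inherited verbatim from the \textit{pBa} case.
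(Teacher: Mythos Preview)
Your proposal is correct and follows exactly the approach the paper indicates: the proposition is stated without a detailed proof, with the remark that the representation result for \textit{pBa}s ``can be directly extended'' to \textit{ccpBa}s, and your argument carries out precisely that extension by invoking the \textit{pBa} representation for the reduct, choosing $Y_0 := h({\sim}1)$, and using the identity ${\sim}1 = \neg\neg{\sim}1$ to verify that $h$ also preserves ${\sim}$.
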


In this work, we turn to  representations of $ccpBa$s and $c{\vee}cpBa$s  in terms of Priestley and Esakia spaces. 
Let us recall the basic definitions. 
\vskip 3pt
Consider a poset $(X, \leq)$. $Y( \subseteq X)$ is called an {\it upset} ({\it downset}) if for all $x \in Y$ and $y \in X$, $x \leq y$ ($y \leq x$) implies $y \in Y$. Let $Up(X)$ denote the set of all upsets of $X$.
For  $Z (\subseteq X)$,\\
\centerline{$\uparrow Z :=\{x \in X~|~ \mbox{ there exists } y \in Z \mbox{ satisfying } y \leq x \}$, \mbox{ and }}
\centerline{$\downarrow Z :=\{x \in X~|~ \mbox{ there exists } y \in Z \mbox{ satisfying } x \leq y \}$}
are the {\it upset and downset generated by $Z$} respectively. 

%

\begin{definition}[Priestley and Esakia spaces] {\rm (cf. \cite{davey2002,celani2014})}
\label{priestley} \index{Priestley space} \index{Esakia space} A {\it Priestley space} is a tuple $(X,\tau,\leq)$, where $(X,\leq)$ is a poset and $\tau ~(\neq \emptyset)$ is a compact topological space on $X$ satisfying the following property: for every $x,y \in X$, if $x \nleq y$, then there exists a {\rm clopen}  (closed, as well as open) upset $Y$ of $X$ such that $x \in Y$ and $y \notin Y$.\\
Additionally, if a Priestley space $(X,\tau,\leq)$ satisfies the property that for any $U \subseteq X$, $U$ is clopen implies $\downarrow U$ is clopen, then it is called an {\it Esakia space}. 
\end{definition}

Let $CpUp(X)$ be the set of clopen upsets of $\tau$ in a Priestley space $(X,\tau,\leq)$. Then $\mathcal{D}(X):=(CpUp(X),X,\emptyset,\cup, \cap)$ forms a bounded distributive lattice. Define the operator $\rightarrow$ on $CpUp(X)$, for any $U,V \in CpUp(X)$:
\begin{align}
\label{eq-ra}
U \rightarrow V:= X {\setminus}  \downarrow (U {\setminus} V).
\end{align}
The operator $\rightarrow$ is closed in $CpUp(X)$ if and only if $(X,\tau,\leq)$ is an Esakia space (cf. \cite{celani2014}). In this case, $\mathcal{D}(X):=(CpUp(X),X,\emptyset,\cup,\cap,\rightarrow,\neg)$ forms a {\it pBa}, where $\neg U:=U \rightarrow \emptyset$.\\
Now consider a {\it pBa} $\mathcal{A}:=(A,1,0,\vee,\wedge,\rightarrow,\neg)$, and let $X_{A}$ denote the set of prime filters in $\mathcal{A}$. Define the topology $\tau_{\mathcal{A}}$ on $X_{A}$ generated by the subbasis\\
\centerline{$\{\sigma(a)~|~ \sigma(a) \subseteq X_{A}~ \&~ a \in A\} \cup \{X_A {\setminus} \sigma(a)~|~ \sigma(a) \subseteq X_{A}~ \&~  a \in A\},$}
where $\sigma(a)$ is the set of prime filters containing $a \in A$. Then $(X_{A},\tau_{\mathcal{A}},\subseteq)$ forms an Esakia space. For $a \in A$, $\sigma(a)$ and $X_{A} {\setminus} \sigma(a)$ are the only clopen upsets in $\tau_{\mathcal{A}}$. These definitions lead to the following representation result for {\it pBa}s (cf. \cite{nick2006}).

\begin{theorem}[Duality for \textit{pBa}s] {\rm \cite{nick2006}}
\label{dual-pba}
\begin{enumerate}[label={{\rm (\arabic*)}}, leftmargin=1.4 \parindent, noitemsep, topsep=0pt]
\item Given any {\it pBa} $\mathcal{A}:=(A,1,0,\vee,\wedge,\rightarrow,\neg)$, there exist an Esakia space $(X_{A},\tau_{\mathcal{A}},\subseteq)$ and a {\it pBa} $\mathcal{D}(X_A):=(CpUp(X_A),X_A,\emptyset,\cup,\cap,\rightarrow,\neg)$ such that $\mathcal{A}$ is isomorphic to $\mathcal{D}(X_{A})$, through the map $\Phi:A \rightarrow CpUp(X_{A})$ defined as $\Phi (a) := \sigma (a)$, for any $a \in A$.

\item Given any Esakia space $(X,\tau,\leq)$, there exist a {\it pBa} $\mathcal{D}(X):=$\\
$(CpUp(X),X,\emptyset,\cup,\cap,\rightarrow,\neg)$ and an Esakia space $(X_{CpUp(X)},\tau_{\mathcal{D}(X)},\subseteq)$ such that $\tau$ is homeomorphic to $\tau_{\mathcal{D}(X)}$ and the poset $(X,\leq)$ is order-isomorphic to the poset $(X_{CpUp(X)},\subseteq)$.
\end{enumerate}
\end{theorem}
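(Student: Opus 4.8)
The plan is to treat this as the classical Esakia/Priestley duality applied to the special case of Heyting algebras, so the argument splits cleanly into the two stated directions; the text has already set up all the ingredients ($\sigma(a)$, $\tau_{\mathcal{A}}$, the operator $\rightarrow$ defined by \eqref{eq-ra}), so the work is in verifying that the maps are well-defined morphisms and mutually inverse. For part~(1), I would first recall that by the Birkhoff/Stone prime filter theorem for bounded distributive lattices, the map $\Phi(a):=\sigma(a)=\{F\in X_A : a\in F\}$ is an injective bounded-lattice homomorphism of $(A,1,0,\vee,\wedge)$ into $(Up(X_A),X_A,\emptyset,\cup,\cap)$: injectivity is the prime filter separation lemma ($a\neq b$ forces, say, $a\not\leq b$, and a prime filter containing $a$ but not $b$ exists), and the meet/join clauses are the standard computations $\sigma(a\wedge b)=\sigma(a)\cap\sigma(b)$, $\sigma(a\vee b)=\sigma(a)\cup\sigma(b)$ using primeness. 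Each $\sigma(a)$ is an upset for $\subseteq$ and is clopen by definition of $\tau_{\mathcal{A}}$, and one checks that these are the \emph{only} clopen upsets, which identifies the image of $\Phi$ with $CpUp(X_A)$ exactly. Then I would verify that $\Phi$ respects $\rightarrow$, i.e. $\sigma(a\rightarrow b)=\sigma(a)\rightarrow\sigma(b)=X_A\setminus{\downarrow}(\sigma(a)\setminus\sigma(b))$; this is the one genuinely Heyting-specific step, and it reduces to the prime filter fact that for a prime filter $F$, $a\rightarrow b\notin F$ iff there is a prime filter $G\supseteq F$ with $a\in G$, $b\notin G$ — the forward direction uses that $\{c : c\wedge a\leq b\text{ or }c\in F\}$... more cleanly, the filter generated by $F\cup\{a\}$ is proper modulo $b$ when $a\rightarrow b\notin F$, so it extends to a prime filter of the required form. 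Respecting $\neg$ then follows by taking $b=0$. Finally, that $(X_A,\tau_{\mathcal{A}},\subseteq)$ is an Esakia space: it is a Priestley space because the subbasis consists of clopen upsets and their complements and Priestley separation is immediate from prime filter separation, compactness follows from an Alexander subbasis argument combined with the prime filter theorem, and the Esakia condition (${\downarrow}U$ clopen for $U$ clopen) holds because the clopen sets are finite unions of sets $\sigma(a)\cap(X_A\setminus\sigma(b))$ and one computes ${\downarrow}\sigma(a)$ directly, or cites that the Esakia property is equivalent to $\rightarrow$ being closed in $CpUp(X_A)$, which we have just established.

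For part~(2), starting from an Esakia space $(X,\tau,\leq)$, the text already notes that $\mathcal{D}(X)=(CpUp(X),X,\emptyset,\cup,\cap,\rightarrow,\neg)$ forms a \textit{pBa}. The claim to prove is that applying the construction of part~(1) to $\mathcal{D}(X)$ returns $X$ up to homeomorphism and order-isomorphism. Define $\varepsilon: X \to X_{CpUp(X)}$ by $\varepsilon(x):=\{U\in CpUp(X): x\in U\}$. I would check in turn: $\varepsilon(x)$ is a prime filter of $\mathcal{D}(X)$ (filter because $CpUp(X)$ is closed under $\cap$ and upward under $\subseteq$; prime because if $x\in U\cup V$ then $x\in U$ or $x\in V$; proper because $x\notin\emptyset$); $\varepsilon$ is order-preserving and order-reflecting, i.e. $x\leq y \iff \varepsilon(x)\subseteq\varepsilon(y)$ — the forward direction is that clopen upsets are upsets, the converse is exactly the Priestley separation axiom ($x\not\leq y$ yields a clopen upset containing $x$ but not $y$); $\varepsilon$ is injective (immediate from order-reflection and antisymmetry) and surjective onto the prime filters of $\mathcal{D}(X)$ — surjectivity is the point where compactness of $\tau$ is used: given a prime filter $P\subseteq CpUp(X)$, the family $\{U : U\in P\}\cup\{X\setminus V : V\notin P\}$ has the finite intersection property, so by compactness its intersection contains some point $x$, and one verifies $\varepsilon(x)=P$; finally $\varepsilon$ is a homeomorphism, since it carries the subbasic clopen $\sigma_{\mathcal{D}(X)}(U)$ and its complement to $U$ and $X\setminus U$ respectively, so it is a continuous closed bijection. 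The order-isomorphism part is already contained in the order-reflection property above.

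The main obstacle — the step that is not pure lattice bookkeeping — is the interaction between $\rightarrow$ and prime filters: in part~(1) the identity $\sigma(a\rightarrow b)=X_A\setminus{\downarrow}\bigl(\sigma(a)\setminus\sigma(b)\bigr)$, and in part~(2) the verification that the Esakia condition makes $\rightarrow$ well-defined on $CpUp(X)$ and matches the abstract $\rightarrow$ of $\mathcal{D}(X)$ under $\varepsilon$. Everything else (compactness via Alexander's subbasis lemma, Priestley separation from prime-filter separation, the lattice-homomorphism clauses) is standard Stone-type argument, and since the paper explicitly cites \cite{nick2006} for this theorem I would present it in compressed form, expanding only the $\rightarrow$-compatibility computation, which is where the Heyting structure genuinely enters and where the later representation results for \textit{ccpBa}s and \textit{c${\vee}$cpBa}s will need to be grafted on.
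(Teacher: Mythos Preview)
Your proposal is correct and outlines the standard Esakia duality argument. Note, however, that the paper does not supply its own proof of this theorem: it is stated with the citation \cite{nick2006} and used as a known input to derive the subsequent duality for \textit{ccpBa}s and \textit{c${\vee}$cpBa}s (Theorem~\ref{dual-ccpba}), so there is nothing to compare against beyond observing that your sketch matches the classical proof the citation points to.
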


The above duality result can be extended to $ccpBa$s and $c{\vee}cpBa$s. Consider a $ccpBa$ $\mathcal{A}:=(A,1,0, \vee,\wedge, \rightarrow,\neg,{\sim})$. Observation \ref{obs-ccpba} implies the reduct $(A,1,0,\vee,\wedge,\rightarrow,\neg)$ is a {\it pBa}.  Using Theorem \ref{dual-pba}, we have an Esakia space $(X_{A},\tau_{\mathcal{A}},\subseteq)$ such that the {\it pBa} $(A,1,0,\vee,\wedge,\rightarrow,\neg)$ is isomorphic to  the {\it pBa} $\mathcal{D}(X_A):=(CpUp(X_A),X_A,\emptyset,\cap,\cup,\rightarrow,\neg)$, through the map $\Phi$.
Define $Y_0:=\sigma({\sim} 1)$. Since $\Phi$ is a homomorphism,\\
\centerline{$\sigma ({\sim} 1) = \Phi ({\sim} 1) = \Phi (\neg \neg {\sim} 1) = \neg \neg \Phi ({\sim} 1) = \neg \neg \sigma ({\sim} 1)$}
giving $Y_0 = X_A {\setminus} \downarrow (X_A {\setminus} \downarrow Y_0)$. In particular, we have $X_A {\setminus} \downarrow (X_A {\setminus} \downarrow Y_0) \subseteq Y_0$. Expanding this,
\begin{align*}
&\forall x \in X_A (x \notin \ \downarrow (X_A {\setminus} \downarrow Y_0)) \Rightarrow x \in Y_0) \\
&\forall x \in X_A (\forall y \in X_A (x \subseteq y \Rightarrow y \in \ \downarrow Y_0) \Rightarrow x \in Y_0) \\
&\forall x \in X_A (\forall y \in X_A (x \subseteq y \Rightarrow \exists z \in X_A (y \subseteq z ~ \& ~ z \in Y_0)) \Rightarrow x \in Y_0).
\end{align*}
Thus, starting from a {\it ccpBa} $\mathcal{A}$, we have obtained an Esakia space $(X_{A},\tau_{\mathcal{A}},\subseteq)$ satisfying
\begin{align}
\label{eq-E1}
\forall x \in X_A (\forall y \in X_A (x \subseteq y \Rightarrow \exists z \in X_A (y \subseteq z ~ \& ~ z \in Y_0)) \Rightarrow x \in Y_0).
\end{align}
\vskip 3pt

\noindent Conversely, consider an Esakia space $(X,\tau,\leq)$ and $Y_0 \in X$ such that $Y_0$ is a clopen set in $\tau$ satisfying
\begin{align}
\label{eq-E1-1}
\forall x \in X (\forall y \in X (x \leq y \Rightarrow \exists z \in X (y \leq z ~ \& ~ z \in Y_0)) \Rightarrow x \in Y_0).
\end{align}
Then, by Theorem \ref{dual-pba},  we have the {\it pBa} $\mathcal{D}(X):=(CpUp(X),X,\emptyset,\cup,\cap,\rightarrow,\neg)$.\\
Now, define ${\sim} X := Y_0$ and ${\sim} U := U \rightarrow {\sim} X$ for all $U \in CpUp(X)$, thus making  $(CpUp(X),X,\emptyset,\cup,\cap,\rightarrow,{\sim})$ a {\it cc} lattice. Any {\it pBa} has the property $a \leq \neg \neg a$, for all $a \in A$. Thus, ${\sim} X \subseteq \neg \neg {\sim} X$. Moreover, it can be observed that (\ref{eq-ra}) and (\ref{eq-E1-1}) imply $\neg \neg {\sim} X \subseteq {\sim} X$. Therefore,  ${\sim} X = \neg \neg {\sim} X$. By Proposition \ref{prop3-ccpBa}(2), $(CpUp(X),X,\emptyset,\cup,\cap,\rightarrow,\neg,{\sim})$ is a  {\it ccpBa}. \\
For simplicity, we use the same notation $\mathcal{D}(X)$ for the {\it pBa} and the {\it ccpBa}.
\vskip 3pt

For {\it c${\vee}$cpBa}s, note that the property $a \vee {\sim} a  = 1$ corresponds to the following condition
\begin{align}
\label{eq-E2-1}
\forall x,y  \in X(x,y \notin Y_0 \Rightarrow (x\leq y \Rightarrow y \leq x))
\end{align}
in the context of logic and relational semantics \cite{Segerberg1968}. This correspondence can be replicated here to get the dual topological spaces for {\it c${\vee}$cpBa}s. If $\mathcal{A}$ is a $c{\vee}cpBa$, we have $\sigma(a) \cup (\sigma(a) \rightarrow Y_0) =X_A$ for all $a \in A$, i.e. 
\begin{align}
\label{eq-E3}
\sigma(a) \cup X_A {\setminus} \downarrow (\sigma (a) {\setminus} Y_0) =X_A \mbox{ for all } a\in A.
\end{align}
In this case, the Esakia space $(X_A,\tau_A,\subseteq)$ satisfies
\begin{align}
\label{eq-E2}
\forall x,y  \in X_A(x,y \notin Y_0 \Rightarrow (x\subseteq y \Rightarrow y \subseteq x)).
\end{align}
For this, we have to show that for any two prime filters $F,G \in X_A$, if $F,G \notin Y_0$ and $F\subseteq G$ then $G \subseteq F$. Suppose not, i.e.  there exist two prime filters $F,G$ such that $F,G \notin Y_0$, $F\subseteq G$ and there exists $x \in G$ such that $x \notin F$. Thus $F \notin \sigma (x)$ and $G \in \sigma (x)$. In Condition (\ref{eq-E3}), since $F \in X_A$, we must have $F \in \sigma(x) \cup X_A {\setminus} \downarrow (\sigma (x) {\setminus} Y_0)$. We already have $F \notin \sigma (x)$. Therefore $F \in X_A {\setminus} \downarrow (\sigma (x) {\setminus} Y_0)$, i.e. $F \notin {\downarrow (\sigma (x) {\setminus} Y_0)}$. Since $F \subseteq G$, using the definition of $\downarrow (\sigma (x) {\setminus} Y_0)$, we have $G \notin \sigma(x) {\setminus} Y_0$, i.e. $G \notin \sigma(x)$ (because $G \notin Y_0$), a contradiction.
\vskip 3pt

\noindent Conversely, consider an Esakia space $(X,\tau,\leq)$ and $Y_0 \in X$ such that $Y_0$ is a clopen set satisfying Conditions (\ref{eq-E1-1}) and (\ref{eq-E2}). 
We have to only show that for any $V \in CpUp(X)$, $V \cup (V \rightarrow {\sim} X) = X$, i.e. $V \cup (X {\setminus} \downarrow (V {\setminus} Y_0 )) = X$. Suppose not, i.e. there exists $x \in X$ such that $x \notin V$ and $x \in {\downarrow (V {\setminus} Y_0)}$. Then there exists $y \in X$ such that $x \leq y$ and $y \in V {\setminus} Y_0$. We have $x \leq y$, $y \in V$ and $y \notin Y_0$, i.e. $x \notin Y_0$ (because $Y_0$ is an upset). Using $x \leq y$ and Condition (\ref{eq-E2}), we have $y \leq x$. Since $y \in V$ and $V$ is an upset, we have $x \in V$, a contradiction.

We thus obtain 
\begin{theorem}[Duality for \textit{ccpBa}s and \textit{c${\vee}$cpBa}s]
\label{dual-ccpba}
$ $
\begin{enumerate}[label={{\rm (\arabic*)}}, leftmargin=1.3 \parindent, topsep=0pt]
\item Given any {\it ccpBa} ({\it c${\vee}$cpBa}) $\mathcal{A}:=(A,1,0, \vee,\wedge, \rightarrow,\neg,{\sim})$, we have the following.
\begin{enumerate}[label={{\rm (\alph*)}}, leftmargin=1.3 \parindent, noitemsep, topsep=0pt]
\item The tuple $(X_A,\tau_{\mathcal{A}},\subseteq)$ forms an Esakia space; $Y_0:=\sigma({\sim} 1)$ is a clopen set  in $\tau_{\mathcal{A}}$ satisfying Condition \ref{eq-E1} (Conditions \ref{eq-E1} and \ref{eq-E2}).
\item The tuple $\mathcal{D}(X_A):=(CpUp(X_A),X_A,\emptyset,\cup,\cap,\rightarrow,\neg,{\sim})$ is a {\it ccpBa} ({\it c${\vee}$cpBa}).
\item $\mathcal{A}$ is isomorphic to $\mathcal{D}(X_A)$, through the map $\Phi:A \rightarrow CpUp(X_{A})$ defined as $\Phi (a) := \sigma (a)$, for any $a \in A$.
\end{enumerate}
\item Given any ordered topological space $(X,\tau,\leq,Y_0)$ such that $(X,\tau,\leq)$ is an Esakia space and $Y_0$ is a clopen set satisfying Condition {\rm (\ref{eq-E1-1})} (Conditions {\rm (\ref{eq-E1-1})} and {\rm (\ref{eq-E2-1})}), we have the following.
\begin{enumerate}[label={{\rm (\alph*)}}, leftmargin=1.3 \parindent, noitemsep, topsep=0pt]
\item The tuple $\mathcal{D}(X):=(CpUp(X),X,\emptyset,\cup,\cap,\rightarrow,\neg,{\sim})$ forms a {\it ccpBa} ({\it c${\vee}$cpBa}).
\item $(X_{CpUp(X)},\tau_{\mathcal{D}(X)},\subseteq)$ is an Esakia space.\\
Moreover, $\widetilde{Y_0}$ defined as the set of all prime filters containing ${\sim}X$ in $\mathcal{D}(X)$, satisfies the following.\\
$\forall x \in X_{CpUp(X)} (\forall y \in X_{CpUp(X)} (x \subseteq y \Rightarrow$\\
\null \hfill $\exists z \in X_{CpUp(X)} (y \subseteq z ~ \& ~ z \in \widetilde{Y_0})) \Rightarrow x \in \widetilde{Y_0}).$\\
If $Y_0$ satisfies Condition {\rm (\ref{eq-E2-1})} then $\widetilde{Y_0}$ satisfies\\
\null \hspace{3cm}$\forall x,y  \in X_{CpUp(X)}(x,y \notin \widetilde{Y_0} \Rightarrow (x\subseteq y \Rightarrow y \subseteq x)).$
\item $\tau$ is homeomorphic to $\tau_{\mathcal{D}(X)}$ and $(X,\leq)$ is order-isomorphic to $(X_{CpUp(X)},\subseteq)$.
\end{enumerate}
\end{enumerate}
\end{theorem}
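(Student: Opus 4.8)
\emph{Proof strategy.} Almost all the work has in fact been done in the discussion preceding the statement; the proof is essentially the assembly of those observations together with Theorem~\ref{dual-pba}, plus the one extra check that the {\it pBa}-isomorphism $\Phi$ also respects the negation ${\sim}$. The plan is to organise everything as the two directions of the asserted correspondence.

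For Part~(1), starting from a {\it ccpBa} $\mathcal{A}$, I would proceed as follows. First, by Observation~\ref{obs-ccpba} the reduct $(A,1,0,\vee,\wedge,\rightarrow,\neg)$ is a {\it pBa}, so Theorem~\ref{dual-pba}(1) produces the Esakia space $(X_A,\tau_{\mathcal{A}},\subseteq)$ and the {\it pBa}-isomorphism $\Phi(a):=\sigma(a)$ onto $\mathcal{D}(X_A)$. Next, put $Y_0:=\sigma({\sim}1)$; it is clopen because $\sigma(b)$ and $X_A\setminus\sigma(b)$ exhaust the clopen upsets of $\tau_{\mathcal{A}}$, and applying $\Phi$ to the identity $\neg\neg{\sim}1={\sim}1$ of Observation~\ref{obs-ccpba}(2) and unwinding $(\ref{eq-ra})$ gives Condition~$(\ref{eq-E1})$ — exactly the computation displayed before the theorem; in the {\it c${\vee}$cpBa} case the identity $\sigma(a)\cup(\sigma(a)\rightarrow Y_0)=X_A$ yields Condition~$(\ref{eq-E2})$ by the argument given there. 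Then the converse construction (invoking Proposition~\ref{prop3-ccpBa}(2)) applied to $(X_A,\tau_{\mathcal{A}},\subseteq)$ and $Y_0$ shows $\mathcal{D}(X_A)$ with ${\sim}X_A:=Y_0$ and ${\sim}U:=U\rightarrow{\sim}X_A$ is a {\it ccpBa} (resp. {\it c${\vee}$cpBa}). Finally, $\Phi$ is an isomorphism of the full signature: it preserves $1,0,\vee,\wedge,\rightarrow,\neg$, and since ${\sim}a=a\rightarrow{\sim}1$ and $\Phi({\sim}1)=Y_0={\sim}X_A$, one gets $\Phi({\sim}a)=\Phi(a)\rightarrow{\sim}X_A={\sim}\Phi(a)$.

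For Part~(2), starting from an Esakia space $(X,\tau,\leq)$ with distinguished clopen $Y_0$ satisfying $(\ref{eq-E1-1})$ (and $(\ref{eq-E2-1})$ in the {\it c${\vee}$cpBa} case), the plan is: apply Theorem~\ref{dual-pba}(2) for the {\it pBa} $\mathcal{D}(X)$, the Esakia space $(X_{CpUp(X)},\tau_{\mathcal{D}(X)},\subseteq)$, and the homeomorphism/order-isomorphism, giving (b) and (c); then set ${\sim}X:=Y_0$, ${\sim}U:=U\rightarrow{\sim}X$, observe $(CpUp(X),\dots,{\sim})$ is a {\it cc} lattice, and verify ${\sim}X=\neg\neg{\sim}X$ ($\subseteq$ is automatic, while $\neg\neg{\sim}X\subseteq{\sim}X$ comes from expanding $\neg\neg Y_0=X\setminus\downarrow(X\setminus\downarrow Y_0)$ via $(\ref{eq-ra})$ and reading off the antecedent of $(\ref{eq-E1-1})$); Proposition~\ref{prop3-ccpBa}(2) then makes $\mathcal{D}(X)$ a {\it ccpBa}, and when $(\ref{eq-E2-1})$ holds the argument shown before the theorem gives $V\cup(V\rightarrow{\sim}X)=X$, i.e. a {\it c${\vee}$cpBa} — this is (a). For the identification of $\widetilde{Y_0}$ in (b), apply Part~(1) to $\mathcal{D}(X)$: the isomorphism there sends ${\sim}X$ to $\sigma({\sim}X)=\widetilde{Y_0}$, and since ${\sim}X=\neg\neg{\sim}X$ in $\mathcal{D}(X)$, the Part~(1) computation forces $\widetilde{Y_0}$ to satisfy the displayed $(\ref{eq-E1})$-type condition, and likewise the $(\ref{eq-E2-1})$-type condition in the {\it c${\vee}$cpBa} case.

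\emph{Main difficulty.} No step is genuinely deep: the two nontrivial ingredients — the order-theoretic equivalence between the involutivity $\neg\neg{\sim}1={\sim}1$ and Condition~$(\ref{eq-E1-1})$ on the dual side (and between $a\vee{\sim}a=1$ and $(\ref{eq-E2-1})$), and the clopen-ness of $\sigma({\sim}1)$ — have already been carried out in the discussion above. The only real care needed is bookkeeping: making sure that the set $Y_0$ extracted from $\mathcal{A}$, the set used to build $\mathcal{D}(X)$, and the set $\widetilde{Y_0}$ of Part~(2)(b) are consistently matched under $\Phi$, and that $\Phi$ commutes with ${\sim}$, so that the isomorphisms are of the full algebraic signature rather than merely of the {\it pBa}-reducts.
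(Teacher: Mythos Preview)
Your proposal is correct and follows essentially the same approach as the paper: both recognise that the substantive work (the equivalence of $\neg\neg{\sim}1={\sim}1$ with Condition~(\ref{eq-E1-1}), of $a\vee{\sim}a=1$ with Condition~(\ref{eq-E2-1}), and the construction of $\mathcal{D}(X)$ as a {\it ccpBa}) is already carried out in the preceding discussion, so that the proof reduces to invoking Theorem~\ref{dual-pba} and verifying $\Phi({\sim}a)={\sim}\Phi(a)$ via $\Phi({\sim}a)=\Phi(a\rightarrow{\sim}1)=\Phi(a)\rightarrow Y_0={\sim}\Phi(a)$. Your treatment of Part~(2)(b), obtaining the conditions on $\widetilde{Y_0}$ by applying Part~(1) to $\mathcal{D}(X)$, is exactly what the paper's terse ``direct from Theorem~\ref{dual-pba}(2) and the discussion following it'' amounts to.
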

\begin{proof}$ $\\
(1) From Theorem \ref{dual-pba}(1), we already have that $\Phi$ is an isomorphism between the underlying {\it pBa}s - $(A,1,0, \vee, \wedge, \rightarrow,\neg)$ and $(CpUp(X_A),X_A,\emptyset,\cup,\cap,\rightarrow,\neg)$. We have to only show $\Phi({\sim} x) = {\sim}\Phi(x)$ for all $x \in A$. Indeed, $\Phi ({\sim} x) = \Phi(x \rightarrow {\sim} 1) = \Phi(x) \rightarrow \Phi ({\sim} 1) = \sigma(x) \rightarrow Y_0 = \sigma(x) \rightarrow {\sim} X_A = {\sim} \sigma (x)  = {\sim} \Phi(x)$.\\
(2) This is direct from Theorem  \ref{dual-pba}(2) and the discussion following it.
\end{proof}

\section{Intuitionistic logic with minimal negation}
\label{seclogic}


In this section, we present the logic corresponding to the class of {\it ccpBa}s, called `Intuitionistic logic with minimal negation' ({\rm ILM}) \cite{AM2}.
As is well-known,  positive logic (PL), minimal logic (ML), intuitionistic  logic (IL) and classical logic (CL)  
are sound and complete with respect to the class of {\it rpc} lattices, {\it cc} lattices, {\it pBa}s and Boolean algebras respectively \cite{HR,odintsov2008}. Since the ${\it ccpBa}$ is an amalgamation of {\it cc} lattice and {\it pBa}, it is then expected that {\rm ILM} will be defined using IL and ML. We shall use the terminology and axiomatization of {\rm ML} and {\rm IL} as given in \cite{HR}.

\subsection{ILM and ILM-\texorpdfstring{${\vee}$}~}
\label{secILM}

\begin{definition}[Intuitionistic logic with minimal negation (\textrm{ILM})]
\label{ILM}
\index{intuitionistic logic with minimal negation (ILM)}
\index{ILM-{$\vee$}}
{\rm \cite{AM2}} {\rm The alphabet of the language $\mathcal{L}$ of {\rm ILM} is that of {\rm IL}, consisting of propositional constants $\top$ and $\bot$, a set PV of propositional variables, and logical connectives $\rightarrow$ (implication), $\vee$ (disjunction), $\wedge$ (conjunction), ${\neg}$ (negation). Additionally, there is a unary connective ${\sim}$. The formulas are given by the scheme:
\[ \top \mid \bot \mid p \mid \alpha \wedge \beta \mid \alpha \vee \beta \mid \alpha \rightarrow \beta \mid \neg \alpha \mid {\sim} \alpha \]
where $p \in$ PV. The set   of all formulas is denoted by $F$.
\vspace{3mm}

\noindent {\it Axioms}:

\begin{enumerate}[label={\rm{(A}$\arabic*)$}, leftmargin=3 \parindent, noitemsep]
\item $\alpha \rightarrow (\beta \rightarrow \alpha)$
\item $(\alpha \rightarrow (\beta \rightarrow \gamma)) \rightarrow ((\alpha \rightarrow \beta) \rightarrow (\alpha \rightarrow \gamma))$
\item (i) $\alpha \rightarrow (\alpha \vee \beta)$, (ii) $\beta \rightarrow (\alpha \vee \beta)$
\item $(\alpha \rightarrow \gamma) \rightarrow ((\beta \rightarrow \gamma) \rightarrow ((\alpha \vee \beta) \rightarrow \gamma))$
\item (i) $(\alpha \wedge \beta) \rightarrow \alpha$, (ii) $(\alpha \wedge \beta) \rightarrow \beta$
\item $(\alpha \rightarrow \beta) \rightarrow ((\alpha \rightarrow \gamma) \rightarrow (\alpha \rightarrow (\beta \wedge \gamma)))$
\item $\alpha \rightarrow \top$
\item $\bot \rightarrow \alpha$ 
\item $(\alpha \rightarrow \beta) \rightarrow ((\alpha \rightarrow \neg \beta) \rightarrow \neg \alpha)$
\item $\neg \alpha \rightarrow (\alpha \rightarrow \beta)$
\item ${\sim} \alpha \leftrightarrow (\alpha \rightarrow \neg \neg {\sim} \top)$
\end{enumerate}
Modus ponens {\rm (MP)} is the only rule of inference in {\rm ILM}. The deduction procedure for ILM is specified in the usual manner, to give the relation of syntactic consequence $\vdash_{{\rm ILM}}$  and define $\Gamma \vdash_{{\rm ILM}} \alpha$, for any $\Gamma \cup \{\alpha\} \subseteq F$.
\vskip 3pt
\noindent Addition of the following axiom gives the logic {\rm ILM}-{$\vee$}.
\begin{enumerate}[label={\rm{(A}$\arabic*)$}, leftmargin=3 \parindent, noitemsep]
\setcounter{enumi}{11}
\item $\alpha \vee {\sim} \alpha$
\end{enumerate}}
\end{definition}

\noindent Observe that axiom (A11) connects  the negations $\neg$ and ${\sim}$.

\subsubsection{Properties of {\rm ILM} and comparison with {\rm IL} and {\rm ML}}
\label{sec-embed}
It can be shown that the deduction theorem (DT) holds for {\rm ILM}.
Let us now give the algebraic semantics for the logics. 

Consider any {\it ccpBa} $\mathcal{A}
:=(A,1,0, \vee, \wedge, \rightarrow,\neg,{\sim} )$. 
A {\it valuation} is a map $v$ from  PV to $A$, and   can  be extended to  $F$  in the standard way \cite{HR}. For a formula $\alpha \in F$, if for all valuations $v$ on $\mathcal{A}$, $v(\alpha)=1$, then we say $\alpha$ is valid in $\mathcal{A}$ (denoted as $\vDash_{\mathcal{A}} \alpha$). In the classical manner, we obtain

\begin{theorem}[{\bf Algebraic semantics}]
\label{soundcomplete}
\index{intuitionistic logic with minimal negation (ILM)!sound}
\index{intuitionistic logic with minimal negation (ILM)!complete}
For any $\alpha \in F$, $\vdash_{\mathrm{ILM}} \alpha$ ($\vdash_{\mathrm{ILM}-{\vee}} \alpha$) if and only if $\vDash_{\mathcal{A}} \alpha$ for every {\it ccpBa} ({\it c${\vee}$cpBa}) $\mathcal{A}$.
\end{theorem}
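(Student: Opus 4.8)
The plan is to follow the standard Lindenbaum--Tarski argument adapted to {\it ccpBa}s. Soundness is the routine direction: I would check that each axiom (A1)--(A11) (respectively (A1)--(A12)) evaluates to $1$ under every valuation $v$ on every {\it ccpBa} (respectively {\it c$\vee$cpBa}), and that {\rm MP} preserves the property of being valid, i.e. if $v(\alpha)=1$ and $v(\alpha\rightarrow\beta)=1$ then $v(\beta)=1$. Axioms (A1)--(A10) are exactly the axioms of {\rm IL} and hold in any {\it pBa}, hence in the {\it pBa}-reduct $(A,1,0,\vee,\wedge,\rightarrow,\neg)$ of a {\it ccpBa}; for (A11) one uses the defining identity ${\sim}a = a\rightarrow(\neg\neg{\sim}1)$ together with the fact that in an {\it rpc} lattice $x\leftrightarrow y$ (i.e. $(x\rightarrow y)\wedge(y\rightarrow x)$) equals $1$ iff $x=y$; for (A12) one invokes the extra {\it c$\vee$cpBa} axiom $a\vee{\sim}a=1$ directly. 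A straightforward induction on the length of a derivation then gives $\vdash\alpha\Rightarrow{\vDash_{\mathcal A}\alpha}$ for all relevant $\mathcal A$.

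For completeness I would build the Lindenbaum algebra. Define $\alpha\sim\beta$ iff $\vdash_{\rm ILM}\alpha\rightarrow\beta$ and $\vdash_{\rm ILM}\beta\rightarrow\alpha$; using the deduction theorem (DT), which the excerpt states holds for {\rm ILM}, and axioms (A1)--(A10) one shows this is a congruence with respect to all connectives $\vee,\wedge,\rightarrow,\neg$, and with respect to ${\sim}$ (congruence for ${\sim}$ follows from (A11), since ${\sim}$ is then term-definable from $\rightarrow$ and the constant ${\sim}\top$, or can be checked directly). Let $L$ be the quotient set $F/{\sim}$ with induced operations, $1:=[\top]$, $0:=[\bot]$. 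The {\rm IL}-fragment of the axioms makes $(L,1,0,\vee,\wedge,\rightarrow,\neg)$ a {\it pBa} by the classical argument. It remains to verify that $(L,1,0,\vee,\wedge,\rightarrow,\neg,[{\sim}])$ is a {\it ccpBa}: axiom (A11) read in the quotient says exactly $[{\sim}\alpha]=[\alpha]\rightarrow(\neg\neg[{\sim}\top])$, i.e. ${\sim}a = a\rightarrow(\neg\neg{\sim}1)$ with ${\sim}1=[{\sim}\top]$, which is precisely the defining condition of Definition \ref{ccpBa}. For {\rm ILM}-${\vee}$, axiom (A12) gives $a\vee{\sim}a=1$ in $L$, so $L$ is a {\it c$\vee$cpBa}. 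Now if $\not\vdash_{\rm ILM}\alpha$, take the canonical valuation $v_0(p):=[p]$; then $v_0(\alpha)=[\alpha]\neq[\top]=1$ in $L$, so $\alpha$ is not valid in the {\it ccpBa} $L$, contradicting the right-hand side. This proves the contrapositive of completeness, and the {\rm ILM}-${\vee}$ case is identical with $L$ a {\it c$\vee$cpBa}.

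I do not expect any genuine obstacle here: the theorem is the textbook algebraic completeness pattern, and the only ``new'' ingredient beyond the classical treatment of {\rm IL} is the single axiom (A11), whose role is precisely to force the term-definability of ${\sim}$ that appears in Definition \ref{ccpBa}. The one point deserving a little care is verifying that ${\sim}$ is well-defined on congruence classes and that (A11) survives the quotient as an \emph{equality} (not merely a provable biconditional) --- this is where one uses that in the Lindenbaum {\it pBa}, $\vdash\alpha\leftrightarrow\beta$ translates to $[\alpha]=[\beta]$. Everything else is a mechanical transcription of the standard proof for {\rm IL} (as in \cite{HR}), with the reduct $(A,1,0,\vee,\wedge,\rightarrow,\neg)$ of a {\it ccpBa} playing the role that an arbitrary {\it pBa} plays there, and Definition \ref{ccpBa} together with Observation \ref{obs-ccpba} supplying the bridge between the syntactic axiom (A11) and the algebraic condition.
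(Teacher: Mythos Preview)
Your proposal is correct and follows exactly the approach the paper has in mind: the paper states the theorem with the remark ``In the classical manner, we obtain'' and gives no further proof, so the standard Lindenbaum--Tarski argument you outline (soundness by induction on derivations, completeness via the Lindenbaum algebra verified to be a {\it ccpBa}/{\it c$\vee$cpBa} using (A11)/(A12)) is precisely what is intended.
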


\noindent Thus, using the soundness part of the above theorem, Proposition \ref{prop1-ccpBa} gets a version in terms of ILM-formulas.

\begin{proposition}
\label{prop-ilm}~
\begin{multicols}{2}
\begin{enumerate}[label={{\rm (\alph*)}}, leftmargin=1.5 \parindent, noitemsep, topsep=0pt]
\item $\vdash_{{\rm ILM}} {\neg} \alpha \leftrightarrow (\alpha \rightarrow \bot)$
\item $\vdash_{{\rm ILM}} {\neg} {\neg} {\sim} {\top} \leftrightarrow {\sim} \top$
\item $\vdash_{{\rm ILM}} {\sim} \alpha \leftrightarrow (\alpha \rightarrow {\sim} \top)$
\item $\vdash_{{\rm ILM}}{\sim}{\sim}({\sim} \top \rightarrow \alpha)$
\item $\vdash_{{\rm ILM}}{\sim} \alpha \leftrightarrow \neg (\alpha \wedge \neg {\sim} \top)$
\item $\vdash_{{\rm ILM}} \neg \alpha \rightarrow {\sim} \alpha$
\item $\vdash_{{\rm ILM}} \alpha \rightarrow {\sim} \neg \alpha$
\item $\vdash_{{\rm ILM}} \neg {\sim} \alpha \rightarrow {\sim} \neg \alpha$
\item $\vdash_{{\rm ILM}}  {\sim} \alpha \leftrightarrow \neg \neg {\sim} \alpha$
\item $\vdash_{{\rm ILM}} \neg {\sim} \neg \alpha \rightarrow \neg \alpha$
\end{enumerate}
\end{multicols}
\end{proposition}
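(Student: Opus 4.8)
The plan is to exploit the soundness part of Theorem~\ref{soundcomplete}: each item (a)--(j) is exactly the syntactic counterpart of the corresponding item in Proposition~\ref{prop1-ccpBa}, so it suffices to observe that validity in every \textit{ccpBa} follows from the already-proven algebraic facts. Concretely, for a formula $\varphi$ of the shape $\psi_1 \leftrightarrow \psi_2$ (or $\psi_1 \rightarrow \psi_2$), and any \textit{ccpBa} $\mathcal{A}$ with valuation $v$, one computes $v(\varphi)$ using the standard clauses $v(\alpha\wedge\beta)=v(\alpha)\wedge v(\beta)$, $v(\alpha\rightarrow\beta)=v(\alpha)\rightarrow v(\beta)$, $v(\neg\alpha)=\neg v(\alpha)$, $v({\sim}\alpha)={\sim} v(\alpha)$, $v(\top)=1$, $v(\bot)=0$, and then invokes the matching clause of Proposition~\ref{prop1-ccpBa} together with the \textit{rpc} facts $a\rightarrow b = 1 \Leftrightarrow a\leq b$ and $a\leftrightarrow b = 1 \Leftrightarrow a=b$. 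Since $v(\alpha)$ ranges over all of $A$ as $v$ ranges over all valuations (and $\alpha$ over all formulas), $\vDash_{\mathcal{A}}\varphi$ is equivalent to the universally quantified inequality/equality in Proposition~\ref{prop1-ccpBa}. By the completeness direction of Theorem~\ref{soundcomplete}, $\vDash_{\mathcal{A}}\varphi$ for all \textit{ccpBa}s $\mathcal{A}$ yields $\vdash_{\mathrm{ILM}}\varphi$.

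I would carry this out item by item in the same order as Proposition~\ref{prop1-ccpBa}. First (a): $v(\neg\alpha\leftrightarrow(\alpha\rightarrow\bot)) = 1$ because $\neg a = a\rightarrow 0$ in any \textit{ccpBa} (this is the defining property, Definition~\ref{ccpBa}). Next (b): $\neg\neg{\sim} 1 = {\sim} 1$ is Observation~\ref{obs-ccpba}(2), so $v({\neg}{\neg}{\sim}{\top}\leftrightarrow{\sim}\top)=1$. Then (c): ${\sim} a = a\rightarrow{\sim} 1$ is Observation~\ref{obs-ccpba}(1). Items (d)--(j) are, respectively, Proposition~\ref{prop1-ccpBa}(3), (5), (6), (7), (8), (9), (10)--noting that in (d) the equivalence ${\sim}{\sim}({\sim}\top\rightarrow\alpha)$ is shorthand for $\vdash_{\mathrm{ILM}}{\sim}{\sim}({\sim}\top\rightarrow\alpha)\leftrightarrow\top$, i.e.\ the value is $1$, matching ${\sim}{\sim}({\sim} 1\rightarrow a)=1$; and in (f)--(h) and (j) the implications correspond to the $\leq$-inequalities via the \textit{rpc} property.

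Alternatively, one could give purely syntactic derivations inside ILM, mimicking the algebraic manipulations of the proof of Proposition~\ref{prop1-ccpBa} using the deduction theorem (which, as noted, holds for ILM) together with axioms (A1)--(A11) and the derived ML/IL theorems. This is more laborious but entirely routine given the standard deduction-theorem machinery for intuitionistic-style calculi; I would mention it only as a remark, since the semantic route via Theorem~\ref{soundcomplete} is immediate.

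The only point requiring a word of care--the main (minor) obstacle--is the interpretation of the statements (d) and the one-directional items (f), (g), (h), (j): one must confirm that ``$\vdash_{\mathrm{ILM}}\varphi$'' for a formula $\varphi$ not of the form $\psi_1\leftrightarrow\psi_2$ corresponds on the algebraic side to $v(\varphi)=1$ for all $v$, and then check that Proposition~\ref{prop1-ccpBa}'s inequalities $p\leq q$ translate to $v(p\rightarrow q)=1$ via the \textit{rpc} property $a\rightarrow b=1 \Leftrightarrow a\leq b$. Once this bookkeeping is fixed, every item is a one-line invocation of soundness/completeness plus the corresponding clause of Proposition~\ref{prop1-ccpBa}.
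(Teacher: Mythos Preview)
Your approach is exactly the paper's: invoke Theorem~\ref{soundcomplete} to transfer the algebraic identities of Definition~\ref{ccpBa}, Observation~\ref{obs-ccpba}, and Proposition~\ref{prop1-ccpBa} into ILM-theorems (the paper additionally notes that (c) can be obtained syntactically from (A11) and (b), but your semantic route via Observation~\ref{obs-ccpba}(1) is equally valid). One small clerical slip: your listed correspondence for (d)--(j) should read Proposition~\ref{prop1-ccpBa}(3),(4),(5),(6),(7),(8),(9), not (3),(5),\ldots,(10)---there is no item (10), and (e) matches (4).
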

\begin{proof}
As $\neg a = a\rightarrow 0$ in any ${\it ccpBa}$, Theorem \ref{soundcomplete} gives (a). Observation \ref{obs-ccpba}(2) and Theorem \ref{soundcomplete} give (b). (c) is obtained using (A11) and (b).  Proposition \ref{prop1-ccpBa} (3)-(9) and Theorem \ref{soundcomplete} give (d)-(j) respectively.
\end{proof}

\begin{remark}
\label{negations}
\cite{HR} The formulas in (a) and (c) in the above Proposition are the defining conditions for the negations ${\neg}$ and ${\sim}$ to be intuitionistic and {\rm minimal} respectively. Intuitionistic negation can also be equivalently defined using formulas (A9) and (A10).
\end{remark}

Recall that for any two logics ${\rm L}$ and ${\rm L'}$ with sets of formulas $F$ and $F'$ respectively, 
${\rm L}$ is {\it embedded} in ${\rm L}'$ 
if there exists a map $r:F \rightarrow F'$ such that for any $\alpha \in F$, $\vdash_{{\rm L}} \alpha$ if and only if  $\vdash_{{\rm L'}} r(\alpha)$ \cite{HR}. In case $r$ is the inclusion map, ${\rm L'}$ is called an {\it extension} of ${\rm L}$. ${\rm L}$ is {\it equivalent} to ${\rm L'}$, denoted as ${\rm L} \cong {\rm L}'$, if the languages of ${\rm L}$ and ${\rm L'}$ are the same, that is $F=F'$, and $r$ is the identity map (cf. \cite{odintsov2008}). Another way to show equivalence between two logics over the same language is by comparing their corresponding classes of algebras: ${\rm L}$ and ${\rm L}'$ are equivalent if every ${\rm L}$-algebra is isomorphic to some ${\rm L}'$-algebra and conversely \cite{HR}. It is shown in \cite{HR} that both the definitions of equivalence coincide. It is clear that

\begin{proposition} {\rm \cite{AM2}}
{\rm ML} and {\rm IL} are both embedded in {\rm ILM}.
\end{proposition}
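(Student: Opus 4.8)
The plan is to exhibit an explicit translation in each direction (for ML into ILM, and for IL into ILM) and then verify the embedding condition $\vdash_{\rm L}\alpha\iff\vdash_{\rm ILM}r(\alpha)$ using the algebraic semantics already available, namely the soundness–completeness result for ILM (Theorem~\ref{soundcomplete}) together with the corresponding soundness–completeness theorems for ML and IL with respect to \emph{cc} lattices and \emph{pBa}s respectively. For IL the situation is the simplest: since axioms (A1)--(A10) together with (MP) are exactly an axiomatization of IL in the language $\{\top,\bot,\vee,\wedge,\rightarrow,\neg\}$, and the language of ILM extends that of IL by the single connective ${\sim}$, I would take $r$ to be the inclusion map $F_{\rm IL}\hookrightarrow F$. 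Then $\vdash_{\rm IL}\alpha\Rightarrow\vdash_{\rm ILM}\alpha$ is immediate because every IL-axiom is an ILM-axiom; for the converse one uses that the reduct $(A,1,0,\vee,\wedge,\rightarrow,\neg)$ of any \emph{ccpBa} is a \emph{pBa}, so an ILM-proof of an $\{\neg\}$-only formula, read through the algebraic semantics, gives validity in every \emph{pBa}, hence $\vdash_{\rm IL}\alpha$ by completeness of IL. Thus IL is in fact an \emph{extension}-style embedding of ILM.

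\textbf{The ML case.} ML is axiomatized (in the style of \cite{HR}) by (A1)--(A9) plus (MP) in the language $\{\top,\bot,\vee,\wedge,\rightarrow,{\sim}\}$, where ${\sim}$ is the minimal negation. The natural translation $r:F_{\rm ML}\to F$ is again essentially an inclusion — but one must be careful, since the minimal negation of ML corresponds to the ${\sim}$ of ILM only once ${\sim}\top$ has been pinned down, and in an arbitrary \emph{cc} lattice ${\sim}1$ need not satisfy Observation~\ref{obs-ccpba}(2). The cleanest route is: given an ML-formula $\alpha$, let $r(\alpha)$ be $\alpha$ itself viewed in $\mathcal L$ (the ML-connectives are a subset of the ILM-connectives). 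For soundness of the embedding, $\vdash_{\rm ML}\alpha\Rightarrow\vdash_{\rm ILM}r(\alpha)$: by completeness of ML it suffices to check each ML-axiom is ILM-provable, and indeed (A1)--(A9) are among the ILM-axioms, while the only ML-specific behaviour of ${\sim}$, namely the schema ${\sim}\alpha\leftrightarrow(\alpha\rightarrow{\sim}\top)$, is available as Proposition~\ref{prop-ilm}(c). For the converse, $\vdash_{\rm ILM}r(\alpha)\Rightarrow\vdash_{\rm ML}\alpha$, one observes that if $\alpha$ uses only ML-connectives and is valid in every \emph{ccpBa}, then — using that the $\{\top,\bot,\vee,\wedge,\rightarrow,{\sim}\}$-reduct of every \emph{ccpBa} is a bounded \emph{cc} lattice — $\alpha$ is valid in every such reduct; combined with completeness of ML with respect to bounded \emph{cc} lattices (and the fact that every bounded \emph{cc} lattice validating the ML-axioms arises, up to the relevant semantic equivalence, as a reduct) one concludes $\vdash_{\rm ML}\alpha$.

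\textbf{The main obstacle.} The delicate point is the backward direction for ML: it requires that \emph{every} (bounded) \emph{cc} lattice, or at least enough of them to be complete for ML, occurs as the \emph{cc}-reduct of some \emph{ccpBa}. By Proposition~\ref{prop3-ccpBa} a \emph{pBa}-with-${\sim}$ is a \emph{ccpBa} iff $\neg\neg{\sim}1={\sim}1$, and by Example~\ref{eg4-ccpBa} not every bounded \emph{cc} lattice extends to a \emph{ccpBa}; so one cannot naively embed arbitrary \emph{cc} lattices. The resolution — which I expect the authors to use and which I would follow — is to exploit that ML is complete already with respect to \emph{cc} lattices in which ${\sim}1$ \emph{is} the bottom, or more generally with respect to \emph{cc} lattices whose $\{\top,\vee,\wedge,\rightarrow\}$-reduct is a \emph{pBa} with $\neg\neg{\sim}1={\sim}1$ (one can always pass to such a countermodel, e.g. the Lindenbaum–Tarski \emph{cc} lattice of ML, which has ${\sim}\top$-image below everything relevant, or adjoin a bottom if missing). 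Once that semantic fact is in hand, the embedding argument closes routinely via Theorem~\ref{soundcomplete}. I would present the IL-embedding first (a one-line inclusion argument), then the ML-embedding, flagging the reduct-to-\emph{ccpBa} passage as the only non-formal step and citing \cite{HR,AM2} for it.
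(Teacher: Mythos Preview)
Your treatment of the IL case is essentially right, with one small sharpening: from ``the $\neg$-reduct of every \emph{ccpBa} is a \emph{pBa}'' you only get validity of $\alpha$ in those \emph{pBa}s arising as reducts. To conclude validity in \emph{every} \emph{pBa} you need the (easy) converse: every \emph{pBa} extends to a \emph{ccpBa} by setting ${\sim}:=\neg$ (then ${\sim}1=0$ and $\neg\neg 0=0$). With that, the inclusion embedding for IL goes through; the paper itself offers no argument beyond ``It is clear,'' so there is nothing further to compare.

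Your ML argument, however, has a genuine gap, and your proposed resolution is false. The inclusion map cannot be an embedding of ML into ILM: take $\alpha={\sim}{\sim}({\sim}\top\rightarrow\beta)$, i.e.\ $(\mathrm{P}')$. This is an ML-formula, and by Proposition~\ref{prop-ilm}(d) it is an ILM-theorem, yet it is \emph{not} an ML-theorem --- it is precisely the axiom that separates $\mathrm{JP}'$ from ML (Section~\ref{sec-JP'}). So $\vdash_{\rm ILM} r(\alpha)$ while $\nvdash_{\rm ML}\alpha$, and the ``only if'' direction fails. Your fix, that ML is already complete with respect to \emph{cc} lattices satisfying $\neg\neg{\sim}1={\sim}1$, cannot hold: by Proposition~\ref{prop3-ccpBa} those bounded \emph{cc} lattices are exactly the \emph{cc}-reducts of \emph{ccpBa}s, and they all validate $(\mathrm{P}')$; completeness of ML for this class would force $\mathrm{ML}=\mathrm{JP}'$.

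The embedding that does work (and is the standard device behind the ``clear'') is not inclusion but the translation that eliminates ${\sim}$ in favour of a fresh variable: fix $q\notin\mathrm{Var}(\alpha)$ and let $r$ commute with $\top,\vee,\wedge,\rightarrow$ and send ${\sim}\beta$ to $r(\beta)\rightarrow q$. Then $r(\alpha)$ lies in the positive (indeed IL) fragment of $\mathcal{L}$, and one checks $\vdash_{\rm ML}\alpha\Leftrightarrow\vdash_{\rm ILM} r(\alpha)$ by matching valuations: given a \emph{cc} lattice and valuation $v$, interpret $q$ as ${\sim}1$ in the underlying \emph{rpc} lattice; conversely, given a \emph{ccpBa} and valuation $v$, equip the underlying \emph{rpc} lattice with the \emph{cc} negation determined by ${\sim}1:=v(q)$. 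This avoids the obstruction entirely, since $r(\alpha)$ never mentions ILM's ${\sim}$ and so the condition $\neg\neg{\sim}1={\sim}1$ plays no role.
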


\noindent Comparison of two logics is also done by giving an `interpretation' between them \cite{PM}. We use a more general definition of interpretation than that given in \cite{PM}:
a map $r:F \rightarrow F'$ is  an {\it interpretation of ${\rm L}$  in ${\rm L}'$ with respect to derivability}, if for any set $\Gamma \cup \{\alpha\} \subseteq F$, we have $\Gamma \vdash_{{\rm L}} \alpha \textrm{ if and only if } r(\Gamma) \cup \Delta_{\alpha} \vdash_{{\rm L}'} r(\alpha)$,
where  $\Delta_{\alpha} \subseteq F'$ is a finite set corresponding to $\alpha$. This yields

\begin{proposition} {\rm \cite{AM2}}
There exist  interpretations
of $\mathrm{ILM}$ in $\mathrm{IL}$ and in $\mathrm{ML}$.
\end{proposition}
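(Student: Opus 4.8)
The idea is to use a single translation for both interpretations, built on the remark after Definition~\ref{ccpBa} and Observation~\ref{obs-ccpba} that a {\it ccpBa} is exactly a {\it pBa} together with a distinguished $\neg\neg$-fixed element ${\sim}1$, so that the genuinely ``new'' datum of ${\rm ILM}$ over ${\rm IL}$ is just the constant ${\sim}\top$, whose only property is $\neg\neg{\sim}\top={\sim}\top$. Accordingly, I would fix a propositional variable $q$ outside the alphabet of ${\rm ILM}$, take $F'$ to be the set of ${\rm IL}$- (resp.\ ${\rm ML}$-) formulas over ${\rm PV}\cup\{q\}$, and define $r\colon F\to F'$ to commute with $\top,\bot,\wedge,\vee,\rightarrow$ and $\neg$ and to put $r({\sim}\alpha):=r(\alpha)\rightarrow\neg\neg q$. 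Using the \emph{double} negation $\neg\neg q$ rather than $q$ itself is what makes $r({\sim}\top)$ a provably $\neg\neg$-fixed formula. Since the deduction theorem holds for ${\rm ILM}$, ${\rm IL}$ and ${\rm ML}$ and all derivations are finitary, in every case it suffices to prove the required equivalence for provability of single formulas, which is the form in which I would argue.

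\smallskip
\noindent\textit{The interpretation in} ${\rm IL}$ (with $\Delta_\alpha=\emptyset$). I would prove $\vdash_{\rm ILM}\alpha\iff\vdash_{\rm IL}r(\alpha)$. For ``$\Rightarrow$'', induct on an ${\rm ILM}$-proof: $r$ is a homomorphism of formula algebras fixing $\neg$, so it sends (A1)--(A10) to instances of the corresponding ${\rm IL}$-axioms and commutes with {\rm MP}; the only real computation is (A11), whose translation $\bigl(r(\alpha)\rightarrow\neg\neg q\bigr)\leftrightarrow\bigl(r(\alpha)\rightarrow\neg\neg(\top\rightarrow\neg\neg q)\bigr)$ is an ${\rm IL}$-theorem because $\vdash_{\rm IL}(\top\rightarrow\beta)\leftrightarrow\beta$ and $\vdash_{\rm IL}\neg\neg\neg\neg q\leftrightarrow\neg\neg q$. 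For ``$\Leftarrow$'' I would use the algebraic semantics (Theorem~\ref{soundcomplete}) contrapositively: if $\not\vdash_{\rm ILM}\alpha$, choose a {\it ccpBa} $\mathcal A=(A,1,0,\vee,\wedge,\rightarrow,\neg,{\sim})$ and a valuation $v$ with $v(\alpha)\neq1$, and on the {\it pBa}-reduct $(A,1,0,\vee,\wedge,\rightarrow,\neg)$ define the ${\rm IL}$-valuation $v'$ agreeing with $v$ on ${\rm PV}$ and with $v'(q):={\sim}1$. One then checks $v'\circ r=v$ on $F$ by induction; the only non-routine clause is $v'\bigl(r({\sim}\gamma)\bigr)=v(\gamma)\rightarrow\neg\neg({\sim}1)=v(\gamma)\rightarrow{\sim}1=v({\sim}\gamma)$, using Observation~\ref{obs-ccpba}(2) for the middle step and Observation~\ref{obs-ccpba}(1) for the last. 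Hence $v'(r(\alpha))\neq1$ in a {\it pBa}, so $\not\vdash_{\rm IL}r(\alpha)$ by completeness of ${\rm IL}$ for {\it pBa}s.

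\smallskip
\noindent\textit{The interpretation in} ${\rm ML}$. Keep the same $r$ (so now $r(\neg\gamma)=r(\gamma)\rightarrow\bot$ is only a \emph{minimal} negation of ${\rm ML}$), and take $\Delta_\alpha:=\{\,\bot\rightarrow p : p\in{\rm PV}(\alpha)\cup\{q\}\,\}$; this is precisely the composite of the interpretation above with the classical interpretation of ${\rm IL}$ inside ${\rm ML}$ which fixes formulas and adds the {\it ex falso} instances, and a composite of interpretations is again an interpretation. For ``$\Leftarrow$'' the preceding semantic argument carries over once one makes the {\it pBa}-reduct of the falsifying {\it ccpBa} into an ${\rm ML}$-model by declaring $0$ to be its falsum: then every member of $\Delta_\alpha$ evaluates to $1$ (as $0\le v'(p)$) and $v'\circ r=v$ as before. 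For ``$\Rightarrow$'' one invokes the known fact that $\Gamma\vdash_{\rm IL}\varphi$ iff $\Gamma\cup\{\bot\rightarrow p:p\in{\rm PV}(\varphi)\}\vdash_{\rm ML}\varphi$: an ${\rm IL}$-derivation can be put in a normal form whose applications of {\it ex falso} have only atomic subformulas of the conclusion as conclusions, so the finitely many hypotheses in $\Delta_\alpha$ suffice to recover, inside ${\rm ML}$, the intuitionistic behaviour of $r(\neg)$ at every step.

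\smallskip
\noindent\textit{The main obstacle.} I expect the one nontrivial step to be precisely this last point in the ${\rm ML}$ case: showing that a finite set $\Delta_\alpha$ depending on $\alpha$ alone suffices to simulate the full strength of intuitionistic negation used throughout an ${\rm ILM}$-derivation, despite the appearance at intermediate steps of instances of (A8) and (A10) with auxiliary variables absent from $\alpha$. The clean way around this is the normalization argument sketched above (or, equivalently, running the ``$\Rightarrow$'' direction of the ${\rm ML}$ case entirely through Theorem~\ref{soundcomplete} and completeness of ${\rm ML}$), and it is the standard phenomenon in passing between ${\rm IL}$ and ${\rm ML}$. By comparison the ${\rm IL}$ case is essentially immediate once $r$ is chosen correctly, the whole content being the identity $v'\circ r=v$, whose ${\sim}$-clause is exactly where Condition~(2) of Observation~\ref{obs-ccpba} --- the $\neg\neg$-fixedness of ${\sim}1$ --- is indispensable.
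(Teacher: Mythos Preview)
The paper gives no proof of this proposition (it is simply cited from \cite{AM2}), so there is nothing to compare against directly; I comment on your proposal on its own merits.

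Your interpretation of ${\rm ILM}$ in ${\rm IL}$ is correct and clean: the choice $r({\sim}\alpha):=r(\alpha)\to\neg\neg q$ is exactly right, the induction on proofs for ``$\Rightarrow$'' goes through, and the semantic argument for ``$\Leftarrow$'' via $v'(q):={\sim}1$ and $v'\circ r=v$ works as you describe, the key step being Observation~\ref{obs-ccpba}(2). The reduction of the $\Gamma$-version to single formulas via DT is also fine since $r$ commutes with $\wedge$ and $\to$.

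The ${\rm ML}$ case, however, has a genuine gap. First, a language issue: in this paper ${\rm ML}$ is formulated over $\mathcal L'$, which has no constant $\bot$; ``keeping the same $r$'' is therefore not literally possible. This is easily repaired---send $\bot$ either to ${\rm ML}$'s ${\sim}\top$ or to a second fresh variable---but you should say which, since the shape of $\Delta_\alpha$ and of the verification depends on the choice. Second, and more substantively, your ``$\Rightarrow$'' direction is not established. The normalization claim you invoke (that an ${\rm IL}$-proof of $r(\alpha)$ can be rearranged so that \emph{ex falso} is used only at atoms of $r(\alpha)$) is not standard in this form and you only gesture at it; and the alternative ``run it through Theorem~\ref{soundcomplete} and ${\rm ML}$-completeness'' does not follow automatically, because a {\it cc} lattice refuting $r(\alpha)$ under $\Delta_\alpha$ need not be a {\it pBa} (it may lack a bottom), so you cannot directly invoke validity of $\alpha$ in {\it ccpBa}s. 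What is missing is a construction: from a {\it cc} lattice $\mathcal A$ and a valuation $v'$ satisfying $\Delta_\alpha$, pass to the principal filter $[{\sim}1,1]$, which is a bounded {\it rpc} sublattice and hence a {\it pBa} with bottom ${\sim}1$; on it define the second negation via ${\sim}{\sim}v'(q)$ (which is $\neg\neg$-fixed since ${\sim}{\sim}{\sim}{\sim}a={\sim}{\sim}a$), obtaining a {\it ccpBa}; then check that the induced valuation agrees with $v'\circ r$ on $\alpha$. With this filter construction inserted, your semantic route for ``$\Rightarrow$'' goes through and the ${\rm ML}$ interpretation is complete.
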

 
As observed in case of {\it ccpBa}s  (Observation \ref{obs-ccpba}), we get an equivalent axiomatization of {\rm ILM} as follows.
\begin{theorem}[The logic $\mathbf{ILM_1}$] 
\label{ILM-ILM1}
Consider a logic ${\rm ILM_1}$ over the language $\mathcal{L}$, with axioms {\rm (A1)-(A10)}, ${\sim} \alpha \leftrightarrow (\alpha \rightarrow {\sim} \top)$, and $\neg \neg {\sim} \top \leftrightarrow {\sim} \top$. {\rm MP} is the only rule of inference. Then ${\rm ILM} \cong {\rm ILM_1}$.
\end{theorem}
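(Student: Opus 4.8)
The plan is to prove $\mathrm{ILM}\cong\mathrm{ILM_1}$ directly at the syntactic level. By the definition of $\cong$ recalled just before the statement, since $\mathrm{ILM}$ and $\mathrm{ILM_1}$ are formulated over the same language $\mathcal{L}$, it suffices to show that they have the same theorems, i.e.\ that $\vdash_{\mathrm{ILM}}\alpha$ iff $\vdash_{\mathrm{ILM_1}}\alpha$ for every $\alpha\in F$, with $r$ the identity map. As both systems have $\mathrm{MP}$ as their only rule of inference and axioms (A1)--(A10) in common, a straightforward induction on the length of derivations reduces this to two claims: (i) the $\mathrm{ILM}$-axiom (A11) is a theorem of $\mathrm{ILM_1}$, and (ii) the two extra axioms ${\sim}\alpha\leftrightarrow(\alpha\rightarrow{\sim}\top)$ and $\neg\neg{\sim}\top\leftrightarrow{\sim}\top$ of $\mathrm{ILM_1}$ are theorems of $\mathrm{ILM}$.

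For (i), I would first record the standard facts available already in positive logic (axioms (A1)--(A7)), hence in $\mathrm{ILM_1}$: that $\vdash\top$, that $(\top\rightarrow\beta)\leftrightarrow\beta$, that $\leftrightarrow$ induces an equivalence relation on formulas modulo $\vdash$, and the replacement property that from $\vdash\beta\leftrightarrow\gamma$ one may infer $\vdash(\alpha\rightarrow\beta)\leftrightarrow(\alpha\rightarrow\gamma)$ (and likewise in the antecedent and under the other connectives). Granting these, (A11) follows: from $\neg\neg{\sim}\top\leftrightarrow{\sim}\top$ and replacement in the consequent we get $(\alpha\rightarrow{\sim}\top)\leftrightarrow(\alpha\rightarrow\neg\neg{\sim}\top)$, and chaining this with ${\sim}\alpha\leftrightarrow(\alpha\rightarrow{\sim}\top)$ yields ${\sim}\alpha\leftrightarrow(\alpha\rightarrow\neg\neg{\sim}\top)$, which is (A11).

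For (ii), the two required $\mathrm{ILM}$-theorems are exactly Proposition \ref{prop-ilm}(b), namely $\vdash_{\mathrm{ILM}}\neg\neg{\sim}\top\leftrightarrow{\sim}\top$, and Proposition \ref{prop-ilm}(c), namely $\vdash_{\mathrm{ILM}}{\sim}\alpha\leftrightarrow(\alpha\rightarrow{\sim}\top)$, so nothing new is needed; alternatively one can re-derive them in a couple of lines by instantiating (A11) at $\alpha:=\top$ and using $(\top\rightarrow\beta)\leftrightarrow\beta$ together with replacement, which is essentially the syntactic counterpart of Observation \ref{obs-ccpba}.

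With (i) and (ii) in hand, the closing induction is routine: if $\vdash_{\mathrm{ILM}}\alpha$, replace each use of (A11) by its $\mathrm{ILM_1}$-derivation (the other axioms being common and $\mathrm{MP}$ shared) to obtain $\vdash_{\mathrm{ILM_1}}\alpha$; conversely each instance of ${\sim}\alpha\leftrightarrow(\alpha\rightarrow{\sim}\top)$ or $\neg\neg{\sim}\top\leftrightarrow{\sim}\top$ occurring in an $\mathrm{ILM_1}$-proof is replaced by its $\mathrm{ILM}$-derivation, giving $\vdash_{\mathrm{ILM}}\alpha$. Hence the identity map witnesses $\mathrm{ILM}\cong\mathrm{ILM_1}$. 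A purely algebraic alternative would be to invoke Theorem \ref{soundcomplete} together with an analogous soundness--completeness of $\mathrm{ILM_1}$ with respect to its algebras, and then note via Observation \ref{obs-ccpba} that the classes of $\mathrm{ILM}$- and $\mathrm{ILM_1}$-algebras literally coincide; but this would require separately setting up completeness for $\mathrm{ILM_1}$, so the syntactic route above is preferable. The only point that demands any care is the bookkeeping of the positive-logic replacement lemma and the behaviour of $\top$; once those are in place, both directions are immediate.
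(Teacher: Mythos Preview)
Your proposal is correct and follows essentially the same approach as the paper: you show (A11) is derivable in $\mathrm{ILM_1}$ from its two extra axioms via replacement, and you invoke Proposition~\ref{prop-ilm}(b),(c) for the converse direction, which is exactly what the paper does (in considerably less detail). The additional bookkeeping you spell out---the replacement lemma, the induction on derivations, the remarks on $\top$---is implicit in the paper's terse proof, so there is no substantive difference.
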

\begin{proof}
Axioms ${\sim} \alpha \leftrightarrow (\alpha \rightarrow {\sim} \top)$ and $\neg \neg {\sim} \top \leftrightarrow {\sim} \top$  of ${\rm ILM_1}$ imply that (A11) is an ${\rm ILM_1}$-theorem. On the other hand, we have $\vdash_{{\rm ILM}} \neg \neg {\sim} \top \leftrightarrow {\sim} \top$ and $\vdash_{{\rm ILM}} {\sim} \alpha \leftrightarrow (\alpha \rightarrow {\sim} \top)$ (Proposition \ref{prop-ilm}(b) and (c)).
\end{proof}
\noindent We shall use this equivalent version ${\rm ILM_1}$ of ILM while discussing the two relational semantics for ILM in Sections \ref{sec-KripkeILM} and \ref{subsec-KripkeILM}.

\subsubsection{Comparison of \textrm{ILM} with Peirce's logic \texorpdfstring{$\textrm{JP}'$}~}
\label{sec-JP'}

Consider the language $\mathcal{L}'$ with formulas given by the scheme:
\[  \top \mid p \mid \alpha \vee \beta \mid \alpha \wedge \beta \mid \alpha \rightarrow \beta \mid {\sim} \alpha \]
The axioms of {\rm ML} are given as (A1)-(A7) of {\rm ILM} and
\begin{enumerate}[label={{\rm (A}$\arabic*)$}, leftmargin=2 \parindent, noitemsep]
\setcounter{enumi}{12}
\item $(\alpha \rightarrow \beta) \rightarrow ((\alpha \rightarrow {\sim} \beta) \rightarrow {\sim} \alpha)$, \hfill (${\sim}$ reductio ad absurdum)
\end{enumerate}
and {\rm MP} as the rule of inference. In \cite{Segerberg1968}, Segerberg defined various extensions of {\rm ML}. One of these is the system ${\rm JP}$, which is obtained by adding Peirce's law $({\rm P})$ as an axiom to {\rm ML}:
\[ {\rm (P)} \quad ((\alpha \rightarrow \beta) \rightarrow \alpha) \rightarrow \alpha \]
Let us consider the logic ${\rm JP'}$ defined as below.

\begin{definition}[$\bm{{\rm JP'}}$ \cite{Segerberg1968}]
\label{JP'}
The language of ${\rm JP'}$ is $\mathcal{L}'$. The axioms are $({\rm A}1)-({\rm A}7)$, $({\rm A}13)$ and
\[ {\rm (P')} \quad  {\sim} {\sim} ({\sim} \top \rightarrow \beta). \]
{\rm MP} is the only rule of inference.
\end{definition}

\noindent One may remark here that in \cite{odintsov2008,Segerberg1968}, while defining ${\rm (P')}$, a propositional constant `$\bot$' that is logically equivalent to ${\sim}\top$ is used in place of ${\sim}\top$.  We have used ${\sim}\top$ instead, in order not to confuse with the propositional  constant $\bot$ that is already present in ILM, and which is {\it not} logically equivalent to ${\sim}\top$ -- as is established by the proof of Theorem \ref{prop-JP'ILM} below. 

Using the ML-theorem $\vdash_{\rm ML} {\sim} \alpha \leftrightarrow (\alpha \rightarrow {\sim} \top)$, one observes that ${\rm (P')}$ is logically equivalent to the following formula (which we also refer to as ${\rm (P')}$)
\[(({\sim} \top \rightarrow \beta) \rightarrow {\sim} \top) \rightarrow {\sim} \top.\]
Thus, $({\rm P}')$ may be regarded as a special case of $({\rm P})$ where $\alpha$ is ${\sim} \top$. Let us now compare the logics ${\rm JP}'$ and ${\rm ILM}$. 
\begin{theorem}
\label{thm-JP}
{\rm ILM} is an extension of ${\rm JP'}$.
\end{theorem}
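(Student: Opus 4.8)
The goal is to show that ILM is an extension of ${\rm JP'}$ in the sense defined just before Proposition 3.6: the language $\mathcal{L}'$ of ${\rm JP'}$ is (a sublanguage that embeds into) $\mathcal{L}$ via the inclusion map, and every ${\rm JP'}$-theorem is an ILM-theorem. So the plan is to take each axiom of ${\rm JP'}$, namely $({\rm A}1)$--$({\rm A}7)$, $({\rm A}13)$ and $({\rm P'})$, and verify that it is derivable in ILM; since the only inference rule in both systems is MP, this suffices to conclude $\vdash_{\rm JP'}\alpha \Rightarrow \vdash_{\rm ILM}\alpha$ for every $\alpha \in \mathcal{L}'$.

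First I would dispense with $({\rm A}1)$--$({\rm A}7)$: these are literally axioms of ILM (Definition 3.2), so there is nothing to do. Next, $({\rm A}13)$, the ${\sim}$-reductio schema $(\alpha \rightarrow \beta) \rightarrow ((\alpha \rightarrow {\sim}\beta) \rightarrow {\sim}\alpha)$. Here I would use Proposition 3.5(c), $\vdash_{\rm ILM} {\sim}\gamma \leftrightarrow (\gamma \rightarrow {\sim}\top)$, to rewrite ${\sim}\beta$ as $\beta \rightarrow {\sim}\top$ and ${\sim}\alpha$ as $\alpha \rightarrow {\sim}\top$; then $({\rm A}13)$ reduces to a purely positive-logic statement of the form $(\alpha\rightarrow\beta)\rightarrow((\alpha\rightarrow(\beta\rightarrow{\sim}\top))\rightarrow(\alpha\rightarrow{\sim}\top))$, which follows from $({\rm A}2)$ (after suitable substitution) together with standard positive-logic manipulations available in ILM. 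Equivalently, one can simply note that the $\{\rightarrow,\vee,\wedge,\top,{\sim}\}$-fragment of ILM is exactly ML (ML is embedded in ILM), and $({\rm A}13)$ is an ML-axiom. Finally, $({\rm P'})$, i.e. ${\sim}{\sim}({\sim}\top\rightarrow\beta)$: this is precisely Proposition 3.5(d), $\vdash_{\rm ILM}{\sim}{\sim}({\sim}\top\rightarrow\alpha)$ — which in turn came from Proposition 2.4(3) via soundness. So every ${\rm JP'}$-axiom is an ILM-theorem, and we are done.

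The step requiring the most care is making precise the sense in which ``extension'' is meant, since strictly speaking $\mathcal{L}'$ lacks $\bot$ and $\neg$ while $\mathcal{L}$ has them. I would address this by observing that $\mathcal{L}' \subseteq \mathcal{L}$ syntactically (every $\mathcal{L}'$-formula is an $\mathcal{L}$-formula), so the inclusion $r:\mathcal{L}'\hookrightarrow\mathcal{L}$ is the relevant translation, and the argument above shows $\vdash_{\rm JP'}\alpha$ implies $\vdash_{\rm ILM}r(\alpha)=\alpha$; this is exactly what ``${\rm ILM}$ is an extension of ${\rm JP'}$'' requires in the terminology of the paper. (One need not — and, as the remark preceding the theorem emphasizes, should not — identify ${\sim}\top$ with $\bot$; the whole point is that they are logically distinct in ILM, which is why ILM is a genuine, non-conservative extension, but that non-conservativity is not needed for the statement at hand, only for the ``not equivalent'' part discussed in Theorem 3.8.) The main obstacle, such as it is, is simply bookkeeping: confirming that the positive-logic and ML-fragment reasoning used to derive $({\rm A}13)$ inside ILM does not secretly invoke $\bot$ or $\neg$, which it does not, since ML is embedded in ILM via the identity on the common sublanguage.
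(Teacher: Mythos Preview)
Your proposal is correct and follows essentially the same approach as the paper: (A1)--(A7) are common axioms, $({\rm P}')$ is Proposition~3.5(d), and (A13) is derived by unfolding ${\sim}$ via the implication form and reducing to an instance of (A2) handled by MP and the Deduction Theorem. The only cosmetic difference is that the paper unfolds ${\sim}\gamma$ directly via axiom (A11) to $\gamma\rightarrow\neg\neg{\sim}\top$, whereas you go through Proposition~3.5(c) to $\gamma\rightarrow{\sim}\top$; since $\neg\neg{\sim}\top\leftrightarrow{\sim}\top$ in ILM these are interchangeable, and your additional remarks on the language inclusion are a helpful clarification that the paper leaves implicit.
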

\begin{proof} ${\rm (P')}$ is a theorem in {\rm ILM} (Proposition \ref{prop-ilm}(d)). Axioms (A1)-(A7) are common to ${\rm JP}'$ and ${\rm ILM}$. Let us see the proof of (A13) in ${\rm ILM}$. Given (A11), (A13) can be equivalently expressed as $(\alpha \rightarrow \beta) \rightarrow ((\alpha \rightarrow (\beta\rightarrow \neg \neg {\sim} \top)) \rightarrow (\alpha\rightarrow \neg \neg {\sim} \top)).$
Using MP, one can derive $\{\alpha \rightarrow \beta, \alpha \rightarrow (\beta\rightarrow \neg \neg {\sim} \top), \alpha\} \vdash_{{\rm ILM}} \neg \neg {\sim} \top.$
Applying DT, we obtain the above equivalent expression for (A13)  as a theorem in {\rm ILM}.
\end{proof}

\noindent We shall show in Theorem \ref{prop-JP'ILM} below that ${\rm JP}'$ is however, {\it not equivalent to} ${\rm ILM}$.  This is established  by comparing the classes of algebras corresponding to the respective logics. 
Theorem \ref{soundcomplete} implies that any ${\rm ILM}$-algebra is just  a {\it ccpBa}. For ${\rm JP'}$, the corresponding class of algebras is given by  

\begin{definition}[$\mathbf{JP'}$-algebra]
A ${\it JP'}$-algebra $(A,1,\vee,\wedge,\rightarrow,{\sim})$ is a ${\it cc}$ lattice satisfying ${\sim}{\sim}({\sim} 1 \rightarrow a) =1$ for all $a \in A$.
\end{definition}
\noindent Note that ${\rm JP'}$-algebra is just the structure mentioned in (3) of  Proposition \ref{prop3-ccpBa}.

\begin{theorem}
\label{prop-JP'ILM}
${\rm JP}'$ is not equivalent to ${\rm ILM}$. 
\end{theorem}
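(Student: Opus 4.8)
The plan is to show that the classes of algebras for the two logics cannot be put in the bijective-up-to-isomorphism correspondence that characterises logical equivalence (as recalled in the discussion before Proposition 4.4 of the excerpt, citing \cite{HR}). By Theorem \ref{soundcomplete} an $\mathrm{ILM}$-algebra is precisely a \textit{ccpBa}, and a $\mathrm{JP}'$-algebra is the structure in Proposition \ref{prop3-ccpBa}(3), namely a \textit{cc} lattice with ${\sim}{\sim}({\sim}1\rightarrow a)=1$ for all $a$. The essential point is that a \textit{ccpBa} carries a \emph{bottom element} $0$ with $\neg a=a\rightarrow 0$ in its signature, whereas a $\mathrm{JP}'$-algebra need carry no bottom element and no intuitionistic negation. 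So I would exhibit a $\mathrm{JP}'$-algebra that is not (term-equivalent to) any \textit{ccpBa}.

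First I would recall Example \ref{eg4-ccpBa}: the linear lattice $L$ of all non-positive integers with the implication $a\rightarrow b:=b$ if $b<a$ and $a\rightarrow b:=0$ otherwise, and ${\sim}a:=a\rightarrow 0$. The excerpt already verifies that $(L,0,\vee,\wedge,\rightarrow,{\sim})$ satisfies condition (3) of Proposition \ref{prop3-ccpBa}, i.e.\ it is a \textit{cc} lattice with ${\sim}{\sim}({\sim}1\rightarrow a)=1$, hence a $\mathrm{JP}'$-algebra; and it also observes that $(L,\leq)$ has no lower bound, so the reduct cannot be extended to a \textit{pBa}, hence the \textit{cc} lattice cannot be extended to a \textit{ccpBa}. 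The remaining gap — and this is the main obstacle — is that ``cannot be \emph{extended} to a \textit{ccpBa}'' is slightly weaker than ``not isomorphic to the $\{\rightarrow,{\sim}\}$-reduct of any \textit{ccpBa}'': equivalence of logics over different languages requires every $\mathrm{JP}'$-algebra to be isomorphic to \emph{some} \textit{ccpBa} after forgetting the extra operations, not that the given algebra literally expand. I would close this gap by noting that in any \textit{ccpBa} $\mathcal{A}=(A,1,0,\vee,\wedge,\rightarrow,\neg,{\sim})$ the element $0$ is the least element of the lattice $(A,\leq)$ (it is the bottom of the bounded \textit{rpc} reduct), and the lattice order is recovered from $\rightarrow$ alone via $a\leq b\iff a\rightarrow b=1$; therefore the $\{1,\vee,\wedge,\rightarrow,{\sim}\}$-reduct of any \textit{ccpBa} has a least element. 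Since $L$ has no least element, $L$ is not isomorphic to the corresponding reduct of any \textit{ccpBa}, and this $\mathrm{JP}'$-algebra has no \textit{ccpBa} counterpart.

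Putting it together: suppose for contradiction that $\mathrm{JP}'\cong\mathrm{ILM}$. Since the two logics have different languages ($\mathcal{L}'$ versus $\mathcal{L}$), the equivalence must be the algebraic one, so every $\mathrm{JP}'$-algebra is isomorphic to some $\mathrm{ILM}$-algebra — that is, to the $\{1,\vee,\wedge,\rightarrow,{\sim}\}$-reduct of some \textit{ccpBa} — and conversely. Applying this to the $\mathrm{JP}'$-algebra $L$ of Example \ref{eg4-ccpBa} contradicts the previous paragraph. Hence ${\rm JP}'$ is not equivalent to ${\rm ILM}$. I would also remark, in passing, that this simultaneously justifies the earlier claim that the constant $\bot$ of $\mathrm{ILM}$ is not logically equivalent to ${\sim}\top$: if it were, $\bot$ would be definable from the $\mathcal{L}'$-connectives and the two logics would collapse onto the same language, which the argument just rules out. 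The only care needed is to make sure the notion of $\mathrm{JP}'$-algebra in force is exactly the \textit{cc}-lattice one (no hidden bottom), which is guaranteed by the definition given just above the theorem.
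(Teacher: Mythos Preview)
Your proposal is correct and follows essentially the same route as the paper: both arguments pivot on Example \ref{eg4-ccpBa}, observing that this $\mathrm{JP}'$-algebra has no least element while any \textit{ccpBa} must have one, so equivalence (which would force $\bot$ to be definable in $\mathcal{L}'$ and hence force every $\mathrm{JP}'$-algebra to carry a bottom) fails. Your explicit remark that the lattice order is recoverable from $\rightarrow$ alone, closing the gap between ``cannot be extended'' and ``not isomorphic to a reduct'', is a point the paper leaves implicit.
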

\begin{proof}
Suppose we had the equivalence. Then, the languages of ${\rm JP}'$ and ${\rm ILM}$ would be the same, the axioms of ${\rm JP}'$ would be  theorems in {\rm ILM}, and the axioms of {\rm ILM}  theorems in ${\rm JP}'$. This would imply that we can define the connective $\neg$ and propositional constant $\bot$ in $\mathcal{L}'$ such that (A8), (A9) and (A10)  are theorems in ${\rm JP}'$. In particular, $\vdash_{\rm JP'} \bot \rightarrow \alpha$.\\
Now consider any ${\rm JP'}$-algebra $(A,1,\vee,\wedge,\rightarrow,{\sim})$. By soundness, any such algebra will have a bottom element $0$ and an operator $\neg$ such that $\neg a = a \rightarrow \neg 1$ for all $a \in A$. However, in Example \ref{eg4-ccpBa}, we have encountered a ${\rm JP}'$-algebra $(L,0,\vee,\wedge,\rightarrow,{\sim})$ that does not have a bottom element and thus can never be extended to a {\it ccpBa}.
\end{proof}


It is then expected that if we add a new propositional constant $\bot$ to the alphabet of $\mathcal{L}'$ and consider ${\rm JP}'$ enhanced with Axiom {\rm (A8)}, defining  $\neg$  as $\neg \alpha := \alpha \rightarrow \bot$, the resulting system will be equivalent to {\rm ILM}.

\begin{definition}[The logic $\mathbf{ILM}_2$]
\label{ILM2}
\index{ILM$_2$} The formulas of ${\rm ILM}_2$ are given by the scheme:
\[  \top \mid \bot \mid p \mid \alpha \wedge \beta \mid \alpha \vee \beta \mid \alpha \rightarrow \beta \mid {\sim} \alpha \]
Axioms are {\rm (A1)}-{\rm (A8)}, {\rm (A13)} and ${\rm (P')}$. {\rm MP} is the only rule of inference.
\end{definition}

\noindent We now show that ${\rm ILM}_2$ is equivalent to ILM. Note that standard results like DT can be proved in ${\rm ILM}_2$.

\begin{theorem}
\label{ILM-ILM2}
${\rm ILM} \cong{\rm ILM}_2$.
\end{theorem}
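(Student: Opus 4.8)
\textbf{Proof proposal for Theorem \ref{ILM-ILM2}.}

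The plan is to establish the equivalence by showing that each logic embeds in the other via the identity map (both languages over $\{\top,\bot,\vee,\wedge,\rightarrow,{\sim}\}$ are the same once we recall that in ${\rm ILM}$ we may treat $\neg\alpha$ as an abbreviation for $\alpha\rightarrow\bot$, by Proposition \ref{prop-ilm}(a)). Concretely, I would prove that every axiom of ${\rm ILM}_2$ is a theorem of ${\rm ILM}$, and every axiom of ${\rm ILM}$ (with $\neg$ read as $\cdot\rightarrow\bot$) is a theorem of ${\rm ILM}_2$. Since {\rm MP} is the only rule in both systems, this suffices for $\cong$. Alternatively, and perhaps more cleanly, one can argue via the algebraic route sanctioned in the excerpt: show that every ${\rm ILM}_2$-algebra is (term-equivalent to) a {\it ccpBa} and conversely, then invoke Theorem \ref{soundcomplete} together with the stated fact that the two notions of logical equivalence coincide.

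The first direction is easy: axioms {\rm (A1)}--{\rm (A8)} are literally among the axioms of ${\rm ILM}$; {\rm (A13)} is derived inside ${\rm ILM}$ exactly as in the proof of Theorem \ref{thm-JP} (using {\rm (A11)}, {\rm MP} and {\rm DT}); and ${\rm (P')}$ is Proposition \ref{prop-ilm}(d). So ${\rm ILM}_2$-theorems are ${\rm ILM}$-theorems. For the converse direction, the main work is to recover, inside ${\rm ILM}_2$, the ${\rm ILM}$-axioms {\rm (A9)}, {\rm (A10)} and {\rm (A11)}. Axioms {\rm (A9)} and {\rm (A10)} should follow from {\rm (A8)} together with positive logic {\rm (A1)}--{\rm (A7)} once $\neg\alpha$ is defined as $\alpha\rightarrow\bot$, since these are the standard intuitionistic facts about a connective defined from a constant implying everything; indeed {\rm (A10)}, $\neg\alpha\rightarrow(\alpha\rightarrow\beta)$, is immediate from {\rm (A8)} and positive logic, and {\rm (A9)}, $(\alpha\rightarrow\beta)\rightarrow((\alpha\rightarrow\neg\beta)\rightarrow\neg\alpha)$, unfolds to a positive-logic tautology about $\bot$. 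The crux is {\rm (A11)}: ${\sim}\alpha\leftrightarrow(\alpha\rightarrow\neg\neg{\sim}\top)$. Using {\rm (A13)} (i.e.\ the ${\sim}$-reductio axiom) one gets in ${\rm ML}\subseteq{\rm ILM}_2$ the theorem ${\sim}\alpha\leftrightarrow(\alpha\rightarrow{\sim}\top)$; so it remains to derive $\neg\neg{\sim}\top\leftrightarrow{\sim}\top$ in ${\rm ILM}_2$. The direction ${\sim}\top\rightarrow\neg\neg{\sim}\top$ is the positive-logic fact $a\leq\neg\neg a$. For the reverse, $\neg\neg{\sim}\top\rightarrow{\sim}\top$, I expect to use ${\rm (P')}$ in the form $((\,{\sim}\top\rightarrow\beta)\rightarrow{\sim}\top)\rightarrow{\sim}\top$ (instantiate $\beta:=\bot$, so ${\sim}\top\rightarrow\beta$ becomes $\neg{\sim}\top$, and then $(\neg{\sim}\top\rightarrow{\sim}\top)\rightarrow{\sim}\top$) combined with the positive-logic derivation of $\neg\neg{\sim}\top\rightarrow(\neg{\sim}\top\rightarrow{\sim}\top)$ (which is just {\rm (A10)}-style reasoning: from $\neg\neg{\sim}\top$ and $\neg{\sim}\top$ one gets $\bot$, hence ${\sim}\top$). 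Chaining these gives $\neg\neg{\sim}\top\rightarrow{\sim}\top$, completing {\rm (A11)}.

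The main obstacle, then, is precisely the derivation of $\neg\neg{\sim}\top\rightarrow{\sim}\top$ from ${\rm (P')}$ inside ${\rm ILM}_2$ — everything else is bookkeeping in positive and minimal logic. I would carry out the steps in the order: (1) note the languages coincide and fix the abbreviation $\neg\alpha:=\alpha\rightarrow\bot$; (2) check {\rm (A1)}--{\rm (A8)}, {\rm (A13)}, ${\rm (P')}$ are ${\rm ILM}$-theorems (citing Theorem \ref{thm-JP} and Proposition \ref{prop-ilm}(d)); (3) in the other direction derive {\rm (A9)}, {\rm (A10)} in ${\rm ILM}_2$ from {\rm (A8)} and positive logic; (4) derive ${\sim}\alpha\leftrightarrow(\alpha\rightarrow{\sim}\top)$ from {\rm (A13)}; (5) derive $\neg\neg{\sim}\top\leftrightarrow{\sim}\top$ using ${\rm (P')}$ with $\beta:=\bot$ as sketched; (6) combine (4) and (5) to obtain {\rm (A11)} in ${\rm ILM}_2$; (7) conclude that the identity map is a mutual embedding, hence ${\rm ILM}\cong{\rm ILM}_2$. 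I would also remark, as a sanity check consistent with Theorem \ref{prop-JP'ILM}, that $\bot$ and ${\sim}\top$ remain non-equivalent in ${\rm ILM}_2$, so the added constant genuinely enriches ${\rm JP}'$.
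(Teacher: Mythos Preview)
Your proposal is correct and follows essentially the same route as the paper: define $\neg\alpha:=\alpha\rightarrow\bot$, cite Theorem \ref{thm-JP} and Proposition \ref{prop-ilm}(d) for one direction, and for the other recover {\rm (A9)}, {\rm (A10)} from positive logic with {\rm (A8)}, then obtain {\rm (A11)} via ${\sim}\alpha\leftrightarrow(\alpha\rightarrow{\sim}\top)$ from {\rm (A13)} together with $\neg\neg{\sim}\top\leftrightarrow{\sim}\top$ from the instance of ${\rm (P')}$ at $\beta:=\bot$. The paper's listed auxiliary results (a)--(d) are exactly the ingredients you identify, with (d) being your key instantiation $((\,{\sim}\top\rightarrow\bot)\rightarrow{\sim}\top)\rightarrow{\sim}\top$.
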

\begin{proof}
Define $\neg \alpha := \alpha \rightarrow \bot$ in ${\rm ILM}_2$. As shown for Theorem \ref{thm-JP}, (A13) and (P$'$) are ${\rm ILM}$-theorems.\\
One can show (A9), (A10) and (A11) are ${\rm ILM}_2$-theorems, using following results.\\
(a) $\vdash_{{\rm ILM}_2} \beta \leftrightarrow (\top \rightarrow \beta)$.\hspace{30mm} (b) $\vdash_{{\rm ILM}_2} {\sim} \alpha \leftrightarrow (\alpha \rightarrow {\sim} \top)$.\\
(c) $\{\alpha, \neg \alpha \} \vdash_{{\rm ILM}_2} \bot$. \hspace{35mm} (d) $\vdash_{{\rm ILM}_2} (({\sim} \top \rightarrow \bot) \rightarrow {\sim} \top) \rightarrow {\sim} \top$.
\end{proof}


\section{Do{\v{s}en} semantics for ILM}
\label{sec-KripkeILM}

In this section, we refer to the relational semantics given by Do{\v{s}}en for minimal logic and its extensions \cite{Dosen1986,Dosen1999}. The class of `strictly condensed $J$-frames' and that of `strictly condensed $H$-frames' characterize {\rm ML} and {\rm IL} respectively {\rm \cite{Dosen1986}}, and these help us derive relational semantics for the logics ILM and ${\rm ILM}$-${\vee}$. Let us recall the definitions of these frames. Such a frame is essentially a structure based on a poset,  equipped  with a binary relation on the set that defines the semantics for negation.


\begin{definition}[Strictly Condensed \textit{J}-frame and \textit{H}-frame]
\label{J-frame} {\rm \cite{Dosen1986}}
$ $\\
A triple $\mathcal{F}:=(X,\leq,R_N)$ is a {\it strictly condensed $J$-frame}, if it satisfies the following.
\begin{enumerate}[label={{\rm (\arabic*)}}, leftmargin=1.5 \parindent, noitemsep, topsep=0pt]
\item $(X,\leq)$ is a poset.
\item $(\leq R_{N} \leq^{-1}) \subseteq R_{N}$.
\item $R_N$ is symmetric.
\item $\forall x,y \in X(x R_N y \Rightarrow \exists z \in X (x \leq z ~\&~ y \leq z ~\&~ x R_N z))$.
\end{enumerate}
\vskip 3pt 
$\mathcal{F}$  is a {\it strictly condensed  $H$-frame}  when, in addition to the above conditions, the relation $R_N$ is reflexive.
\end{definition}

\begin{remark}\label{Nframe}$ $\\
(1) In a strictly condensed  $H$-frame $\mathcal{F}$, reflexivity of $R_N$ implies its symmetry.\\
(2) $\mathcal{F}$ satisfying (1) and (2) is a {\it strictly condensed $N$-frame} \cite{Dosen1986}.
\end{remark}

\noindent `$\hat{N}$-frames', giving the semantics for ILM,  may now be defined.

\subsection{{\texorpdfstring{$\hat{N}$}~}-frames}
\label{dosen-ILM}

There are two negations  $\neg$ and ${\sim}$ in ${\rm ILM}$. Therefore, to define the semantics for each negation, the frame should have two binary relations, one (say, $R_{N_1}$) corresponding to the the negation $\neg$, and another (say, $R_{N_2}$) corresponding to the negation ${\sim}$. Recall Theorem \ref{ILM-ILM1} and Remark \ref{negations} (Section \ref{sec-embed}).

\begin{obs}$ $
\label{obs-Nhatframes}
{\rm 
\begin{enumerate}[label={($\arabic*$)}, leftmargin=1.4 \parindent, noitemsep, topsep=0pt]
\item As $\neg$ is an intuitionistic negation, $R_{N_1}$ must satisfy the properties corresponding to the relation $R_N$ in a strictly condensed  $H$-frame $\mathcal{F}:=$  $(X,\leq,R_N)$.
\item On the other hand, ${\sim}$ is a minimal negation. So $R_{N_2}$ must satisfy the properties corresponding to the relation $R_N$ in a strictly condensed $J$-frame $\mathcal{F}:=(X,\leq,R_N)$.
\item $\neg \neg {\sim} \top \leftrightarrow {\sim} \top$ is the sole axiom in ${\rm ILM}_1$ involving both the negations $\neg,\sim$. So there should be a property corresponding to the axiom that connects $R_{N_1}$ and $R_{N_2}$.
\item The logic ${\rm ILM}$-${\vee}$ has axiom (A12) for the negation ${\sim}$. Thus, there must be a condition on $R_{N_2}$ corresponding to this axiom in an ${\rm ILM}$-${\vee}$ frame.
\end{enumerate}
}
\end{obs}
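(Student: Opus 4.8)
The plan is to read this Observation as a transfer of Do\v{s}en's single-negation correspondences to the two separate negations of ${\rm ILM}$, organised around the equivalent axiomatisation ${\rm ILM}_1$ (Theorem \ref{ILM-ILM1}) together with the characterisations of $\neg$ and $\sim$ recorded in Proposition \ref{prop-ilm} and Remark \ref{negations}. The guiding principle is that in a Do\v{s}en-style frame each unary negation is interpreted by its own binary relation, with the conditions imposed on that relation mirroring the logical behaviour of the negation it interprets. Since ${\rm ILM}$ carries two negations, I would introduce two relations $R_{N_1}$ and $R_{N_2}$ and argue that each inherits exactly the frame conditions appropriate to the single-negation logic to which its negation belongs.

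For point (1), I would recall that $\neg$ is intuitionistic: by Proposition \ref{prop-ilm}(a) it satisfies $\neg \alpha \leftrightarrow (\alpha \rightarrow \bot)$, equivalently axioms (A9) and (A10), which by Remark \ref{negations} are precisely the defining conditions for intuitionistic negation. Since strictly condensed $H$-frames characterise ${\rm IL}$ \cite{Dosen1986}, the relation interpreting $\neg$ must satisfy exactly the conditions placed on $R_N$ in a strictly condensed $H$-frame (the clauses of Definition \ref{J-frame} together with reflexivity). Point (2) is the symmetric argument: by Proposition \ref{prop-ilm}(c), $\sim$ satisfies $\sim \alpha \leftrightarrow (\alpha \rightarrow \sim \top)$, the defining condition for a minimal negation (Remark \ref{negations}), so that, because strictly condensed $J$-frames characterise ${\rm ML}$, the relation $R_{N_2}$ must satisfy the $J$-frame conditions of Definition \ref{J-frame}, with reflexivity dropped.

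Points (3) and (4) isolate the genuinely new relational content. For point (3) I would inspect the axiom list of ${\rm ILM}_1$: axioms (A1)--(A8) contain no negation, the axioms (A9), (A10) and the defining equivalence $\sim \alpha \leftrightarrow (\alpha \rightarrow \sim \top)$ each involve a single negation, and only $\neg \neg \sim \top \leftrightarrow \sim \top$ mentions $\neg$ and $\sim$ together. Hence the $H$-conditions on $R_{N_1}$ and the $J$-conditions on $R_{N_2}$ in isolation cannot force this bridging axiom, and a separate frame condition linking the two relations is required. For point (4), the fact that ${\rm ILM}$-${\vee}$ is obtained from ${\rm ILM}$ by adjoining (A12), namely $\alpha \vee \sim \alpha$ -- a formula in which only $\sim$ occurs -- shows that the single additional frame condition needed should constrain $R_{N_2}$ alone.

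The main obstacle, and exactly the point this Observation defers, is the passage from ``there must be a condition'' to the explicit first-order condition itself. The transfers underlying points (1) and (2) are immediate once the single-negation characterisations are in hand, but determining which relational property on the pair $(R_{N_1}, R_{N_2})$ validates $\neg \neg \sim \top \leftrightarrow \sim \top$, and which property on $R_{N_2}$ validates (A12), cannot be read off from those transfers: it requires the actual truth-clauses for $\neg$ and $\sim$, supplied once the $\hat{N}$-frames and their forcing relation are defined. At the level of the Observation it therefore suffices to establish the qualitative claims -- that two relations are needed, that they inherit the $H$- and $J$-conditions respectively, and that the cross-negation axiom and axiom (A12) each demand one further condition, respectively linking the two relations and constraining $R_{N_2}$.
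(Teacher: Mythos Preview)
Your proposal is correct and matches the paper's own treatment: this Observation is not a theorem but a motivational analysis, and you justify each clause exactly as the paper intends---by invoking the $H$-frame/$J$-frame characterisations of ${\rm IL}$/${\rm ML}$ for the two negations separately, inspecting the ${\rm ILM}_1$ axiom list to isolate $\neg\neg{\sim}\top \leftrightarrow {\sim}\top$ as the sole bridging axiom, and noting that (A12) concerns only ${\sim}$. Your explicit acknowledgement that the concrete first-order conditions are deferred to the forthcoming definition of $\hat{N}$-frames is also precisely how the paper proceeds.
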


\noindent Let us now incorporate these points to define a new class of frames. 

%
%
%

\begin{definition}
\label{def-Nhatframes}\index{Nhatframe@$\hat{N}$-frame}
An {\it $\hat{N}$-frame} is a quadruple $\mathcal{F}:= (X, \leq ,R_{N_1},R_{N_2})$, satisfying the following conditions.
\begin{enumerate}[label={($\arabic*$)}, noitemsep, topsep=0pt]
\item $(X, \leq ,R_{N_1})$ is a strictly condensed  $H$-frame.
\item $(X, \leq ,R_{N_2})$ is a strictly condensed  $J$-frame.
\item $\forall x \in X ~ \bigl( \forall y \in X  \bigl(x R_{N_1} y \Rightarrow \exists z \in X  (y R_{N_1} z ~\&~ \forall z'\in X ~ (z \cancel{R}_{N_2} z')) \bigr) \\
~ \hfill \Rightarrow \forall z''\in X ~ (x \cancel{R}_{N_2} z'')\bigr)$.
\end{enumerate}
The class of all $\hat{N}$-frames is denoted by $\mathfrak{F}_1$.
\vskip 3pt
\noindent An {\it $\hat{N}'$-frame} is an $\hat{N}$-frame $\mathcal{F}$ satisfying
\begin{enumerate}[label={($\arabic*$)}, noitemsep, topsep=0pt]
\setcounter{enumi}{3}
\item $R_{N_2} \subseteq (\leq^{-1})$.
\end{enumerate}
\vskip 2pt
\noindent The class of all $\hat{N}'$-frames is denoted by $\mathfrak{F}_{1}'$.
\end{definition}

\noindent We shall see in the sequel that Condition (3) of Definition \ref{def-Nhatframes} corresponds to the ${\rm ILM}_1$-axiom  $\neg \neg {\sim} \top \leftrightarrow {\sim} \top$ (cf. Observation  \ref{obs-Nhatframes}(3)).
Moreover, the condition defining an $\hat{N}'$-frame, namely $R_{N_2} \subseteq (\leq^{-1})$ corresponds to the formula $\alpha \vee {\sim} \alpha$ \cite{Dosen1986}, and thus, is the required condition as mentioned in Observation \ref{obs-Nhatframes}(4).

Let us now give the  notions of valuation, truth and validity, which are introduced in the usual manner as defined in \cite{Dosen1986}.

For an $\hat{N}$-frame $\mathcal{F}:=(X,\leq,R_{N_1},R_{N_2})$, a map $v: PV \rightarrow \mathcal{P}(X)$ is called a {\it valuation} of $\mathcal{L}$ on $\mathcal{F}$ if $v(p)$ is an upset for each $p \in PV$. The pair $\mathcal{M} :=(\mathcal{F}, v)$ is called an $\hat{N}$-{\it model} on the $\hat{N}$-frame $\mathcal{F}$. 

\begin{definition}[Truth of a formula]
\label{ILM-truth}
\index{Nhatframe@$\hat{N}$-frame!truth}
The {\it truth of a formula} $\alpha \in F$ {\it at a world} $x \in X$ {\it in an $\hat{N}$-model} $\mathcal{M}$ 
(notation: $\mathcal{M},x \vDash \alpha$) is defined by extending the valuation $v: PV \rightarrow  \mathcal{P}(X)$ to the set $F$ of formulas in the standard way \cite{Dosen1986}. We only give the semantic clauses for the connectives $\rightarrow$, $\neg$ and ${\sim}$.
\begin{enumerate}[label={$\arabic*$.}, leftmargin=1 \parindent, noitemsep]
\item $\mathcal{M}, x \vDash  \alpha \rightarrow \beta \Leftrightarrow \mbox{ for all } y \in X, \mbox{ if } x \leq y \mbox{ and } \mathcal{M}, y \vDash \alpha  \mbox{ then } \mathcal{M}, y \vDash \beta$.
\item $\mathcal{M}, x \vDash  \neg \alpha \Leftrightarrow \mbox{ for all } y \in X (x R_{N_1} y \Rightarrow \mathcal{M}, y \nvDash \alpha).$
\item $\mathcal{M}, x \vDash  {\sim} \alpha \Leftrightarrow \mbox{ for all } y \in X (x R_{N_2} y \Rightarrow \mathcal{M}, y \nvDash \alpha).$
\end{enumerate}
\end{definition}

\noindent 
A formula $\alpha$ is {\it true in a model} $\mathcal{M}$ (notation: $\mathcal{M} \vDash \alpha$) if $\mathcal{M} ,x \vDash \alpha$ for all $x \in X$. A formula $\alpha \in F$ is {\it valid in the} $\hat{N}$-frame $\mathcal{F}$ (notation: $\mathcal{F} \vDash \alpha$) if $\mathcal{M} \vDash \alpha$ for every model $\mathcal{M}$ on the $\hat{N}$-frame $\mathcal{F}$.
A formula $\alpha \in F$ is {\it valid in a class} $\mathcal{C}$ of $\hat{N}$-frames  (notation: $\mathcal{C} \vDash \alpha$) if for every $\hat{N}$-frame $\mathcal{F} \in \mathcal{C}$, $\mathcal{F} \vDash \alpha$.
\vskip 3pt

\noindent The following can  easily be observed.
\begin{proposition} For any formula $\alpha \in F$ and $x \in X$, 
\begin{enumerate}[label={$\arabic*$.}, leftmargin=1 \parindent, noitemsep]
\item $\mathcal{M}, x \vDash  \neg \top  \Leftrightarrow \forall y \in X ~ (x \cancel{R}_{N_1} y)$,
\item $\mathcal{M}, x \vDash  {\sim} \top  \Leftrightarrow \forall y \in X ~ (x \cancel{R}_{N_2} y)$,
\item $\forall y \in X((\mathcal{M}, x \vDash \alpha ~\&~ x \leq y) \Rightarrow \mathcal{M}, y \vDash \alpha$),
\item $\mathcal{M}, x \vDash \neg \alpha \Leftrightarrow \forall y \in X(\exists z \in X(x \leq z ~\&~ y \leq z) \Rightarrow y \nvDash \alpha)$,
\item $\mathcal{M}, x \vDash {\sim} \alpha \Leftrightarrow \forall y \in X \bigl(x R_{N_1} y \Rightarrow (y \vDash \alpha \Rightarrow \exists z \in X(y R_{N_1} z ~\&~ \forall z' \in X ~ (z \cancel{R}_{N_2} z')))\bigr)$.
\end{enumerate}
\end{proposition}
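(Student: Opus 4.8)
The plan is to verify each of the five clauses directly from Definition \ref{ILM-truth} (the semantic clauses for $\rightarrow$, $\neg$, ${\sim}$), together with the frame conditions of Definition \ref{def-Nhatframes} and the basic observation that $v(p)$ is an upset. Items 1 and 2 are immediate: applying clause 2 (resp.\ clause 3) to $\alpha = \top$ and using $\mathcal{M},y\vDash\top$ for every $y$, the condition "$\mathcal{M},y\nvDash\top$" is never met, so $\mathcal{M},x\vDash\neg\top$ reduces to "$x\cancel{R}_{N_1}y$ for all $y$", and symmetrically for ${\sim}\top$.

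For item 3 (persistence/heredity along $\leq$), I would argue by induction on the structure of $\alpha$. The base case for propositional variables is exactly the hypothesis that $v(p)$ is an upset; the cases $\top,\bot,\wedge,\vee$ are routine; the case $\rightarrow$ follows from transitivity of $\leq$ in clause 1; and the cases $\neg,\sim$ use frame condition (2) of a strictly condensed frame, namely $({\leq}\,R_N\,{\leq^{-1}})\subseteq R_N$ — if $x\leq y$ and $x\vDash\neg\alpha$, then for any $z$ with $y R_{N_1} z$ we get $x R_{N_1} z$ (since $x\leq y$, $y R_{N_1} z$, $z\leq z$), hence $z\nvDash\alpha$, so $y\vDash\neg\alpha$; likewise for ${\sim}$ with $R_{N_2}$.

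Item 4 rewrites clause 2 for $\neg$ using the fact that $R_{N_1}$, being reflexive and symmetric with $({\leq}\,R_{N_1}\,{\leq^{-1}})\subseteq R_{N_1}$, satisfies $x R_{N_1} y \Leftrightarrow \exists z\,(x\leq z\ \&\ y\leq z)$: the forward direction takes $z=y$ (since $x R_{N_1} y$, symmetry and reflexivity give, via condition (2), that $x\leq z,y\leq z$ works with $z$ a common upper bound — more simply, use condition (4) of the strictly condensed $H$-frame to produce a common upper $z$ with $x R_{N_1} z$, though for item 4 we only need the direction $x R_{N_1} y\Rightarrow$ common upper bound, which is condition (4)); the backward direction, given a common upper bound $z$ of $x,y$, uses reflexivity $z R_{N_1} z$ and then condition (2) with $x\leq z$, $z R_{N_1} z$, $z\geq y$ to conclude $x R_{N_1} y$. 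Substituting this equivalence into clause 2 gives item 4.

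Item 5 is the main obstacle and requires the most care. Starting from clause 3, $\mathcal{M},x\vDash{\sim}\alpha \Leftrightarrow \forall y\,(x R_{N_2} y\Rightarrow y\nvDash\alpha)$, I want to re-express this using $R_{N_1}$ and the "dead-end" condition $\forall z'(z\cancel{R}_{N_2}z')$. The key is frame condition (3) of Definition \ref{def-Nhatframes}, which states precisely that if every $z$ reachable from $y$ via $R_{N_1}$ fails to be $R_{N_2}$-dead-end-producing in the stated sense, then $x$ is not $R_{N_2}$-dead, i.e.\ $\exists z''(x R_{N_2} z'')$. Contrapositively and combined with item 2's reading of ${\sim}\top$ (namely $w\vDash{\sim}\top\Leftrightarrow\forall z'(w\cancel{R}_{N_2}z')$), condition (3) lets one trade the quantifier over $R_{N_2}$-successors of $x$ for a quantifier over $R_{N_1}$-successors of $x$; then one checks that "$y\nvDash\alpha$ whenever $y$ is an $R_{N_2}$-successor" translates, under the correspondence, into "$y\vDash\alpha\Rightarrow \exists z(y R_{N_1} z\ \&\ z\vDash{\sim}\top)$", i.e.\ the bracketed condition in item 5. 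I expect the delicate point to be getting the directions of the implications and the interplay between $\cancel{R}_{N_2}$ and ${\sim}\top$-truth exactly right; it may be cleanest to first prove the auxiliary equivalence $x\vDash{\sim}\alpha \Leftrightarrow \forall y\,(x R_{N_1} y\ \&\ y\vDash\alpha \Rightarrow y\vDash{\sim}{\sim}\top)$ using condition (3) and heredity, and then unfold ${\sim}{\sim}\top$ one more level to reach the displayed form. All other steps are routine unwinding of definitions.
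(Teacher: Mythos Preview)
Your treatment of items 1--4 is correct and matches what the paper leaves implicit (the paper offers no proof, stating the proposition ``can easily be observed''). In particular, the characterisation $x R_{N_1} y \Leftrightarrow \exists z\,(x\leq z\ \&\ y\leq z)$ via reflexivity and condition~(2) is exactly the right tool for item~4.

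For item~5, however, your proposed auxiliary equivalence
\[
x\vDash{\sim}\alpha \ \Leftrightarrow\ \forall y\,(x R_{N_1} y\ \&\ y\vDash\alpha \ \Rightarrow\ y\vDash{\sim}{\sim}\top)
\]
is a genuine misstep. The formula ${\sim}{\sim}\top$ is \emph{valid at every world}: if $y R_{N_2} z$ then by symmetry $z R_{N_2} y$, so $z$ is not $R_{N_2}$-dead, hence $z\nvDash{\sim}\top$; thus $y\vDash{\sim}{\sim}\top$ always. Your auxiliary therefore collapses to a tautology on the right, and ``unfolding ${\sim}{\sim}\top$ one more level'' yields $\forall z\,(y R_{N_2} z\Rightarrow \neg D(z))$, not the required $\exists z\,(y R_{N_1} z\ \&\ D(z))$ --- wrong quantifier, wrong relation.

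The clean route is to observe that the right-hand side of item~5 is literally $x\vDash \neg(\alpha\wedge\neg{\sim}\top)$: the inner condition $\exists z\,(y R_{N_1} z\ \&\ \forall z'\,z\cancel{R}_{N_2}z')$ is exactly $y\nvDash\neg{\sim}\top$, so the whole thing reads $\forall y\,(x R_{N_1} y\Rightarrow y\nvDash\alpha\wedge\neg{\sim}\top)$. One then shows ${\sim}\alpha$ and $\neg(\alpha\wedge\neg{\sim}\top)$ agree at every world. The forward direction needs only the $J$-frame conditions on $R_{N_2}$ (via the characterisation $u R_{N_2} v \Leftrightarrow \exists w\,(u\leq w\ \&\ v\leq w\ \&\ \neg D(w))$, which you can prove from symmetry and conditions (2),(4) for $R_{N_2}$); the reverse direction is where frame condition~(3) enters, precisely because it is the semantic content of $\neg\neg{\sim}\top\rightarrow{\sim}\top$. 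Your instinct that condition~(3) is the hinge is right --- it is only the bookkeeping that went astray.
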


Let us return to Condition (3) in Definition \ref{def-Nhatframes}. Recall the ${\rm ILM}_1$-axiom $\neg \neg {\sim} \top \leftrightarrow {\sim} \top$. For any $\hat{N}$-model $\mathcal{M}$, expansion of $\mathcal{M} \vDash \neg \neg {\sim} \top \leftrightarrow {\sim} \top$  using Definition \ref{ILM-truth} gives
\begin{align*}
\forall x \in X \bigl(\forall y \in X \bigl(x R_{N_1} y \Rightarrow \exists z\in X (y R_{N_1} z ~\&~ \forall z'\in X(z \cancel{R}_{N_2} z'))\bigr) \Leftrightarrow  \forall z''\in X(x \cancel{R}_{N_2} z'')\bigr)
\end{align*}
The reverse implication in the above is always true. Arguing for its contrapose, let $y \in X$ such that $x R_{N_1}y$ and $\forall z \in X (yR_{N_1}z \Rightarrow \exists z' \in X~ z R_{N_2}z')$. $xR_{N_1}y$ implies $y R_{N_1}x$. Therefore, there exists $z' \in X$ such that $x R_{N_2}z'$. Take $z'' = z'$.\\
Thus, one direction of the above bi-implication is always true. The other direction is exactly Condition (3) in Definition \ref{def-Nhatframes}. 
Therefore this condition expresses the required property as mentioned in Observation \ref{obs-Nhatframes}(3).

A logic is said to be {\it determined} by a class of frames if it is complete with respect to the class of frames. Our aim now is to show that the logic ${\rm ILM}$ (${\rm ILM}$-${\vee}$) is determined by the class $\mathfrak{F}_1$ ($\mathfrak{F}_1'$) of $\hat{N}$-frames ($\hat{N}'$-frames).

\subsection{Characterization results for \textrm{ILM} and \textrm{ILM}-\texorpdfstring{${\vee}$}~}
\label{subsec-char}
Soundness, i.e. for any formula $\alpha \in F$, $\vdash_{\rm ILM} \alpha \Rightarrow \mathfrak{F}_1 \vDash \alpha$ and $\vdash_{\rm ILM-{\vee}} \alpha \Rightarrow \mathfrak{F}_1' \vDash \alpha$,
can be obtained in the standard manner, using induction on the number of connectives of $\alpha$.
Let us sketch the proof of completeness for ILM and {\rm ILM}-${\vee}$. The structure of the proof is similar to the cases of {\rm ML} and {\rm IL} given in \cite{Dosen1986}. Therefore, we shall only show the steps where there is a change or an extension to the proofs in \cite{Dosen1986}.
We first require the concept of a theory \cite{Segerberg1968,Dosen1986}, that we extend to the context of {\rm ILM}. 

\begin{definition}[Theory]
\label{theory}\index{theory}
A {\it theory} $T \subseteq F$ with respect to an extension ${\rm S}$ of {\rm ILM}, is a non-empty set of formulas in $\mathcal{L}$ such that, for formulas $\alpha,\beta \in F$,
\begin{enumerate}[label={$\arabic*$.}, leftmargin=1 \parindent, noitemsep, topsep=0pt]
\item if $\alpha \in T$ and $\alpha \rightarrow \beta \in T$, then $\beta \in T$ (closed under deduction),
\item $\alpha \in T$, where $\vdash_{\rm S} \alpha$, and
\item if $\alpha, \beta \in T$ then $\alpha \wedge \beta \in T$ (closed under $\wedge$).
\end{enumerate}
\index{theory!consistent}A theory is {\it consistent} if $\bot \notin T$, otherwise {\it inconsistent}. A {\it prime} theory \index{theory!prime} is a consistent theory such that for any two formulas $\alpha,\beta \in F$, if $\alpha \vee \beta \in T$ then either $\alpha \in T$ or $\beta \in T$.
\end{definition}

\noindent Using Axiom (A8) namely $ \bot \rightarrow \alpha$, 
a theory $T$ is consistent if and only if there exists a formula $\alpha$ such that $\alpha \notin T$. Now, for an arbitrary set $\Delta$ of formulas, the intersection of all theories containing $\Delta$ is also a theory; it is called the {\it theory generated by $\Delta$} and denoted by $Th(\Delta)$.
%
%
A set $F' \subseteq F$  is {\it closed under} $\vee$, if for any $\alpha,\beta \in F'$, $\alpha \vee \beta \in F'$; $F'$ is then called {\it disjunctive closed} (or $\vee${\it -closed}). In fact, any arbitrary $\Delta \subseteq F$ can be extended to a $\vee$-closed set, called {\it disjunctive closure} of $\Delta$ (denoted ${\rm dc}(\Delta)$), as follows.
\[ {\rm dc}(\Delta):= \bigcap\{ \Delta' \subseteq F ~|~ \Delta \subseteq \Delta' \mbox { and } \forall \alpha,\beta\in F (\alpha,\beta \in \Delta' \Rightarrow \alpha \vee \beta \in \Delta') \}. \]
${\rm dc}(\Delta)$ is $\vee$-closed. Moreover, for $\alpha \in F$, if $\beta \in {\rm dc}(\{\alpha\})$ then $\vdash_{\rm S} \alpha \leftrightarrow \beta$.

\begin{lemma}[Extension lemma]
\label{lem-pel}
Let $\Delta$ be a consistent theory and $\Gamma \subseteq F$ be a $\vee$-closed set. If $\Delta \cap \Gamma = \emptyset$ then there exists a prime theory $P$ such that $\Delta \subseteq P$ and $P \cap \Gamma = \emptyset$.
\end{lemma}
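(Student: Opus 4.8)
The plan is to run the standard Lindenbaum-type construction adapted to the $\vee$-closed ``avoidance set'' $\Gamma$, building $P$ as a maximal theory extending $\Delta$ while staying disjoint from $\Gamma$. First I would fix an enumeration $\alpha_0,\alpha_1,\dots$ of all formulas of $\mathcal{L}$ (the language is countable) and define an increasing chain of theories $\Delta = P_0 \subseteq P_1 \subseteq \cdots$ by setting, at stage $n+1$, $P_{n+1} := Th(P_n \cup \{\alpha_n\})$ if this theory is still disjoint from $\Gamma$, and $P_{n+1} := P_n$ otherwise. Then I would put $P := \bigcup_n P_n$. Each $P_n$ is a consistent theory disjoint from $\Gamma$ by construction (consistency follows since, were $\bot \in P_n$, then by (A8) every formula, in particular every element of $\Gamma$, would lie in $P_n$, contradicting $P_n \cap \Gamma = \emptyset$ as long as $\Gamma \neq \emptyset$; and one should note $\Gamma \neq \emptyset$ can be assumed, since if $\Gamma = \emptyset$ the statement is the ordinary Lindenbaum lemma). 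One checks directly that an increasing union of theories is a theory: closure under MP, under $\wedge$, and containment of all $\mathrm{S}$-theorems each hold at some finite stage. Hence $P$ is a consistent theory, $\Delta \subseteq P$, and $P \cap \Gamma = \emptyset$.

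It remains to show $P$ is prime. Suppose $\alpha \vee \beta \in P$ but $\alpha \notin P$ and $\beta \notin P$. Then $\alpha$ and $\beta$ were each rejected at their respective stages, so $Th(P \cup \{\alpha\})$ meets $\Gamma$ and $Th(P \cup \{\beta\})$ meets $\Gamma$: there are $\gamma_1, \gamma_2 \in \Gamma$ with $P \cup \{\alpha\} \vdash_{\mathrm{S}} \gamma_1$ and $P \cup \{\beta\} \vdash_{\mathrm{S}} \gamma_2$. Using DT (available in $\mathrm{S}$, an extension of ILM) and the deductive closure of $P$, this gives $\alpha \rightarrow \gamma_1 \in P$ and $\beta \rightarrow \gamma_2 \in P$. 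Since $\Gamma$ is $\vee$-closed, $\gamma_1 \vee \gamma_2 \in \Gamma$. From $\alpha \rightarrow \gamma_1$ and $\beta \rightarrow \gamma_2$ one derives $\alpha \rightarrow (\gamma_1 \vee \gamma_2)$ and $\beta \rightarrow (\gamma_1 \vee \gamma_2)$ (axioms (A3)(i),(ii) and transitivity of $\rightarrow$), and then by (A4) and MP, $(\alpha \vee \beta) \rightarrow (\gamma_1 \vee \gamma_2) \in P$. Since $\alpha \vee \beta \in P$ and $P$ is closed under deduction, $\gamma_1 \vee \gamma_2 \in P$, contradicting $P \cap \Gamma = \emptyset$. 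Hence $P$ is prime.

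The only genuinely delicate point — and the one I would flag as the main obstacle — is the verification that $P \cap \Gamma = \emptyset$ is preserved along the construction and passed to the union correctly, together with the interplay with $\vee$-closedness of $\Gamma$ in the primeness argument: one must be careful that rejecting $\alpha_n$ at stage $n$ because $Th(P_n \cup \{\alpha_n\})$ meets $\Gamma$ does not accidentally break disjointness of $P_{n+1} = P_n$ itself, which is immediate, but that the \emph{later} derivation of $\gamma_1 \vee \gamma_2$ uses the $\vee$-closure of $\Gamma$ in an essential way — this is exactly why $\Gamma$ is required to be $\vee$-closed rather than an arbitrary set. Everything else is a routine adaptation of the Lindenbaum/prime-extension argument as in \cite{Dosen1986,Segerberg1968}, the only ILM-specific ingredients being (A8) for consistency and DT plus the disjunction axioms (A3),(A4) for the primeness step.
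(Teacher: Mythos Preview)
Your proposal is correct and follows the standard Lindenbaum--style prime extension argument (enumeration, greedy extension, maximality implies primeness via the $\vee$-closure of $\Gamma$ and axioms (A3), (A4), DT). The paper does not give its own proof of this lemma; it is stated as a standard result imported from \cite{Dosen1986,Segerberg1968}, and your argument is exactly the kind of proof found in those references. One small remark: your handling of the edge case $\Gamma = \emptyset$ by appeal to ``the ordinary Lindenbaum lemma'' is slightly glib, since in non-classical settings one must still check that a maximal consistent theory is prime --- but the same (A4)/DT argument you give for the general case does this, so there is no real gap.
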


\noindent An immediate corollary is obtained when  $\Gamma := {\rm dc}\{\alpha\}$ for any $\alpha \in F$.

\begin{corollary}
\label{cor-pel}
Let $\Delta$ be a consistent theory and $\alpha \in F$ be such that $\alpha \notin \Delta$. Then there is a prime theory $P$ such that $\Delta \subseteq P$ and $\alpha \notin P$.
\end{corollary}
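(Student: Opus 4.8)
The statement to prove is Corollary~\ref{cor-pel}: given a consistent theory $\Delta$ and $\alpha \notin \Delta$, there is a prime theory $P$ with $\Delta \subseteq P$ and $\alpha \notin P$.

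\medskip

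The plan is to obtain this as an immediate consequence of the Extension Lemma (Lemma~\ref{lem-pel}) by taking $\Gamma := {\rm dc}(\{\alpha\})$, the disjunctive closure of the singleton $\{\alpha\}$. First I would note that $\Gamma$ is $\vee$-closed by construction, so the hypothesis of Lemma~\ref{lem-pel} on $\Gamma$ is satisfied. The only real content is to verify that $\Delta \cap \Gamma = \emptyset$; once that is in hand, Lemma~\ref{lem-pel} yields a prime theory $P$ with $\Delta \subseteq P$ and $P \cap \Gamma = \emptyset$, and since $\alpha \in {\rm dc}(\{\alpha\}) = \Gamma$, this forces $\alpha \notin P$, which is exactly what is wanted.

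\medskip

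To see that $\Delta \cap \Gamma = \emptyset$, suppose toward a contradiction that some $\beta \in \Delta \cap {\rm dc}(\{\alpha\})$. By the property of disjunctive closure recorded just before the statement (for $\alpha \in F$, if $\beta \in {\rm dc}(\{\alpha\})$ then $\vdash_{\rm S} \alpha \leftrightarrow \beta$), we get $\vdash_{\rm S} \alpha \leftrightarrow \beta$, hence $\vdash_{\rm S} \beta \rightarrow \alpha$. Since $\Delta$ is a theory, it contains every ${\rm S}$-theorem (clause~2 of Definition~\ref{theory}), so $\beta \rightarrow \alpha \in \Delta$; and $\beta \in \Delta$ by assumption. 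Closure under deduction (clause~1) then gives $\alpha \in \Delta$, contradicting the hypothesis $\alpha \notin \Delta$. Hence $\Delta \cap \Gamma = \emptyset$, completing the argument.

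\medskip

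There is essentially no obstacle here beyond the bookkeeping about disjunctive closure; the corollary is purely a specialization of Lemma~\ref{lem-pel}, and the one nontrivial-looking step (disjointness of $\Delta$ and $\Gamma$) reduces immediately to the fact that every element of ${\rm dc}(\{\alpha\})$ is provably equivalent to $\alpha$ together with the defining closure properties of a theory.
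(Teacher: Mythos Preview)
Your proof is correct and follows exactly the approach indicated in the paper, namely specializing Lemma~\ref{lem-pel} with $\Gamma := {\rm dc}(\{\alpha\})$. You have simply filled in the routine verification of $\Delta \cap \Gamma = \emptyset$ that the paper leaves implicit.
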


Let us now define the `canonical' frame $\mathcal{F}^c := (X^c, \subseteq, R_{N_1}^c, R_{N_2}^c)$ in the standard way. 

\begin{definition}[Canonical frame]
\label{can-frame}
The {\rm canonical frame} for any extension ${\rm S}$ of ${\rm ILM}$ is the quadruple $\mathcal{F}^c := (X^c, \subseteq, R_{N_1}^c, R_{N_2}^c)$, where\\
$X^c := \{P \subseteq F ~|~ P \mbox{ is a prime theory}\}$,\\
$P R_{N_1}^c Q$ if and only if (for all $\alpha \in F$, $\neg \alpha \in P \Rightarrow \alpha \notin Q$), and\\
$P R_{N_2}^c Q$ if and only if (for all $\alpha \in F$, ${\sim} \alpha \in P \Rightarrow \alpha \notin Q$).
\end{definition}

\noindent We shall now show that $\mathcal{F}^c$ is indeed an $\hat{N}$-frame. For that, we shall require the following standard result.

\begin{lemma}
\label{lem-can1}
For any $\alpha \in F$ and any $P \in X^c$,
\begin{enumerate}[label={$\arabic*$.}, leftmargin=1 \parindent, noitemsep, topsep=0pt]
\item ${\neg} \alpha \in P$ if and only if for all $Q \in X^c$, $P R_{N_1}^c Q \Rightarrow \alpha \notin Q$, and
\item ${\sim} \alpha \in P$ if and only if for all $Q \in X^c$, $P R_{N_2}^c Q \Rightarrow \alpha \notin Q$.
\end{enumerate}
\end{lemma}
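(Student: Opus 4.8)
The plan is to prove each of the two biconditionals by establishing the two implications separately, following the standard canonical-model argument for Do\v{s}en-style frames.

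\textbf{Forward direction.} For part 1, suppose $\neg\alpha \in P$ and let $Q \in X^c$ with $P R_{N_1}^c Q$. By the very definition of $R_{N_1}^c$ in Definition \ref{can-frame}, from $\neg\alpha \in P$ we immediately get $\alpha \notin Q$. Part 2 is identical, using the clause for $R_{N_2}^c$. So the forward direction is essentially just unwinding the definition of the canonical accessibility relations.

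\textbf{Converse direction (the substantive part).} For part 1, I would argue contrapositively: assume $\neg\alpha \notin P$ and produce a prime theory $Q$ with $P R_{N_1}^c Q$ and $\alpha \in Q$. The natural candidate is $Q_0 := Th(\{\alpha\} \cup \{\beta : \neg\beta \notin P\}\text{-type set})$; more precisely, I would take the theory generated by $\alpha$ together with a suitable set guaranteeing that $\neg$-formulas not in $P$ can be witnessed. The key claim is that the set $\Gamma := \{\beta \in F : \neg\beta \in P\}$ is disjoint from a $\vee$-closed set containing the theory $Th(\{\alpha\})$, so that Lemma \ref{lem-pel} (the Extension lemma) yields a prime theory $Q \supseteq Th(\{\alpha\})$ with $Q \cap \Gamma = \emptyset$. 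Disjointness is where axioms (A9) and (A10) enter: if some $\beta \in Th(\{\alpha\})$ had $\neg\beta \in P$, then using (A9)/(A10) together with $\vdash \alpha \to \beta$ one derives $\neg\alpha \in P$, contradicting the hypothesis --- here one needs that $P$ is closed under deduction and contains all theorems, and that $\beta \in Th(\{\alpha\})$ means $\vdash \alpha \to \beta$ (from the remark that ${\rm dc}(\{\alpha\})$-membership forces $\vdash \alpha \leftrightarrow \beta$, and the analogous fact for $Th$). Once $Q$ is obtained, $\alpha \in Q$ and, for every $\gamma$ with $\neg\gamma \in P$ we have $\gamma \notin Q$, i.e. $P R_{N_1}^c Q$, completing the contrapositive.

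\textbf{The $\sim$ case and the main obstacle.} Part 2 proceeds the same way with $\neg$ replaced by $\sim$, but here one cannot invoke (A9), (A10) directly since $\sim$ is only a \emph{minimal} negation. Instead I would use the $\sim$-reductio axiom (A13), equivalently Proposition \ref{prop-ilm}(c) $\vdash_{\rm ILM} {\sim}\alpha \leftrightarrow (\alpha \to {\sim}\top)$: if $\beta \in Th(\{\alpha\})$ with $\sim\beta \in P$, then $\vdash \alpha \to \beta$ gives, via (A13) applied with the roles of the formulas suitably arranged, $\sim\alpha \in P$ --- contradicting $\sim\alpha \notin P$. The main delicacy, and what I expect to be the real obstacle, is getting the witnessing set for the converse right: one must ensure the generated theory $Th(\{\alpha\})$ is itself consistent (it is, since $\alpha \notin$ any inconsistent theory would be needed, and $\bot \notin Th(\{\alpha\})$ follows because otherwise $\vdash \alpha \to \bot$, i.e. $\neg\alpha$ a theorem, forcing $\neg\alpha \in P$) and that the $\vee$-closed set being avoided is exactly ${\rm dc}(\Gamma)$ or ${\rm dc}(\{\alpha'\})$ as appropriate, so that Lemma \ref{lem-pel}/Corollary \ref{cor-pel} applies cleanly. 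Handling the interaction between "closed under deduction" for $P$ and the derivability facts about $Th$ carefully is the crux; the rest is routine bookkeeping.
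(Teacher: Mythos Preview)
Your approach is correct and is exactly the standard canonical-model argument from Do\v{s}en \cite{Dosen1986} that the paper cites; indeed, the paper does not spell out a proof of this lemma at all, labelling it a ``standard result'' and deferring to \cite{Dosen1986}.

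Two small points of clarification. First, your phrasing ``$\Gamma := \{\beta : \neg\beta \in P\}$ is disjoint from a $\vee$-closed set containing $Th(\{\alpha\})$'' has the roles reversed: in Lemma~\ref{lem-pel} the \emph{theory} is $Th(\{\alpha\})$ and the $\vee$-closed set to avoid is $\Gamma$ itself (your later sentence gets this right). Second, you needn't pass to ${\rm dc}(\Gamma)$: $\Gamma$ is already $\vee$-closed, since $\neg\beta_1,\neg\beta_2 \in P$ gives $\neg\beta_1 \wedge \neg\beta_2 \in P$ and hence $\neg(\beta_1 \vee \beta_2) \in P$ via (A4); the analogous fact for ${\sim}$ follows from Proposition~\ref{prop-ilm}(c). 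With these in place, the Extension Lemma applies directly and the rest of your outline goes through.
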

\noindent Recall Definition \ref{def-Nhatframes} of an $\hat{N}$-frame, more specifically its distinctive feature 
Condition (3). We shall only show the following: for all $P \in X^c$,\\
$\bigl( \forall Q \in X^c  \bigl(P R_{N_1}^c Q \Rightarrow \exists R \in X^c  (Q R_{N_1}^c R ~\&~ \forall R'\in X^c ~ (R \cancel{R}_{N_2}^c R'))\bigr)$\\
\hspace*{\fill} $\Rightarrow \forall R'' \in X^c ~ (P \cancel{R}_{N_2}^c R'')\bigr)$.\\
Suppose $\forall Q\in X^c\bigl(P R_{N_1}^c Q \Rightarrow \exists R \in X^c  (Q R_{N_1}^c R ~\&~ \forall R'\in X^c ~ (R \cancel{R}_{N_2}^c R'))\bigr)$. Using Lemma \ref{lem-can1}, $\neg \neg {\sim} \top \in P$. Since  $\vdash_{\rm ILM} \neg \neg {\sim} \top \leftrightarrow {\sim} \top$, we have ${\sim} \top \in P$. Finally, Lemma \ref{lem-can1} implies $\forall R'' \in X^c ~ (P \cancel{R}_{N_2}^c R'')$. Thus we have

\begin{proposition}
$\mathcal{F}^c := (X^c, \subseteq, R_{N_1}^c, R_{N_2}^c)$ is an $\hat{N}$-frame.
\end{proposition}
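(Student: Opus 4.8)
The goal is to verify that the canonical frame $\mathcal{F}^c := (X^c, \subseteq, R_{N_1}^c, R_{N_2}^c)$ meets all three defining conditions of an $\hat{N}$-frame (Definition \ref{def-Nhatframes}). The plan is to dispose of Conditions (1) and (2) first, and then to treat Condition (3) in detail, since the excerpt has already sketched that part. Condition (1) asks that $(X^c, \subseteq, R_{N_1}^c)$ be a strictly condensed $H$-frame, and Condition (2) asks that $(X^c, \subseteq, R_{N_2}^c)$ be a strictly condensed $J$-frame. For both, the underlying poset requirement is immediate since $\subseteq$ is a partial order on the set $X^c$ of prime theories. The remaining clauses — $(\subseteq R_{N_i}^c \supseteq) \subseteq R_{N_i}^c$, symmetry of $R_{N_i}^c$, the ``strict condensation'' clause $\forall P,Q(P R_{N_i}^c Q \Rightarrow \exists R(P \subseteq R ~\&~ Q \subseteq R ~\&~ P R_{N_i}^c R))$, and additionally reflexivity of $R_{N_1}^c$ — are exactly the canonical-frame verifications carried out by Do\v{s}en for {\rm IL} (for $i=1$, using the intuitionistic axioms (A9), (A10) governing $\neg$) and for {\rm ML} (for $i=2$, using ($\sim$ reductio, i.e. the {\rm ML}-form of the $\sim$-rules derivable in {\rm ILM} via Proposition \ref{prop-ilm})) in \cite{Dosen1986}. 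Since {\rm ILM} contains {\rm IL} with respect to $\neg$ and, via Proposition \ref{prop-ilm}(c), treats $\sim$ as a minimal negation, those arguments transfer verbatim with the sole use of Corollary \ref{cor-pel} (the prime extension lemma) to produce the witnessing prime theory $R$ at each step; I would simply cite \cite{Dosen1986} for these and not reproduce them.

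The substantive part is Condition (3), and here the excerpt already gives the core of the argument: assuming, for a fixed $P \in X^c$, that $\forall Q \in X^c(P R_{N_1}^c Q \Rightarrow \exists R \in X^c(Q R_{N_1}^c R ~\&~ \forall R' \in X^c(R \cancel{R}_{N_2}^c R')))$, one applies Lemma \ref{lem-can1}(2) ``backwards'' at the innermost level to convert $\forall R'(R \cancel{R}_{N_2}^c R')$ into $\sim\top \in R$; then Lemma \ref{lem-can1}(1) at the next level turns $\forall Q(P R_{N_1}^c Q \Rightarrow \exists R(Q R_{N_1}^c R ~\&~ \sim\top \in R))$ into $\neg\sim\top \notin Q$ for each $R_{N_1}^c$-successor $Q$ of $P$, which is $\neg\neg\sim\top \in P$ by Lemma \ref{lem-can1}(1) once more; finally the {\rm ILM}-theorem $\vdash_{\rm ILM} \neg\neg\sim\top \leftrightarrow \sim\top$ (Proposition \ref{prop-ilm}(b)) together with closure of the theory $P$ under deduction gives $\sim\top \in P$, and Lemma \ref{lem-can1}(2) converts this to the conclusion $\forall R'' \in X^c(P \cancel{R}_{N_2}^c R'')$. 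The one place that deserves care is the direction of each application of Lemma \ref{lem-can1}: the ``only if'' direction is the straightforward one, while the ``if'' direction (from ``all $R_{N_i}^c$-successors avoid $\alpha$'' to ``$\neg\alpha$ (resp. $\sim\alpha$) belongs to the theory'') is where the extension lemma is secretly invoked inside Lemma \ref{lem-can1}, so I would make explicit which direction is being used at each node of the nested quantifier structure and confirm that it is the already-established Lemma \ref{lem-can1} rather than something stronger.

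\textbf{Main obstacle.} I expect no genuine difficulty, only bookkeeping: the nested alternating quantifiers $\forall Q(\ldots \Rightarrow \exists R(\ldots \forall R'(\ldots)))$ in Condition (3) must be peeled off in the correct order, and one must be scrupulous about which implication of Lemma \ref{lem-can1} is applied at each stage (a slip here would either use an unproven converse or lose the witness). The only conceptual ingredient beyond the generic Do\v{s}en machinery is the {\rm ILM}$_1$-axiom / derived theorem $\neg\neg\sim\top \leftrightarrow \sim\top$, which is precisely the link between $R_{N_1}^c$ and $R_{N_2}^c$ that Condition (3) encodes, and its role is already pinned down in the surrounding text. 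Thus the proof is: (i) cite \cite{Dosen1986} for Conditions (1) and (2) via the embeddings of {\rm IL} and {\rm ML} in {\rm ILM}; (ii) give the quantifier-unwinding argument above for Condition (3), invoking Lemma \ref{lem-can1} (both directions), Proposition \ref{prop-ilm}(b), and deductive closure of prime theories; (iii) conclude that $\mathcal{F}^c \in \mathfrak{F}_1$.
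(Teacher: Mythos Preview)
Your proposal is correct and follows essentially the same approach as the paper: Conditions (1) and (2) are delegated to Do\v{s}en's arguments for {\rm IL} and {\rm ML}, while Condition (3) is established by unwinding the nested quantifiers via Lemma~\ref{lem-can1} to obtain $\neg\neg{\sim}\top \in P$, then invoking $\vdash_{\rm ILM} \neg\neg{\sim}\top \leftrightarrow {\sim}\top$ and deductive closure to get ${\sim}\top \in P$, and finally applying Lemma~\ref{lem-can1} again. The paper's proof is terser but structurally identical.
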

\noindent The canonical valuation $v^c$ on the canonical frame $\mathcal{F}^c := (X^c, \subseteq, R_{N_1}^c, R_{N_2}^c)$, defined as $v^c(p):= \{P \in X^c~|~ p \in P\}$ for all $p \in PV$, gives  the  canonical $\hat{N}$-model  $\mathcal{M}^c := (\mathcal{F}^c,v^c)$. The truth lemma follows.

\begin{lemma}[Truth Lemma]
\label{lem-truth}\index{truth lemma}
For any $\alpha \in F$ and $P \in X^c$, $\mathcal{M}^c, P \vDash \alpha$ if and only if $\alpha \in P$.
\end{lemma}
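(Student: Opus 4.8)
The plan is to prove the Truth Lemma by induction on the structure of the formula $\alpha$, following the standard Henkin-style argument as carried out for ML and IL in \cite{Dosen1986}, and pointing out only the clauses that need new work for ILM. First I would dispose of the base cases: for $\alpha = p \in PV$, the claim $\mathcal{M}^c, P \vDash p \Leftrightarrow p \in P$ is exactly the definition of the canonical valuation $v^c$; for $\alpha = \top$, both sides hold trivially (every prime theory contains $\top$ since $\vdash_{\rm ILM}\top$); for $\alpha = \bot$, the right side fails because prime theories are consistent, and the left side fails by the semantic clause for $\bot$, so the equivalence holds.

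Next come the inductive steps for the binary connectives $\wedge, \vee, \rightarrow$. These are handled precisely as in \cite{Dosen1986}: the $\wedge$-case uses closure of theories under $\wedge$ and axioms (A5); the $\vee$-case uses the \emph{primeness} of $P$ together with (A3); and the $\rightarrow$-case is the one genuine piece of work among these --- for the nontrivial direction one assumes $\alpha \rightarrow \beta \notin P$, considers the theory $Th(P \cup \{\alpha\})$, shows $\beta \notin Th(P\cup\{\alpha\})$ using DT, applies Corollary \ref{cor-pel} to extend to a prime theory $Q \supseteq P$ with $\alpha \in Q$, $\beta \notin Q$, and invokes the induction hypothesis at $Q$ to contradict $\mathcal{M}^c, P \vDash \alpha \rightarrow \beta$. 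None of this differs from the IL case.

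The clauses that are specific to ILM are $\neg\alpha$ and ${\sim}\alpha$. But these are exactly what Lemma \ref{lem-can1} was set up to deliver: by definition $\mathcal{M}^c, P \vDash \neg\alpha$ iff for all $Q \in X^c$, $P R_{N_1}^c Q \Rightarrow \mathcal{M}^c, Q \nvDash \alpha$, which by the induction hypothesis is equivalent to ``for all $Q$, $P R_{N_1}^c Q \Rightarrow \alpha \notin Q$'', and Lemma \ref{lem-can1}(1) says this holds iff $\neg\alpha \in P$. The ${\sim}\alpha$-case is identical using $R_{N_2}^c$ and Lemma \ref{lem-can1}(2). The same argument covers the extension ${\rm ILM}$-${\vee}$ verbatim, since the only new ingredient there is axiom (A12), which affects soundness over $\mathfrak{F}_1'$ rather than the canonical model construction, and $\mathcal{F}^c$ for ${\rm ILM}$-${\vee}$ automatically satisfies $R_{N_2}^c \subseteq (\subseteq^{-1})$.

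I expect the main obstacle to be the $\rightarrow$-clause, as is typical: establishing that $Th(P\cup\{\alpha\})$ is a theory not containing $\beta$ requires the deduction theorem for ILM and care that the generated theory really is closed under deduction and $\wedge$; this is where one must be slightly attentive, whereas the $\neg$ and ${\sim}$ clauses are essentially immediate once Lemma \ref{lem-can1} is in hand. A secondary point worth checking is that the induction hypothesis is applied only to proper subformulas in each clause, which it is, so the induction is well-founded.
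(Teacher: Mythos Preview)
Your proposal is correct and matches the paper's approach: the paper explicitly states that the completeness proof follows \cite{Dosen1986} and only details the steps that differ, so the Truth Lemma is left unproved there, with Lemma~\ref{lem-can1} set up precisely to handle the $\neg$ and ${\sim}$ clauses in the induction --- exactly as you outline. Your treatment of the base cases, the connectives $\wedge,\vee,\rightarrow$, and the two negations via Lemma~\ref{lem-can1} and the induction hypothesis is the intended argument.
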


Finally, using the truth lemma, the completeness result for the logics ${\rm ILM}$ and ${\rm ILM}$-$\vee$ is obtained.
\begin{theorem}[Completeness]
\label{comp-ILM-dosen}
For any formula $\alpha \in F$,\\
{\rm (i)} $\mathfrak{F}_1 \vDash \alpha \Rightarrow ~\vdash_{\rm ILM} \alpha$.\\
{\rm (ii)} $\mathfrak{F}_1' \vDash \alpha \Rightarrow ~\vdash_{\rm ILM-{\vee}} \alpha$.
\end{theorem}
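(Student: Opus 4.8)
The plan is to establish completeness by the usual Henkin-style contraposition argument, leveraging all the canonical-frame machinery built up in the preceding lemmas. First I would argue the contrapositive: assume $\nvdash_{\rm ILM} \alpha$, and aim to produce an $\hat{N}$-model in which $\alpha$ fails at some world. Since $\nvdash_{\rm ILM}\alpha$, the formula $\alpha$ is not in the smallest theory $Th(\emptyset)$ (the set of ILM-theorems), which is a consistent theory. By Corollary \ref{cor-pel}, there is a prime theory $P$ with $Th(\emptyset)\subseteq P$ and $\alpha\notin P$. Hence $P\in X^c$. The preceding Proposition tells us $\mathcal{F}^c=(X^c,\subseteq,R_{N_1}^c,R_{N_2}^c)$ is an $\hat{N}$-frame, i.e. $\mathcal{F}^c\in\mathfrak{F}_1$, and the canonical valuation $v^c$ gives a model $\mathcal{M}^c$ on it. By the Truth Lemma \ref{lem-truth}, $\mathcal{M}^c,P\vDash\alpha$ iff $\alpha\in P$; since $\alpha\notin P$, we get $\mathcal{M}^c,P\nvDash\alpha$. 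Therefore $\mathcal{F}^c\nvDash\alpha$, so $\mathfrak{F}_1\nvDash\alpha$. This proves (i).

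For part (ii), the argument is structurally identical but requires one extra observation: the canonical frame for the extension ${\rm ILM}$-${\vee}$ must be verified to be an $\hat{N}'$-frame, i.e. to additionally satisfy Condition (4), $R_{N_2}^c\subseteq({\subseteq}^{-1})$. So I would first check that when we take $X^c$ to consist of prime theories with respect to ${\rm ILM}$-${\vee}$ (so that every $P\in X^c$ contains all instances of (A12)), the canonical relation $R_{N_2}^c$ lands inside $(\supseteq)$. Concretely: suppose $P R_{N_2}^c Q$ and $\beta\in Q$; we want $\beta\in P$. Using (A12) $\beta\vee{\sim}\beta\in P$ and primeness, either $\beta\in P$ (done) or ${\sim}\beta\in P$; the latter, by the definition of $R_{N_2}^c$, forces $\beta\notin Q$, contradicting $\beta\in Q$. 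Hence $Q\subseteq P$, i.e. $P R_{N_2}^c Q \Rightarrow P({\subseteq}^{-1})Q$. With Condition (4) in hand, $\mathcal{F}^c\in\mathfrak{F}_1'$, and the remainder of the argument is exactly as in (i), using that $\alpha\notin Th_{{\rm ILM}\text{-}{\vee}}(\emptyset)$ whenever $\nvdash_{{\rm ILM}\text{-}{\vee}}\alpha$, Corollary \ref{cor-pel}, and the Truth Lemma for this extension.

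The routine parts — that $Th(\emptyset)$ is a consistent theory, that the canonical valuation assigns upsets (monotonicity of membership along $\subseteq$, which follows from closure under deduction together with (A1)), and the mechanical unwinding of the semantic clauses — I would cite or dispatch briefly, since they mirror the ML and IL cases in \cite{Dosen1986} verbatim. The one genuinely new ingredient over \cite{Dosen1986} is the interaction of the two relations $R_{N_1}^c$ and $R_{N_2}^c$, but that has already been discharged in the Proposition showing $\mathcal{F}^c$ is an $\hat{N}$-frame (its Condition (3) verification used precisely the ${\rm ILM}_1$-axiom $\neg\neg{\sim}\top\leftrightarrow{\sim}\top$ via Lemma \ref{lem-can1}); and for ${\rm ILM}$-${\vee}$, the new ingredient is the short verification of Condition (4) just sketched.

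The main obstacle, such as it is, is not conceptual but bookkeeping: one must be careful that "theory", "prime theory", and the generated-theory notions are all taken with respect to the correct logic (${\rm ILM}$ versus ${\rm ILM}$-${\vee}$), so that the relevant axioms are available inside every prime theory when verifying the frame conditions — in particular that the $\hat{N}'$-frame condition genuinely needs the stronger logic. Once the canonical frame is correctly identified as a member of $\mathfrak{F}_1$ (resp. $\mathfrak{F}_1'$), completeness drops out immediately from the Truth Lemma, so no further estimates or constructions are needed.
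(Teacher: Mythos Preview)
Your proposal is correct and follows essentially the same approach as the paper: contrapose, use Corollary~\ref{cor-pel} to extend the set of theorems to a prime theory $P$ avoiding $\alpha$, and invoke the Truth Lemma on the canonical model; for (ii), verify Condition~(4) for $R_{N_2}^c$ using (A12) and primeness exactly as you do. The only cosmetic differences are that the paper writes $Th(\top)$ rather than $Th(\emptyset)$ (these coincide) and separately disposes of the case $\alpha=\bot$, which your uniform argument handles without a case split.
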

\begin{proof}$ $\\
(i) Let $\nvdash_{\rm ILM} \alpha$. If $\alpha =\bot$, then by definition of valuation, $\mathcal{F} \nvDash \bot$ for any $\hat{N}$-frame $\mathcal{F}$. Therefore, suppose $\alpha$ is not $\bot$. Then define $\Delta := Th(\top)$ and $\Gamma := {\rm dc}(\{ \alpha \})$. $\Delta$ is a consistent theory, because $\bot \notin \Delta$ using soundness. $\nvdash_{\rm ILM} \alpha \Rightarrow \alpha\notin \Delta$, which implies, by Corollary \ref{cor-pel}, there is $P \in X^c$ such that $\Delta \subseteq P$ and $\alpha \notin P$. Using Lemma \ref{lem-truth}, $ \mathcal{M}^c , P \nvDash \alpha$.

\noindent (ii) For ${\rm ILM}$-$\vee$, we have to show that the canonical frame for ${\rm ILM}$-$\vee$ belongs to $\mathfrak{F}_1'$,  and for all frames $\mathcal{F} \in \mathfrak{F}_1'$ and $\alpha \in F$, $\mathcal{F} \vDash \alpha \vee {\sim} \alpha$. Consider the canonical frame $\mathcal{F}^c :=(X^c,\subseteq,R_{N_1}^c,R_{N_2}^c)$. We have already shown that it is an $\hat{N}$-frame. We have to show that for all $P,Q, \in X^c$, if $P R_{N_2}^c Q$ then $Q \subseteq P$. Let $P$ and $Q$ be such that  $P R_{N_2}^c Q$ and $Q \nsubseteq P$. Then there exists $\alpha \in Q$ such that $\alpha \notin P$. We have $\alpha \vee {\sim} \alpha \in P$, as $\alpha \vee {\sim} \alpha$ is an axiom in ${\rm ILM}$-${\vee}$. Therefore, ${\sim} \alpha \in P$.  Using the definition of $R_{N_2}^c$, we have $\alpha \notin Q$, a contradiction.
\end{proof}


\section{Segerberg semantics for ILM}
\label{subsec-KripkeILM}

In this section, we shall see another relational semantics for {\rm ILM} based on Segerberg's semantics for {\rm ML} and its extensions. 
The latter is defined through  {\it $j$-frames}, which are triples of the form  $(W, \leq, Y_0)$, where $(W,\leq)$ is a poset with $W \neq \emptyset$ and $Y_0$ is an upset of $W$. $Y_0$ is used to define the semantics for negation. 
When $Y_0 = \emptyset$, the $j$-frame is called a {\it normal} frame, denoted simply by the pair $(W,\leq)$. Segerberg showed that {\rm ML} and {\rm IL} are determined by the class of all $j$-frames and normal frames respectively \cite{Segerberg1968}.

\subsection{Sub-normal frames}
\label{segerberg-ILM}

We shall now study the class of $j$-frames that can characterize {\rm ILM}. In  Section \ref{dosen-ILM}, for the   case of Do{\v{s}}en semantics for {\rm ILM},  two binary relations $R_{N_1}$ and $R_{N_2}$ were considered to define the semantics for the negations $\neg$ and ${\sim}$ respectively. 
In this case, we 
consider a poset $(W,\leq)$, and  two upsets $Y_{\neg}$ and $Y_{{\sim}}$ (say), for the  negations $\neg$ and ${\sim}$ respectively. 
Moreover, since  $\neg$ is intuitionistic, $Y_{\neg}$ must be the empty set, as for normal frames corresponding to {\rm IL.}

\noindent Recall next the following class of frames, first given by Woodruff \cite{Woodruff1970}, to characterize the  extension ${\rm JP'}$ (see Definition \ref{JP'}) of {\rm ML}.

\begin{definition}[Sub-normal frame]
\label{subnormal}
$ $\\
A {\it sub-normal frame}  {\rm \cite{Woodruff1970}} is a $j$-frame $\mathcal{F}:=(W,\leq,Y_0)$ satisfying the following condition:
\[ \forall x \in W  (\forall y \in W (x \leq y \Rightarrow \exists z \in W (y \leq z ~\&~ z \in Y_0 )) \Rightarrow x \in Y_0) \tag{D}. \]
The class of all sub-normal frames is denoted by $\mathfrak{F}_2$.\\
A {\it sub-normal identity frame} is a sub-normal frame $\mathcal{F}:=(W,\leq,Y_0)$ where $\leq$ is an identity relation on $W {\setminus Y_0}$, i.e.
\[\forall x,y \in W {\setminus} Y_0 ~ (x\leq y \Rightarrow y \leq x)  \tag{E}. \]
The class of all sub-normal identity frames is denoted by $\mathfrak{F}_2'$.
\end{definition}

\begin{theorem}
\label{char-JP'}
{\rm \cite{Woodruff1970}} ${\rm JP'}$ is determined by the class of all sub-normal frames.
\end{theorem}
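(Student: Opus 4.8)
The plan is the standard soundness-plus-canonical-model argument, adjusted for the fact that $\mathcal{L}'$ has no constant $\bot$. Recall that a valuation on a $j$-frame $(W,\leq,Y_0)$ assigns upsets to propositional variables, that truth is extended by the usual clauses for $\vee$ and $\wedge$, by $x\vDash\alpha\rightarrow\beta$ iff $\forall y\geq x\,(y\vDash\alpha\Rightarrow y\vDash\beta)$, and by $x\vDash{\sim}\alpha$ iff $\forall y\geq x\,(y\vDash\alpha\Rightarrow y\in Y_0)$; since $Y_0$ is an upset, $x\vDash{\sim}\top$ iff $x\in Y_0$. Validity in $\mathfrak F_2$ means truth at every world of every model over every sub-normal frame, and I read `determined' as: $\vdash_{{\rm JP}'}\alpha$ iff $\mathfrak F_2\vDash\alpha$, so both a soundness and a completeness half are needed.

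For soundness I would first prove persistence ($x\vDash\alpha$ and $x\leq y$ imply $y\vDash\alpha$), then that {\rm MP} preserves validity and that each axiom is valid. Axioms (A1)--(A7) are valid over any poset (soundness of positive logic, $Y_0$ being irrelevant), and (A13) is valid over any $j$-frame: if $x\vDash\alpha\rightarrow\beta$, $x\vDash\alpha\rightarrow{\sim}\beta$, and $y\geq x$ with $y\vDash\alpha$, then $y\vDash\beta$ and $y\vDash{\sim}\beta$, so taking $z=y$ in the clause for ${\sim}\beta$ gives $y\in Y_0$, hence $x\vDash{\sim}\alpha$. The only axiom using condition (D) is (P') ${\sim}{\sim}({\sim}\top\rightarrow\beta)$. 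Writing $\gamma:={\sim}\top\rightarrow\beta$: to get $x\vDash{\sim}{\sim}\gamma$ I must show each $y\geq x$ with $y\vDash{\sim}\gamma$ lies in $Y_0$, and by (D) it suffices that every $z\geq y$ has some $w\geq z$ with $w\in Y_0$; if $z\vDash\gamma$ then $z\in Y_0$ (from $y\vDash{\sim}\gamma$), so take $w=z$, and if $z\nvDash\gamma$ there is $w\geq z$ with $w\vDash{\sim}\top$, i.e. $w\in Y_0$ --- either way (D) yields $y\in Y_0$.

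For completeness I would set up the usual machinery over ${\rm JP}'$: theories (closed under {\rm MP} and $\wedge$, containing all ${\rm JP}'$-theorems), a theory being \emph{proper} if it is not all of $F$ and \emph{prime} if it is proper and has the disjunction property; the deduction theorem; and the extension lemma in its corollary form, namely that a proper theory $T$ with $\alpha\notin T$ extends to a prime $P\supseteq T$ with $\alpha\notin P$ (by the Zorn argument of Lemma~\ref{lem-pel} and Corollary~\ref{cor-pel}, with `proper' in place of `consistent'). Put $X^c$ equal to the set of prime theories, $Y^c_0:=\{P\in X^c:{\sim}\top\in P\}$ (an upset, as $\subseteq$ preserves ${\sim}\top$) and $\mathcal F^c:=(X^c,\subseteq,Y^c_0)$. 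The step I expect to be the main obstacle is verifying that $\mathcal F^c$ satisfies (D). Arguing contrapositively, given ${\sim}\top\notin P$ I would produce a prime $Q\supseteq P$ none of whose prime extensions lies in $Y^c_0$: take $S:=P\cup\{{\sim}\top\rightarrow\gamma:\gamma\in F\}$; if ${\sim}\top\in Th(S)$, then by the deduction theorem $P\vdash({\sim}\top\rightarrow\beta)\rightarrow{\sim}\top$ for a conjunction $\beta$ of finitely many $\gamma$'s, but $(({\sim}\top\rightarrow\beta)\rightarrow{\sim}\top)\rightarrow{\sim}\top$ is a ${\rm JP}'$-theorem (the Peirce instance equivalent to (P') via the ML-theorem ${\sim}\delta\leftrightarrow(\delta\rightarrow{\sim}\top)$), so $P\vdash{\sim}\top$, a contradiction; hence ${\sim}\top\notin Th(S)$, $Th(S)$ is proper, and the extension lemma gives a prime $Q\supseteq P$ with ${\sim}\top\notin Q$ and ${\sim}\top\rightarrow\gamma\in Q$ for every $\gamma$. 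Any prime $R\supseteq Q$ with ${\sim}\top\in R$ would then contain every formula, contradicting properness; so $Q$ witnesses the failure of the antecedent of (D) at $P$, and $\mathcal F^c\in\mathfrak F_2$.

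Finally I would take the canonical valuation $v^c(p):=\{P\in X^c:p\in P\}$ and prove the Truth Lemma $\mathcal M^c,P\vDash\alpha\iff\alpha\in P$ by induction on $\alpha$, the only nonroutine case being ${\sim}\alpha$, where one shows ${\sim}\alpha\in P$ iff $\forall Q\in X^c\,(P\subseteq Q\ \&\ \alpha\in Q\Rightarrow{\sim}\top\in Q)$ --- forward via ${\sim}\alpha\leftrightarrow(\alpha\rightarrow{\sim}\top)$ and {\rm MP}, backward by noting that $\alpha\rightarrow{\sim}\top\notin P$ when ${\sim}\alpha\notin P$, so $Th(P\cup\{\alpha\})$ omits ${\sim}\top$ and extends to a prime $Q$ with $\alpha\in Q$, ${\sim}\top\notin Q$ --- and then combines this with the induction hypothesis and the definition of $Y^c_0$. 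Given the Truth Lemma, $\nvdash_{{\rm JP}'}\alpha$ yields (since $Th(\top)$ is proper, e.g. because $\nvdash p$ by soundness) a prime $P\supseteq Th(\top)$ with $\alpha\notin P$, hence $\mathcal M^c,P\nvDash\alpha$ and $\mathfrak F_2\nvDash\alpha$; together with the soundness half this is the asserted determination. The two points that need care are the reformulation of consistency in the $\bot$-free language and the Peirce-law computation inside $P$ that drives the verification of (D).
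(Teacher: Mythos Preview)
The paper does not give its own proof of this theorem: it is stated with a citation to Woodruff and used as a black box. So there is nothing in the paper to compare your argument against directly.

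That said, your sketch is a correct and standard soundness--canonical-model proof, and it is precisely in the spirit of the surrounding material (the paper carries out the analogous argument for {\rm ILM} over $\hat N$-frames in Section~\ref{subsec-char}). The soundness half is fine; in particular your case split for (P${}'$) correctly isolates where condition (D) is used. In the completeness half, the key step --- showing the canonical frame satisfies (D) by building, for each prime $P$ with ${\sim}\top\notin P$, a prime $Q\supseteq P$ containing every ${\sim}\top\rightarrow\gamma$ --- is exactly the right idea, and your use of the Peirce instance $(({\sim}\top\rightarrow\beta)\rightarrow{\sim}\top)\rightarrow{\sim}\top$ together with the ML equivalence $\bigwedge_i({\sim}\top\rightarrow\gamma_i)\leftrightarrow({\sim}\top\rightarrow\bigwedge_i\gamma_i)$ to rule out ${\sim}\top\in Th(S)$ is correct. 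The reformulation of ``consistent'' as ``proper'' in the $\bot$-free language, and the ${\sim}\alpha$ case of the Truth Lemma via ${\sim}\alpha\leftrightarrow(\alpha\rightarrow{\sim}\top)$, are handled appropriately. Nothing is missing.
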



\noindent As observed in Theorem \ref{thm-JP}, {\rm ILM} is an extension of ${\rm JP'}$. So it is expected that models of {\rm ILM} and {\rm ILM}-${\vee}$, in this semantics, would be based on sub-normal frames.
Let us give the basic definitions. 
A {\it valuation} of $\mathcal{L}$ on a sub-normal frame $\mathcal{F}:=(W,\leq,Y_0)$ is a mapping $v:PV \rightarrow \mathcal{P}(W)$ such that $v(p)$ is an upset for each $p \in PV$. 
A pair $\mathcal{M} :=(\mathcal{F}, v)$ is called a {\it sub-normal model} on the sub-normal frame $\mathcal{F}$. The {\rm truth} of a formula $\alpha \in F$ at a world $w \in W$ in the model $\mathcal{M} $ ({\rm notation}: $\mathcal{M},w \vDash \alpha$) for propositional variables, $\vee$, $\wedge$, $\bot$ and $\top$ is given in the standard way; for $\rightarrow$ as in Definition \ref{ILM-truth}(1); and for $\neg $ and ${\sim}$, 
\begin{enumerate}[label={{\rm \arabic*.}}, leftmargin=1.5 \parindent, noitemsep] 
\item $\mathcal{M}, w \vDash  {\sim} \alpha \Leftrightarrow \mbox{ for all } w' \in W, \mbox{ if } w \leq w' \mbox{ and } \mathcal{M}, w' \vDash \alpha \mbox{ then } w' \in Y_0.$
\item $\mathcal{M}, w \vDash  \neg \alpha \Leftrightarrow \mbox{ for all } w' \in X (w \leq w' \Rightarrow \mathcal{M}, w' \nvDash \alpha).$
\end{enumerate}

Standard definitions and notations give the notions of truth of a formula in a sub-normal model, its validity  in a sub-normal frame and validity in a class of sub-normal frames. Some properties of a sub-normal model $\mathcal{M}$ are as follows -- these can be derived in a straightforward manner.
\begin{proposition} \noindent
\begin{enumerate}[label={{\rm \arabic*.}}, leftmargin=1.5 \parindent, noitemsep, topsep=0pt] 
\item For $w \in W$, $\mathcal{M},w \vDash {\sim}\top \Leftrightarrow w \in Y_0$.
\item For all $w' \in W$, if $\mathcal{M}, w \vDash \alpha$ and $w \leq w'$, then $\mathcal{M}, w' \vDash \alpha$.
\item For all $w' \in W$, $\mathcal{M}, w \vDash \neg \alpha$ if and only if $\forall w' \in W(\exists w'' \in W(w \leq w'' ~\&~ w' \leq w'') \Rightarrow w' \nvDash \alpha)$.
\item For all $w' \in W$, $\mathcal{M}, w \vDash {\sim} \alpha$ if and only if $\forall w' \in W\bigl(w \leq w' \Rightarrow \bigl(w' \vDash \alpha \Rightarrow \exists w'' \in W(w' \leq w'' ~\&~ w'' \in Y_0)\bigr)\bigr)$.
\end{enumerate}
\end{proposition}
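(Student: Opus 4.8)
The plan is to check the four claims one after another, using nothing more than the truth clauses for $\rightarrow$, $\neg$ and ${\sim}$, the fact that $Y_0$ is an upset, reflexivity and transitivity of $\leq$, and --- for the last claim only --- the sub-normal frame condition~(D).

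For the first claim I would unfold the clause for ${\sim}$ applied to the formula $\top$: since $\mathcal{M},w'\vDash\top$ for every $w'\in W$, the clause reduces to ``for all $w'\in W$, if $w\leq w'$ then $w'\in Y_0$''. Instantiating at $w':=w$ gives $w\in Y_0$, and conversely if $w\in Y_0$ then every $\leq$-successor of $w$ lies in $Y_0$ because $Y_0$ is an upset; hence $\mathcal{M},w\vDash{\sim}\top\Leftrightarrow w\in Y_0$. For the second claim (persistence) I would induct on the number of connectives in $\alpha$: the base cases are immediate since $v(p)$ is an upset by the definition of a valuation and $\top,\bot$ are trivial; the cases $\beta\wedge\gamma$ and $\beta\vee\gamma$ follow from the induction hypothesis; and for $\beta\rightarrow\gamma$, $\neg\beta$ and ${\sim}\beta$ one notes that the clauses quantify over $\leq$-successors, so if $w\leq w'$ then by transitivity every successor of $w'$ is a successor of $w$ and the defining implication passes from $w$ to $w'$. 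No appeal to~(D) is needed here.

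For the third and fourth claims I would derive the displayed biconditionals by comparing the ``zig-zag'' formulation with the official clause for the relevant negation. In each case the left-to-right direction is trivial by reflexivity of $\leq$: take the witness $w''$ to be the given $w'$. The right-to-left direction of the third claim is equally quick --- given $w'\geq w$, apply the hypothesis with $w'':=w'$ to obtain $w'\nvDash\alpha$, which is precisely the official clause for $\neg\alpha$ at $w$.

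The one step that is not bookkeeping is the right-to-left direction of the fourth claim, and it is exactly here that~(D) is used. Assume the ``zig-zag'' condition for ${\sim}\alpha$ at $w$, and fix $w'\in W$ with $w\leq w'$ and $\mathcal{M},w'\vDash\alpha$; the goal is $w'\in Y_0$. For an arbitrary $w'''\geq w'$ we have $w'''\geq w$ by transitivity and $\mathcal{M},w'''\vDash\alpha$ by the second claim, so the hypothesis yields some $w''''\geq w'''$ with $w''''\in Y_0$. Thus, for every $w'''\geq w'$ there is a $Y_0$-point above $w'''$, which is exactly the antecedent of~(D) instantiated at $w'$; (D) then forces $w'\in Y_0$, whence $\mathcal{M},w\vDash{\sim}\alpha$. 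I expect this last passage --- recognising that persistence (the second claim) together with~(D) is precisely what bridges ``some $Y_0$-point lies above $w'$'' and ``$w'$ itself lies in $Y_0$'' --- to be the only real obstacle; everything else is a straightforward unwinding of the semantic definitions.
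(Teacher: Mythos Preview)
Your argument is essentially correct and, since the paper itself gives no proof (it just says ``these can be derived in a straightforward manner''), yours is the only detailed one on offer. The use of condition~(D) together with persistence for the right-to-left direction of item~4 is exactly right, and that is indeed the only non-trivial step.

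There is, however, one slip in your treatment of item~3. You write that for both items~3 and~4 ``the left-to-right direction is trivial by reflexivity of $\leq$: take the witness $w''$ to be the given $w'$''. This is fine for item~4, where the existential $\exists w''$ sits in the \emph{consequent} of the zig-zag condition and you are free to instantiate it. But in item~3 the existential sits in the \emph{antecedent}: assuming $\mathcal{M},w\vDash\neg\alpha$ and given $w'$ with a common upper bound $w''$ (so $w\leq w''$ and $w'\leq w''$), you must conclude $w'\nvDash\alpha$. Here $w''$ is handed to you, not chosen, and in general $w\leq w'$ need not hold. The correct argument is: from the official clause for $\neg$ you get $w''\nvDash\alpha$ (since $w\leq w''$), and then the contrapositive of persistence (your item~2) with $w'\leq w''$ yields $w'\nvDash\alpha$. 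So the left-to-right direction of item~3 uses persistence, not just reflexivity; your description effectively repeats the right-to-left argument twice. The fix is one line and you already have the needed tool.
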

Now, as done for $\hat{N}$-frames in the previous section, let us consider the ${\rm ILM}_1$-axiom  $\neg \neg {\sim} \top \leftrightarrow {\sim} \top$. Let $\mathcal{M}$ be any sub-normal model. Expanding $\mathcal{M} \vDash \neg \neg {\sim} \top \leftrightarrow {\sim} \top$, 
one obtains the following.
\begin{align*}
\forall w \in W & \bigl(\forall v \in W \bigl(w \leq v \Rightarrow \exists v' \in W (v \leq v' ~\&~ v' \in Y_0 ) \bigr) \Leftrightarrow w \in Y_0\bigr).
\end{align*}
The reverse direction of the implication in the above holds anyway. Indeed, consider the contraposition, and let $v \in W$ be such that $w \leq v$ and $\forall v' \in W (v \leq v' \Rightarrow v' \notin Y_0)) $. If $w \in Y_0$ then $v \in Y_0$ (as $Y_0$ is an upset). However, $v\leq v$ implies $v \notin Y_0$, a contradiction.\\
The forward direction of the implication  is exactly Condition (D). 
Therefore, we expect that ${\rm ILM}$ will be complete with respect to the class $\mathfrak{F}_2$ of sub-normal frames. Moreover, Condition (E) on sub-normal identity frames corresponds to the formula $\alpha \vee {\sim} \alpha$ \cite{Segerberg1968}. 
 Thus, we also expect that ${\rm ILM}$-${\vee}$ will be complete with respect to the class $\mathfrak{F}'_2$ of sub-normal identity frames.
In the next section, we prove these completeness results by obtaining relationships between the classes of sub-normal (sub-normal identity) frames and $\hat{N}$-frames ($\hat{N}'$-frames).


\subsection{\hspace{-2.5pt}Inter-translation between {\texorpdfstring{$\hat{N}$}~}-frames and sub-normal frames}
\label{inter}
Do{\v{s}}en had shown that there is an inter-translation between strictly condensed $J$-frames and $j$-frames for {\rm ML}, preserving the truth of any formula $\alpha \in F$ at any world $w$ of the respective frame. We observe that this inter-translation can be extended to the case of {\rm ILM}, i.e. starting from a sub-normal frame, we can obtain an $\hat{N}$-frame; and from an $\hat{N}$-frame, we can obtain a sub-normal frame -- preserving truth. We  only give the highlights of the proofs.

\begin{theorem}
\label{trans1}
Let $\mathcal{F}:=(W,\leq,Y_0)$ be a sub-normal frame, i.e $\mathcal{F} \in \mathfrak{F}_2$. Define the relations $R_{N_1}$ and $R_{N_2}$ over $W$ for all $x,y \in W$ as:
\begin{enumerate}[label=(\Alph*), leftmargin=2 \parindent, noitemsep]
\item $x R_{N_1} y$ if and only if $\exists z \in W(x \leq z ~\&~ y \leq z)$.
\item $x R_{N_2} y$ if and only if $\exists z \in W(x \leq z ~\&~ y \leq z ~\&~ z \notin Y_0)$.
\end{enumerate}
Then we have the following.
\begin{enumerate}[label={{\rm \arabic*.}}, leftmargin=1.5 \parindent, noitemsep, topsep=0pt] 
\item $Y_0 = \{z \in W ~|~ \forall x \in W (z \cancel{R}_{N_2} x) \}$.
\item $\Phi(\mathcal{F}):= (W,\leq,R_{N_1},R_{N_2})$ is an $\hat{N}$-frame.  If $\mathcal{F}$ is a sub-normal identity frame then $\Phi(\mathcal{F})$ is an $\hat{N}'$-frame.
\item If $v$ is a valuation on $\mathcal{F}$, then $v$ is a valuation on $\Phi(\mathcal{F})$ such that for all $\alpha \in F$ and $x \in W$,
\begin{enumerate}[label={{\rm (\alph*)}}, leftmargin=1.5 \parindent, noitemsep, topsep=0pt] 
\item $(\mathcal{F}, v) , x \vDash \alpha \Leftrightarrow (\Phi(\mathcal{F}),v), x \vDash \alpha$, 
\item $(\mathcal{F}, v) \vDash \alpha \Leftrightarrow (\Phi(\mathcal{F}),v) \vDash \alpha$, and 
\item $\mathcal{F} \vDash \alpha \Leftrightarrow \Phi(\mathcal{F}) \vDash \alpha$.
\end{enumerate}
\end{enumerate}
\end{theorem}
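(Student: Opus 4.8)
The plan is to establish the three assertions in order, the second one — that $\Phi(\mathcal{F})$ is an $\hat{N}$-frame — being the substantive part and the third a routine induction on formula complexity. \textbf{Item 1.} For $Y_0 \subseteq \{z : \forall x\,(z \cancel{R}_{N_2} x)\}$: if $z \in Y_0$ and $z R_{N_2} x$, there is a common upper bound $w$ of $z$ and $x$ with $w \notin Y_0$, but $z \leq w$ and $Y_0$ an upset force $w \in Y_0$, a contradiction. For the reverse inclusion: if $z \notin Y_0$ then $w := z$ witnesses $z R_{N_2} z$. This equivalence — ``$z$ has no $R_{N_2}$-successor'' iff ``$z \in Y_0$'' — is precisely the dictionary entry that lets the Do{\v{s}}en-style Condition (3) of Definition \ref{def-Nhatframes} speak to the Segerberg-style Condition (D).

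\textbf{Item 2.} I would verify the clauses of Definition \ref{def-Nhatframes} one at a time. That $(W,\leq,R_{N_1})$ is a strictly condensed $H$-frame: the poset is inherited; $R_{N_1}$ is reflexive (common upper bound $x$) and symmetric (the defining formula is symmetric); $(\leq R_{N_1} \leq^{-1}) \subseteq R_{N_1}$ is seen by pushing a common upper bound through the two $\leq$-steps; and Condition (4) of Definition \ref{J-frame} is witnessed by taking $z$ to be the common upper bound itself. That $(W,\leq,R_{N_2})$ is a strictly condensed $J$-frame is the same bookkeeping, now additionally carrying the tag ``$\notin Y_0$'' along, which survives because it sits on the common upper bound. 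For Condition (3): by Item 1 it unwinds to ``if $\forall y\,(x R_{N_1} y \Rightarrow \exists z\,(y R_{N_1} z \ \&\ z \in Y_0))$ then $x \in Y_0$''. I would deduce from this antecedent the antecedent of Condition (D) at $x$: given $x \leq y$ we have $x R_{N_1} y$ (common upper bound $y$), hence some $z \in Y_0$ with $y R_{N_1} z$, and a common upper bound $w$ of $y$ and $z$ lies in $Y_0$ (upset) with $y \leq w$; Condition (D) then yields $x \in Y_0$. Finally, if $\mathcal{F}$ is a sub-normal identity frame then $R_{N_2} \subseteq (\leq^{-1})$: from $x R_{N_2} y$ via $w \notin Y_0$ we get $x, y \notin Y_0$ (else $w \in Y_0$), so $x, w, y \in W \setminus Y_0$, and Condition (E) applied to $x \leq w$ and to $y \leq w$ gives $w \leq x$ and $w \leq y$, whence $x = w = y$ and in particular $y \leq x$.

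\textbf{Item 3.} Since $\Phi(\mathcal{F})$ and $\mathcal{F}$ share the same underlying poset, a map $v\colon PV \to \mathcal{P}(W)$ is a valuation on one iff on the other, so (a) proceeds by induction on the structure of $\alpha$. Atoms, $\top$, $\bot$, $\wedge$ and $\vee$ are immediate, and the clause for $\rightarrow$ is literally identical in the two semantics. Only $\neg$ and ${\sim}$ require an argument: using persistence of truth under $\leq$ (recorded in the preceding Proposition on sub-normal models) together with the inductive hypothesis, one shows the sub-normal clause $\forall w'(w \leq w' \ \&\ w' \vDash \alpha \Rightarrow w' \in Y_0)$ for ${\sim}\alpha$ is equivalent to the $\hat{N}$-clause $\forall w'(w R_{N_2} w' \Rightarrow w' \nvDash \alpha)$ — the one nontrivial direction instantiating $w R_{N_2} w'$ by $w'$ itself when $w' \notin Y_0$ — and likewise $\forall w'(w \leq w' \Rightarrow w' \nvDash \alpha)$ for $\neg\alpha$ matches $\forall w'(w R_{N_1} w' \Rightarrow w' \nvDash \alpha)$. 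Then (b) is (a) quantified over all worlds, and (c) follows from (b) since the two frames admit precisely the same valuations.

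\textbf{Main obstacle.} There is no genuinely hard step here; the entire argument is relation-chasing. The one place that rewards care is Condition (3) of the $\hat{N}$-frame — getting the nested quantifiers right and correctly routing the antecedent through Item 1 and Condition (D). A close second is keeping track, in the $\neg$ and ${\sim}$ cases of the induction, of exactly where persistence of truth is invoked to transfer the side condition (``$\notin Y_0$'', respectively ``$\nvDash \alpha$'') between a point and one of its upper bounds.
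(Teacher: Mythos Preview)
Your proposal is correct and follows essentially the same approach as the paper: the paper explicitly proves only the ``highlights'' (Condition (3) of the $\hat{N}$-frame via Item 1 and Condition (D), and the $\hat{N}'$-frame clause via Condition (E)), and your treatment of these matches theirs step for step. The remaining parts (Item 1, the $H$- and $J$-frame verifications, and the induction in Item 3) are left implicit in the paper, and your sketch of them is accurate.
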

\begin{proof}$ $\\
2. We shall only prove that   Condition (3) of Definition \ref{def-Nhatframes} is satisfied, i.e. $\forall x \in W ~ \bigl( \forall y \in W  \bigl(x R_{N_1} y \Rightarrow \exists z \in W  (y R_{N_1} z ~\&~ \forall z'\in W ~ (z \cancel{R}_{N_2} z'))\bigr) \Rightarrow \forall z''\in W ~ (x \cancel{R}_{N_2} z'')\bigr)$.\\
Using the expression for $Y_0$ obtained in (1), this condition is equivalent to the following: $\forall x \in W ~ \bigl( \forall y \in W  \bigl(x R_{N_1} y \Rightarrow \exists z \in W  (y R_{N_1} z ~\&~ z \in Y_0)\bigr) \Rightarrow x \in Y_0 \bigr)$. Let $x \in W$ and let\\
$\forall y \in W  (x R_{N_1} y \Rightarrow \exists z \in W  (y R_{N_1} z ~\&~ z \in Y_0))$. \hfill{(*)}\\
To show $x \in Y_0$, we shall use Condition (D) of the sub-normal frame $(W,\leq,Y_0)$, i.e. $\forall x \in W  \bigl(\forall y \in W \bigl(x \leq y \Rightarrow \exists z \in W (y \leq z ~\&~ z \in Y_0 )\bigr) \Rightarrow x \in Y_0 \bigr)$.\\
{\it Claim}: $\forall y \in W (x \leq y \Rightarrow \exists z \in W (y \leq z \mbox{ and } z \in Y_0 ))$.\\ Indeed, let $x \leq y$, for $y \in W$. Using (A) and $y \leq y$, we have $x R_{N_1} y$. Then using (*), we have $z' \in W$ such that $y R_{N_1} z'$ and $z' \in Y_0$. Using (A) on $y R_{N_1} z'$, there exists $z \in W$ such that $y \leq z$ and $z' \leq z$. Since $Y_0$ is an upset and $z' \in Y_0$, $z \in Y_0$. Thus we have the claim.\\
Therefore, using Condition (D), we have $x \in Y_0$.
\vspace{1mm}

\noindent Now let $\mathcal{F} \in \mathfrak{F}_2'$. We have to show that $R_{N_2} \subseteq (\leq^{-1})$. Let $x,y \in W$ such that $x R_{N_2} y$. Condition (B) implies that there exists $z \in W$ such that $x \leq z$, $y \leq z$, and $z \notin Y_0$. Since $Y_0$ is an upset, $z\notin Y_0$ implies $x,y \notin Y_0$. $\mathcal{F} \in \mathfrak{F}_2'$ and $y,z \notin Y_0$, $y\leq z$ imply that $z \leq y$. Then, $x \leq z$ and $z \leq y$ imply $x \leq y$. Again using $\mathcal{F} \in \mathfrak{F}_2'$, $x,y \notin Y_0$ and $x\leq y$, we get  $y \leq x$.
\end{proof}

\begin{theorem}
\label{trans2}
Let $\mathcal{G}:=(W,\leq,R_{N_1},R_{N_2})$ be an $\hat{N}$-frame. Define 
\begin{enumerate}[label=(\Alph*), leftmargin=2 \parindent, noitemsep]
\item[(C)] $Y_0 := \{z \in W ~ \mid ~ \forall x \in W (z \cancel{R}_{N_2} x) \}.$
\end{enumerate} 
Then we have the following.
\begin{enumerate}[label={{\rm \arabic*.}}, leftmargin=1.5 \parindent, noitemsep, topsep=0pt] 
\item 
\begin{enumerate}[label={{\rm (\alph*)}}, leftmargin=1.5 \parindent, noitemsep, topsep=0pt] 
	\item $x R_{N_2} y$ if and only if $\exists z \in W (x \leq z ~\&~ y \leq z ~\&~ z \notin Y_0)$, and
	\item $x R_{N_1} y$ if and only if $\exists z \in W (x \leq z ~\&~  y \leq z)$.
\end{enumerate}
\item $\Psi(\mathcal{G}):=(W,\leq,Y_0)$ is a sub-normal frame.  If $\mathcal{G}$ is an $\hat{N}'$-frame then $\Psi(\mathcal{G})$ is a sub-normal identity frame.
\item If $v$ is a valuation on $\mathcal{G}$, then $v$ is a valuation on $\Psi(\mathcal{G})$ such that for all $\alpha \in F$ and $x \in W$,
\begin{enumerate}[label={{\rm (\alph*)}}, leftmargin=1.5 \parindent, noitemsep, topsep=0pt] 
\item $(\mathcal{G}, v) , x \vDash \alpha \Leftrightarrow (\Psi(\mathcal{G}),v), x \vDash \alpha$, 
\item $(\mathcal{G}, v) \vDash \alpha \Leftrightarrow (\Psi(\mathcal{G}),v) \vDash \alpha$, and 
\item $\mathcal{G} \vDash \alpha \Leftrightarrow \Psi(\mathcal{G}) \vDash \alpha$.
\end{enumerate}
\end{enumerate}
\end{theorem}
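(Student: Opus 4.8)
The plan is to mirror the structure of Theorem \ref{trans1}, exploiting the symmetry between the two constructions $\Phi$ and $\Psi$. First I would prove part 1, the characterizations of $R_{N_1}$ and $R_{N_2}$ in terms of $\leq$ and the set $Y_0$ defined by (C). For part 1(b), the inclusion $\exists z\,(x\leq z\ \&\ y\leq z)\Rightarrow x R_{N_1}y$ follows from condition (2) of the strictly condensed $H$-frame (namely $({\leq}R_{N_1}{\leq^{-1}})\subseteq R_{N_1}$) together with reflexivity of $R_{N_1}$; the converse follows from condition (4) of the $J$/$H$-frame applied to $R_{N_1}$ (the "strictly condensed" clause produces a common upper bound $z$ with $xR_{N_1}z$, but what we actually want is just $x\leq z\ \&\ y\leq z$, so a little care is needed — I would use the clause to get $z$ above both $x$ and $y$). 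For part 1(a), one direction uses condition (2) for $R_{N_2}$ plus the fact that $z\notin Y_0$ means $z$ is $R_{N_2}$-related to something; the other direction again uses the strictly-condensed clause (4) for $R_{N_2}$ to manufacture the common upper bound $z$, and one checks $z\notin Y_0$ because $xR_{N_2}z$ witnesses this via symmetry of $R_{N_2}$.

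Next, for part 2, I would show $\Psi(\mathcal{G})=(W,\leq,Y_0)$ is a sub-normal frame. That $(W,\leq)$ is a poset is immediate, and that $Y_0$ is an upset follows from condition (2) applied to $R_{N_2}$: if $z\in Y_0$ and $z\leq z'$, and $z'R_{N_2}x$ for some $x$, then $z\,({\leq}R_{N_2})\,x$, hence after composing with $\leq^{-1}$ (reflexively) $zR_{N_2}x$, contradicting $z\in Y_0$. The heart of part 2 is verifying condition (D). Using the characterization 1(a) just established, condition (D) unwinds to exactly condition (3) of Definition \ref{def-Nhatframes} rewritten via the identity $Y_0=\{z : \forall x\,(z\cancel{R}_{N_2}x)\}$ — this is the same equivalence already recorded in the text just before Theorem \ref{trans1}, so the argument is essentially a back-translation of that computation. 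For the refinement: if $\mathcal{G}$ is an $\hat{N}'$-frame, then $R_{N_2}\subseteq(\leq^{-1})$, and I must show condition (E), i.e. $\leq$ restricted to $W\setminus Y_0$ is symmetric. If $x,y\notin Y_0$ and $x\leq y$: since $y\notin Y_0$ there is $w$ with $yR_{N_2}w$; using condition (2) on $R_{N_2}$ with $x\leq y$, get $xR_{N_2}w$; then $R_{N_2}\subseteq(\leq^{-1})$ gives... actually the cleaner route is: from $x\notin Y_0$ we get $xR_{N_2}w'$ for some $w'$, and symmetry plus $R_{N_2}\subseteq(\leq^{-1})$ should force $y\leq x$ — I would verify this by chasing $x\leq y$ through $R_{N_2}$ and the inclusion into $\leq^{-1}$, which is the $\hat{N}'$-analogue of the last paragraph of the proof of Theorem \ref{trans1}.

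Finally, for part 3, the key is the truth-preservation claim 3(a), proved by induction on the number of connectives in $\alpha$. The atomic case and the $\wedge,\vee,\bot,\top$ cases are routine since the underlying poset and valuation are unchanged; the $\rightarrow$ case is identical in both frame types (Definition \ref{ILM-truth}(1) and its sub-normal counterpart coincide). The negation cases are where part 1 does its work: for ${\sim}\alpha$, compare clause 3 of Definition \ref{ILM-truth} ($\forall y\,(xR_{N_2}y\Rightarrow y\nvDash\alpha)$) with the sub-normal clause ($\forall w'\,(x\leq w'\ \&\ w'\vDash\alpha\Rightarrow w'\in Y_0)$); using 1(a) and upward-persistence of truth (which holds in both models) these become equivalent. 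Similarly for $\neg\alpha$ using 1(b) and the fact that $Y_{\neg}=\emptyset$. Parts 3(b) and 3(c) are immediate consequences of 3(a) by quantifying over worlds and over valuations. The main obstacle I anticipate is getting the direction $xR_{N_1}y \Rightarrow \exists z\,(x\leq z\ \&\ y\leq z)$ cleanly from the strictly-condensed axiom, since that axiom delivers a $z$ with the extra property $xR_{N_1}z$ rather than directly the bare common-upper-bound statement — one has to be sure the axiom as stated (condition (4)) yields $x\leq z$ and $y\leq z$ simultaneously, and likewise track the $z\notin Y_0$ side-condition carefully in the $R_{N_2}$ case; everything else is a faithful transcription of Do\v{s}en's argument and of the proof of Theorem \ref{trans1}.
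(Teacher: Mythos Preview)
Your overall strategy---establish the recoverability equations 1(a),(b), use them to reduce condition (D) to condition (3) of Definition \ref{def-Nhatframes}, handle the $\hat{N}'$ refinement separately, then do a routine induction for part 3---is exactly the paper's approach. Two points of execution, however, do not go through as you have written them.

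First, for condition (D): you write that ``using the characterization 1(a) \ldots\ condition (D) unwinds to exactly condition (3).'' But (D) is phrased entirely in terms of $\leq$ and $Y_0$, while condition (3) is phrased in terms of $R_{N_1}$ and $Y_0$; the bridge between $\leq$-statements and $R_{N_1}$-statements is 1(b), not 1(a). Moreover it is not a one-line rewrite: from the hypothesis of (D) one must manufacture the hypothesis of (3), and this requires a short chase. The paper does it thus: given $xR_{N_1}y$, apply 1(b) to obtain $z$ with $x\leq z$ and $y\leq z$; apply the (D)-hypothesis at $x\leq z$ to get $z'$ with $z\leq z'$ and $z'\in Y_0$; then from reflexivity of $R_{N_1}$ (so $z'R_{N_1}z'$) together with $y\leq z\leq z'$ and $({\leq}\,R_{N_1}\,{\leq^{-1}})\subseteq R_{N_1}$, conclude $yR_{N_1}z'$. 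This verifies the inner hypothesis of (3), and (3) then yields $x\in Y_0$.

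Second, for the $\hat{N}'$ refinement (condition (E)): neither of the two routes you sketch reaches $y\leq x$. From $y\notin Y_0$ and $yR_{N_2}w$ you get (via $R_{N_2}\subseteq{\leq^{-1}}$) only $w\leq y$, and your ``cleaner route'' starting from $xR_{N_2}w'$ similarly yields $w'\leq x$, not $y\leq x$. The correct move, which the paper uses, is to apply 1(a) with witness $z:=y$: since $x\leq y$, $y\leq y$, and $y\notin Y_0$, 1(a) gives $xR_{N_2}y$; now $R_{N_2}\subseteq{\leq^{-1}}$ immediately gives $y\leq x$. Once you make these two corrections, your argument coincides with the paper's.
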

\begin{proof}$ $\\
2. We only show that the sub-normal frame satisfies Condition (D) of Definition \ref{subnormal}, i.e.$(\forall y \in W (x \leq y \Rightarrow \exists z \in W (y \leq z \mbox{ and } z \in Y_0 )) \Rightarrow x \in Y_0)$.
Let $x\in W$ be such that \\
$\forall y \in W (x \leq y \Rightarrow \exists z \in W (y \leq z \mbox{ and } z \in Y_0))$.  \hfill{(**)}\\
To get $x \in Y_0$, we utilize Condition (3) of $\hat{N}$-frames and show:\\
\centerline {$\forall y \in W  (x R_{N_1} y \Rightarrow \exists z \in W  (y R_{N_1} z \mbox{ and } \forall z'\in W ~ (z \cancel{R}_{N_2} z')))$,}\\
\centerline{i.e. $\forall y \in W  (x R_{N_1} y \Rightarrow \exists z \in W  (y R_{N_1} z \mbox{ and } z \in Y_0))$.}\\
Let $xR_{N_1}y$. Using 1(b), there exists $z \in W$ such that $x \leq z$ and $y \leq z$. Using $x \leq z$ and (**), we have $z' \in W$ such that $z \leq z'$ and $z' \in Y_0$. Since $R_{N_1}$ is reflexive, we have $z'R_{N_1}z'$. Using $y \leq z \leq z'$ and $(\leq R_{N_1} \leq^{-1}) \subseteq R_{N_1}$, we have $y R_{N_1} z'$. \\
Thus, Condition (3) of $\hat{N}$-frames gives $\forall z''\in W ~ (x \cancel{R}_{N_2} z'')$, i.e. $x \in Y_0$.
\vspace{1mm}

\noindent Now let $\mathcal{G} \in \mathfrak{F}_1'$. We have to show that $\forall x,y \in W {\setminus} Y_0 ~ (x\leq y \Rightarrow y \leq x)$. Let $x,y \in W {\setminus} Y_0$ and $x \leq y$. Using 1(a), $x \leq y$, $y \leq y$ and $y \notin Y_0$ imply $x R_{N_2} y$. As $\mathcal{G} \in \mathfrak{F}_1'$, $R_{N_2} \subseteq (\leq^{-1})$. Thus, $y \leq x$.
\end{proof}

\begin{obs}
{\rm In \cite{Dosen1986}, for any strictly condensed $J$-frame $(W,\leq,R_N)$, $Y_0$ is defined as follows. 
\[ z \in Y_0 \Leftrightarrow \exists x,y \in W (x \leq z ~\&~ y \leq z ~\&~ x \cancel{R}_N y). \]
However, in Theorem \ref{trans2}, we have defined $Y_0 :=\{x\in W ~|~ \forall z (x \cancel{R}_N z) \}$, in order to give a simpler expression for $Y_0$. In fact, it can be easily seen that the two expressions are equivalent.
}
\end{obs}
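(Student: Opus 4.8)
The plan is to prove that the two sets coincide by mutual inclusion, working inside an arbitrary strictly condensed $J$-frame $(W,\leq,R_N)$. Write $Y_0^{\mathrm D}:=\{z\in W \mid \exists x,y\in W\,(x\leq z \ \&\ y\leq z\ \&\ x\cancel{R}_N y)\}$ for Do{\v{s}}en's set and $Y_0^{\star}:=\{z\in W\mid \forall x\in W\,(z\cancel{R}_N x)\}$ for the set used in Theorem \ref{trans2}.

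The inclusion $Y_0^{\star}\subseteq Y_0^{\mathrm D}$ is immediate: if $z\in Y_0^{\star}$ then in particular $z\cancel{R}_N z$, so taking $x:=z$ and $y:=z$, and using reflexivity of $\leq$, witnesses $z\in Y_0^{\mathrm D}$. For the converse $Y_0^{\mathrm D}\subseteq Y_0^{\star}$, I would argue by contradiction: assume $z\in Y_0^{\mathrm D}$, witnessed by some $x\leq z$, $y\leq z$ with $x\cancel{R}_N y$, but $z\notin Y_0^{\star}$, so that $z R_N w$ for some $w\in W$. The crux is to ``pull this back'' to $zR_N z$. By condition (4) of Definition \ref{J-frame} applied to $zR_N w$, there is $v$ with $z\leq v$, $w\leq v$ and $zR_N v$; symmetry (condition (3)) gives $vR_N z$; and then from $z\leq v$, $vR_N z$, $z\leq z$ together with condition (2), namely $(\leq R_N\leq^{-1})\subseteq R_N$, we obtain $zR_N z$. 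A second application of condition (2), now to $x\leq z$, $zR_N z$, $y\leq z$, gives $xR_N y$, contradicting $x\cancel{R}_N y$. Hence $z\in Y_0^{\star}$, and therefore $Y_0^{\mathrm D}=Y_0^{\star}$.

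There is no real obstacle here: the only point requiring a moment's thought is the observation that in a strictly condensed $J$-frame, $z$ being $R_N$-related to \emph{anything} already forces $zR_N z$ (via conditions (4), (3) and (2)); everything else is a direct unfolding of the frame axioms. Note that only the $J$-frame conditions are used — reflexivity of $R_N$, which would make the claim trivial, is not needed.
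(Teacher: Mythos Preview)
Your proof is correct and fills in exactly the details the paper omits (the paper merely asserts the equivalence without argument). The key step you identify --- that in a strictly condensed $J$-frame, $zR_N w$ for some $w$ already forces $zR_N z$ via conditions (4), (3), (2) --- is precisely the point, and your two applications of $(\leq R_N \leq^{-1})\subseteq R_N$ are parsed correctly.
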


In Theorem \ref{trans1}, we have obtained a mapping $\Phi: \mathfrak{F}_2 \rightarrow \mathfrak{F}_1$ from sub-normal frames to $\hat{N}$-frames. In fact, the restriction $\Phi|_{\mathfrak{F}_2'}$ of $\Phi$ is also a map from the subclass $\mathfrak{F}_2'$ of sub-normal identity frames to the subclass $\mathfrak{F}_1'$ of $\hat{N}'$-frames. In the same way, in Theorem \ref{trans2}, $\Psi: \mathfrak{F}_1 \rightarrow \mathfrak{F}_2$ is a map from $\hat{N}$-frames to sub-normal frames. The restriction $\Psi|_{\mathfrak{F}_1'}$ of $\Psi$ is then a map from the subclass $\mathfrak{F}_1'$ of $\hat{N}'$-frames to the subclass $\mathfrak{F}_2'$ of sub-normal identity frames. We also get the following easily.

\begin{theorem}
\label{trans3}
Consider the maps $\Phi$ and $\Psi$ obtained in Theorems \ref{trans1} and \ref{trans2}.
\begin{enumerate}[label={$\arabic*$.}, leftmargin=1.5 \parindent, topsep=0pt, noitemsep]
\item 
\begin{enumerate}[label={$(\alph*)$}, leftmargin=1.5 \parindent, topsep=0pt, noitemsep]
\item $\Phi \Psi$ and $\Psi \Phi$ are identity maps on the classes $\mathfrak{F}_1$ and $\mathfrak{F}_2$ respectively.
\item $\Phi|_{\mathfrak{F}_2'} \Psi|_{\mathfrak{F}_1'}$ and $\Psi|_{\mathfrak{F}_1'} \Phi|_{\mathfrak{F}_2'}$ are identity maps on the classes $\mathfrak{F}_1'$ and $\mathfrak{F}_2'$ respectively.
\end{enumerate}
\item For any formula $\alpha\in F$, (a) $\mathfrak{F}_1 \vDash \alpha \Leftrightarrow \mathfrak{F}_2 \vDash \alpha $ and (b) $\mathfrak{F}_1' \vDash \alpha \Leftrightarrow \mathfrak{F}_2' \vDash \alpha $.
\end{enumerate}
\end{theorem}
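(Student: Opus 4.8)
The plan is to treat Theorem \ref{trans3} as a purely formal consequence of Theorems \ref{trans1} and \ref{trans2}: no new frame-theoretic content is needed, only a careful tracking of the two composites $\Phi\Psi$ and $\Psi\Phi$ and a transport-of-validity argument. The key observation is that item~1 of Theorem \ref{trans1} (the expression for $Y_0$ in terms of $R_{N_2}$) and item~1 of Theorem \ref{trans2} (the expressions for $R_{N_1}$ and $R_{N_2}$ in terms of $\leq$ and $Y_0$) are exactly the two halves required to close the round trip, while items~2 and~3 of both theorems supply everything needed for part~2.

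First I would prove part~1(a). Start with an arbitrary $\hat{N}$-frame $\mathcal{G}:=(W,\leq,R_{N_1},R_{N_2})\in\mathfrak{F}_1$. Applying $\Psi$ gives $\Psi(\mathcal{G})=(W,\leq,Y_0)$ with $Y_0:=\{z\in W\mid\forall x\in W(z\cancel{R}_{N_2}x)\}$, and then applying $\Phi$ yields $\Phi\Psi(\mathcal{G})=(W,\leq,R_{N_1}',R_{N_2}')$, where by clauses (A), (B) of Theorem \ref{trans1} we have $xR_{N_1}'y\Leftrightarrow\exists z(x\leq z~\&~y\leq z)$ and $xR_{N_2}'y\Leftrightarrow\exists z(x\leq z~\&~y\leq z~\&~z\notin Y_0)$. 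But these are precisely the characterizations of the original relations $R_{N_1}$, $R_{N_2}$ recorded in item~1 of Theorem \ref{trans2}, so $R_{N_1}'=R_{N_1}$ and $R_{N_2}'=R_{N_2}$, i.e. $\Phi\Psi(\mathcal{G})=\mathcal{G}$. For the other composite, take a sub-normal frame $\mathcal{F}:=(W,\leq,Y_0)\in\mathfrak{F}_2$; then $\Phi(\mathcal{F})$ carries the relations given by (A), (B), and $\Psi\Phi(\mathcal{F})=(W,\leq,Y_0')$ with $Y_0'=\{z\in W\mid\forall x\in W(z\cancel{R}_{N_2}x)\}$. Item~1 of Theorem \ref{trans1} states exactly that $Y_0$ equals this set, so $Y_0'=Y_0$ and $\Psi\Phi(\mathcal{F})=\mathcal{F}$.

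Part~1(b) is then immediate: by item~2 of Theorem \ref{trans1} the restriction $\Phi|_{\mathfrak{F}_2'}$ does map into $\mathfrak{F}_1'$, and by item~2 of Theorem \ref{trans2} the restriction $\Psi|_{\mathfrak{F}_1'}$ maps into $\mathfrak{F}_2'$; since $\Phi\Psi$ and $\Psi\Phi$ are already the identity on all of $\mathfrak{F}_1$ and $\mathfrak{F}_2$, the same holds for their restrictions to the subclasses. For part~2(a) I would argue both implications by transporting validity. Suppose $\mathfrak{F}_1\vDash\alpha$ and let $\mathcal{F}\in\mathfrak{F}_2$ be arbitrary; by item~2 of Theorem \ref{trans1}, $\Phi(\mathcal{F})\in\mathfrak{F}_1$, hence $\Phi(\mathcal{F})\vDash\alpha$, and item~3(c) of Theorem \ref{trans1} gives $\mathcal{F}\vDash\alpha$; as $\mathcal{F}$ was arbitrary, $\mathfrak{F}_2\vDash\alpha$. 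Conversely, if $\mathfrak{F}_2\vDash\alpha$ and $\mathcal{G}\in\mathfrak{F}_1$, then $\Psi(\mathcal{G})\in\mathfrak{F}_2$ by item~2 of Theorem \ref{trans2}, so $\Psi(\mathcal{G})\vDash\alpha$, and item~3(c) of Theorem \ref{trans2} yields $\mathcal{G}\vDash\alpha$; hence $\mathfrak{F}_1\vDash\alpha$. Part~2(b) is the very same argument, applied to the subclasses $\mathfrak{F}_1'$ and $\mathfrak{F}_2'$ and using the ``moreover'' parts of items~2 and~3 of the two theorems.

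There is no genuine obstacle here; the proof is bookkeeping. The only points to be careful about are (i) verifying that items~1 of Theorems \ref{trans1} and \ref{trans2} do state exactly the two identities ($Y_0$ from $R_{N_2}$, and $R_{N_1}$, $R_{N_2}$ from $\leq$ and $Y_0$) that make the round trips collapse to identities, and (ii) checking that the subclass restrictions $\Phi|_{\mathfrak{F}_2'}$ and $\Psi|_{\mathfrak{F}_1'}$ are well defined before invoking the unrestricted identity statements.
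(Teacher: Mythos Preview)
Your proposal is correct and matches the paper's approach: the paper gives no explicit proof of Theorem~\ref{trans3} at all, simply stating ``We also get the following easily'' after noting that the restrictions $\Phi|_{\mathfrak{F}_2'}$ and $\Psi|_{\mathfrak{F}_1'}$ are well defined. Your write-up is exactly the bookkeeping the paper leaves implicit, drawing on items~1, 2, and 3(c) of Theorems~\ref{trans1} and~\ref{trans2} in the natural way.
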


We have already proved that the class $\mathfrak{F}_1$ ($\mathfrak{F}_1'$) of all $\hat{N}$-frames ($\hat{N}'$-frames) determines ILM (ILM-$\vee$) in Theorem \ref{comp-ILM-dosen}. Using Theorem \ref{trans3}(2), we obtain the following corollary.

\begin{corollary}\label{char-subnILM}
The classes $\mathfrak{F}_2$ of sub-normal frames and $\mathfrak{F}_2'$ of sub-normal identity frames determine ${\rm ILM}$ and ${\rm ILM}$-${\vee}$ respectively.
\end{corollary}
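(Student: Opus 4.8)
The plan is to obtain the corollary purely by chaining equivalences that have already been proved, so no fresh construction is needed. First I would recall that the soundness remark at the start of Section \ref{subsec-char} together with the completeness Theorem \ref{comp-ILM-dosen} yields, for every $\alpha \in F$, the biconditionals
\[
\vdash_{\rm ILM} \alpha \iff \mathfrak{F}_1 \vDash \alpha
\qquad\text{and}\qquad
\vdash_{\rm ILM\text{-}{\vee}} \alpha \iff \mathfrak{F}_1' \vDash \alpha,
\]
i.e. the class $\mathfrak{F}_1$ of $\hat{N}$-frames determines ${\rm ILM}$ and the class $\mathfrak{F}_1'$ of $\hat{N}'$-frames determines ${\rm ILM}$-${\vee}$. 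This is the only place where the canonical-model machinery of Section \ref{subsec-char} is used.

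Next I would bring in the inter-translation. Theorem \ref{trans3}(2)(a) gives $\mathfrak{F}_1 \vDash \alpha \iff \mathfrak{F}_2 \vDash \alpha$, and Theorem \ref{trans3}(2)(b) gives $\mathfrak{F}_1' \vDash \alpha \iff \mathfrak{F}_2' \vDash \alpha$. Composing these with the biconditionals above we get, for every $\alpha \in F$,
\[
\vdash_{\rm ILM} \alpha \iff \mathfrak{F}_2 \vDash \alpha
\qquad\text{and}\qquad
\vdash_{\rm ILM\text{-}{\vee}} \alpha \iff \mathfrak{F}_2' \vDash \alpha,
\]
which is precisely the statement that $\mathfrak{F}_2$ (resp. $\mathfrak{F}_2'$) determines ${\rm ILM}$ (resp. ${\rm ILM}$-${\vee}$). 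The argument for the ${\vee}$-case is word-for-word the one for the base case, with $\mathfrak{F}_1,\mathfrak{F}_2$ replaced by $\mathfrak{F}_1',\mathfrak{F}_2'$ and part (b) of Theorem \ref{trans3}(2) used in place of part (a).

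Since all the substantive content — soundness, the canonical $\hat{N}$-model completeness proof, and the maps $\Phi,\Psi$ with their truth-preservation property — is already available from Theorems \ref{comp-ILM-dosen}, \ref{trans1}, \ref{trans2} and \ref{trans3}, there is no genuine obstacle; the corollary is a formal consequence. The only points that warrant a moment of care are to read ``determines'' as the conjunction of soundness and completeness, so that \emph{both} directions of each biconditional are in hand before composing, and to note that the class-level validity equivalences of Theorem \ref{trans3}(2) are exactly what is required here: validity in a class means validity in every member, and by Theorem \ref{trans3}(1) the maps $\Phi,\Psi$ (and their restrictions) are mutually inverse, so every member of $\mathfrak{F}_2$ (resp. $\mathfrak{F}_2'$) is reached from some member of $\mathfrak{F}_1$ (resp. $\mathfrak{F}_1'$) and conversely.
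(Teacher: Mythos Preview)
Your proposal is correct and follows exactly the paper's own approach: the paper derives the corollary immediately from Theorem \ref{comp-ILM-dosen} (determination of ${\rm ILM}$ and ${\rm ILM}$-${\vee}$ by $\mathfrak{F}_1$ and $\mathfrak{F}_1'$) together with Theorem \ref{trans3}(2) (the validity equivalences $\mathfrak{F}_1 \vDash \alpha \Leftrightarrow \mathfrak{F}_2 \vDash \alpha$ and $\mathfrak{F}_1' \vDash \alpha \Leftrightarrow \mathfrak{F}_2' \vDash \alpha$). Your write-up is simply a more explicit unpacking of this one-line derivation.
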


As pointed out in Theorem \ref{char-JP'}, the class of sub-normal frames determines the logic ${\rm JP'}$. From Corollary \ref{char-subnILM}, we have obtained that the same class of frames determines ${\rm ILM}$. 
The logic ${\rm JP'}$ has the finite model property (FMP) with respect to the class of sub-normal frames, and being finitely axiomatizable, is decidable as well \cite{Goldblatt1974}.
As a result, one can obtain FMP and decidability for ${\rm ILM}$.

We have shown earlier in Theorem \ref{prop-JP'ILM}, ${\rm JP'}$ and ${\rm ILM}$ cannot be equivalent. The two logics have different alphabets, the latter having the extra propositional constant $\bot$, not definable in ${\rm JP'}$ using other connectives. 
This indicates limitations of these relational frames  -- non-equivalent logics of the above kind cannot be  differentiated through them.

\section{The logics \texorpdfstring{$K_{im}$}~ and \texorpdfstring{$K_{im-\vee}$}~ without implication}
\label{sec-bdll}

As mentioned in Section \ref{intro}, we now turn to an investigation of the two negation operators introduced through  $ccpBa$s, adopting the approaches of Dunn and Vakarelov.  Consider an alphabet that has propositional variables $p,q,r, \ldots$, binary connectives $\vee$ and $\wedge$, unary connectives $\neg$ and ${\sim}$, and constants $\bot$ and $\top$. Define a language $\tilde{\mathcal{L}}$ with this alphabet and class $\tilde{F}$ of well-formed formulas  given by the scheme:
\[  p \mid \top \mid \bot \mid \alpha \wedge \beta \mid \alpha \vee \beta \mid \neg \alpha \mid {\sim} \alpha \]

\noindent The logical consequence is defined as a pair of formulas $(\phi,\psi)$, written as $\phi \vdash \psi$ and called a {\it sequent}. The rule expressing `if $\alpha \vdash \beta$ then $\gamma \vdash \delta$' is written as $\alpha \vdash \beta$ / $\gamma \vdash \delta$.

\begin{definition}[The logics $\bm{K_{im}}$ and $\bm{K_{im-\vee}}$ \cite{anuj2019}]
\label{def-Kim}
The language of $K_{im}$ is $\tilde{\mathcal{L}}$. The axioms and rules of $K_{im}$ are as follows:
\begin{multicols}{2}
\begin{enumerate}[label={A$\arabic*$.}, leftmargin=2 \parindent, noitemsep]
\item $\alpha \vdash \alpha$
\item $\alpha \vdash \beta$, $\beta \vdash \gamma$ / $\alpha \vdash \gamma$
\item $\alpha \wedge \beta \vdash \alpha$; $\alpha \wedge \beta \vdash \beta$
\item $\alpha \vdash \beta$, $\alpha \vdash \gamma$ / $\alpha \vdash \beta \wedge \gamma$
\item $\alpha \vdash \gamma$, $\beta \vdash \gamma$ / $\alpha \vee \beta \vdash \gamma$
\item $\alpha \vdash \alpha \vee \beta$; $\beta \vdash \alpha \vee \beta$
\item $\alpha \wedge (\beta \vee \gamma) \vdash (\alpha \wedge \beta) \vee (\alpha \wedge \beta)$
\item $\alpha \vdash \top$ (Top)
\item $\bot \vdash \alpha$ (Bottom)
\item $\alpha \vdash \beta$ / $\neg \beta \vdash \neg \alpha$
\item $\neg \alpha \wedge \neg \beta \vdash \neg (\alpha \vee \beta)$
\item $\top \vdash \neg \bot$
\item $\alpha \vdash \neg \neg \alpha$
\item $\alpha \wedge \beta \vdash \gamma$ / $\alpha \wedge \neg \gamma \vdash \neg \beta$
\item $\alpha \wedge \neg \alpha \vdash \beta$
\item ${\sim} \alpha \vdash \neg (\alpha \wedge \neg {\sim} \top)$
\item $\neg (\alpha \wedge \neg {\sim} \top) \vdash {\sim} \alpha$
\end{enumerate}
\end{multicols}
\noindent The logic $K_{im-\vee}$ is $K_{im}$ enhanced with the following axiom.
\begin{enumerate}[label={A$\arabic*$.}, leftmargin=2.3 \parindent, noitemsep]
\setcounter{enumi}{17}
\item $\top \vdash \alpha \vee {\sim} \alpha$
\end{enumerate}
\end{definition}

\noindent Derivability is defined in the standard manner. Following the nomenclature in \cite{Dunn2005}, axioms A1-A7 give the 
Distributive Lattice Logic, while A1-A9 give the Bounded Distributive Lattice Logic (BDLL). A10-A12 are the axioms and rules that make the negation $\neg$ {\it preminimal}, and further, A13 and A14 make it {\it minimal}; adding A15 makes it {\it intuitionistic}. Note that minimal or intuitionistic negation defined here is different from that mentioned in Remark \ref{negations}.
BDLL, along with preminimal negation $\neg$, is denoted by $K_i$.

\begin{proposition}
\label{prop-Kim}
The following can be proved in the system $K_{im}$:
\begin{enumerate}[label={{\rm P}$\arabic*$.}, leftmargin=3 \parindent, noitemsep]
\item $\alpha \vdash \beta$, $\delta \vdash \gamma$ / $\alpha \wedge \delta \vdash \beta \wedge \gamma$
\item ${\sim}$-Contraposition: $\alpha \vdash \beta$ / ${\sim} \beta \vdash {\sim} \alpha$
\item ${\sim}$-$\vee$-Linearity: ${\sim} \alpha \wedge {\sim} \beta \vdash {\sim} (\alpha \vee \beta)$
\item ${\sim}$-Nor: $\top \vdash {\sim} \bot$
\item $\alpha \vdash {\sim} {\sim} \alpha$
\item $\alpha \wedge \beta \vdash \gamma$ / $\alpha \wedge {\sim} \gamma \vdash {\sim} \beta$
\item {\rm (DNE(${\sim}\top$))} ${\neg} \neg {\sim} \top \vdash {\sim} \top$
\end{enumerate}
\end{proposition}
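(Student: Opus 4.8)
The plan is to derive each of P1--P7 using only the axioms and rules A1--A17 of $K_{im}$, together with the previously established structure of BDLL. Each item follows the same pattern as its $\neg$-counterpart among A10--A15, exploiting the fact that axioms A16 and A17 together give a two-sided sequent ${\sim}\alpha \dashv\vdash \neg(\alpha\wedge\neg{\sim}\top)$, so that every property of ${\sim}$ can be reduced to a property of $\neg$ applied to the formula $\alpha\wedge\neg{\sim}\top$.

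First I would prove P1 purely in BDLL: from $\alpha\vdash\beta$ and A3 ($\alpha\wedge\delta\vdash\alpha$) get $\alpha\wedge\delta\vdash\beta$ by A2, similarly $\alpha\wedge\delta\vdash\gamma$, and then A4 yields $\alpha\wedge\delta\vdash\beta\wedge\gamma$; this is the standard congruence lemma for $\wedge$ and it will be reused throughout. For P2 (${\sim}$-Contraposition), assume $\alpha\vdash\beta$. By P1 (with $\delta=\gamma=\neg{\sim}\top$) we get $\alpha\wedge\neg{\sim}\top\vdash\beta\wedge\neg{\sim}\top$; applying A10 gives $\neg(\beta\wedge\neg{\sim}\top)\vdash\neg(\alpha\wedge\neg{\sim}\top)$; now sandwich with A16 and A17 and chain via A2 to obtain ${\sim}\beta\vdash{\sim}\alpha$. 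P3 (${\sim}$-$\vee$-Linearity) is analogous: rewrite both ${\sim}\alpha$ and ${\sim}\beta$ via A16, use A11 for $\neg$ on $(\alpha\wedge\neg{\sim}\top)$ and $(\beta\wedge\neg{\sim}\top)$ together with the distributive law A7 to identify $(\alpha\wedge\neg{\sim}\top)\vee(\beta\wedge\neg{\sim}\top)$ with $(\alpha\vee\beta)\wedge\neg{\sim}\top$, then convert back with A17. P4 (${\sim}$-Nor, $\top\vdash{\sim}\bot$) uses A9 to see $\bot\vdash\bot\wedge\neg{\sim}\top$ and $\bot\wedge\neg{\sim}\top\vdash\bot$, hence $\neg\bot\dashv\vdash\neg(\bot\wedge\neg{\sim}\top)$; combine with A12 ($\top\vdash\neg\bot$) and A17 to conclude $\top\vdash{\sim}\bot$. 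P5 ($\alpha\vdash{\sim}{\sim}\alpha$) follows from P2: since P2 is a contraposition rule and $\alpha$ is arbitrary, one derives $\alpha\vdash{\sim}{\sim}\alpha$ exactly as A13 is obtained from A10 in BDLL-style systems (or, concretely, apply P2 to the identity A1 twice after unfolding via A16/A17, tracking the double negation of $\alpha\wedge\neg{\sim}\top$ back through A13 for $\neg$). P6 is the ${\sim}$-version of A14: from $\alpha\wedge\beta\vdash\gamma$, A1 and P1 give $(\alpha\wedge\neg{\sim}\top)\wedge\beta\vdash\gamma$, then A14 yields $(\alpha\wedge\neg{\sim}\top)\wedge\neg\gamma\vdash\neg\beta$; reassociate and use A16 to pass from $\neg\gamma$ to a form involving ${\sim}\gamma$ carefully—here one must note ${\sim}\gamma\vdash\neg(\gamma\wedge\neg{\sim}\top)\vdash\neg\gamma$ is not automatic, so the cleaner route is to stay with $\neg(\gamma\wedge\neg{\sim}\top)$ throughout and convert only at the very end.

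Finally P7, $\neg\neg{\sim}\top\vdash{\sim}\top$ (the sequent form of Observation~\ref{obs-ccpba}(2)): this is the one genuinely content-bearing item. Instantiate A16 at $\alpha={\sim}\top$ to get ${\sim}{\sim}\top\vdash\neg({\sim}\top\wedge\neg{\sim}\top)$; by A15 and A3 one has ${\sim}\top\wedge\neg{\sim}\top\vdash\beta$ for all $\beta$, in particular $\vdash\bot$, so $\neg({\sim}\top\wedge\neg{\sim}\top)$ is equivalent (using A8/A9 and A10) to $\neg\bot$, hence $\top\vdash\neg({\sim}\top\wedge\neg{\sim}\top)$; but also A17 at $\alpha=\top$ gives $\neg(\top\wedge\neg{\sim}\top)\vdash{\sim}\top$, and $\top\wedge\neg{\sim}\top\dashv\vdash\neg{\sim}\top$ by A3 and A4/A8, so $\neg\neg{\sim}\top\vdash{\sim}\top$. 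I expect the main obstacle to be P7, specifically the bookkeeping that identifies $\neg(\top\wedge\neg{\sim}\top)$ with $\neg\neg{\sim}\top$ and keeps the applications of A16 versus A17 in the correct direction; the other items are routine once the congruence lemma P1 and the two-sided equivalence A16/A17 are in hand.
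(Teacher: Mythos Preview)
Your overall strategy---reducing every property of ${\sim}$ to the corresponding property of $\neg$ via the two-sided equivalence A16/A17---is correct and is exactly the approach underlying the paper. The paper in fact proves only P7 explicitly, and your final argument for P7 coincides with it: from $\top\wedge\neg{\sim}\top\vdash\neg{\sim}\top$ (A3) apply A10 to get $\neg\neg{\sim}\top\vdash\neg(\top\wedge\neg{\sim}\top)$, then A17 (with $\alpha=\top$) and A2. Note, however, that the first half of your P7 paragraph (instantiating A16 at $\alpha={\sim}\top$ and discussing $\neg\bot$) is a detour that contributes nothing to the conclusion; you can drop it entirely.

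For the remaining items your sketches are correct in spirit but a few are under-specified. P6 in particular works cleanly if you carry $\neg{\sim}\top$ through both slots: from $\alpha\wedge\beta\vdash\gamma$ and P1 get $(\alpha\wedge\neg{\sim}\top)\wedge(\beta\wedge\neg{\sim}\top)\vdash\gamma\wedge\neg{\sim}\top$, apply A14 to obtain $\alpha\wedge\neg(\gamma\wedge\neg{\sim}\top)\vdash\neg(\beta\wedge\neg{\sim}\top)$ (after reassociating), then sandwich with A16 on the left and A17 on the right. P5 is similarly correct but needs more than ``apply P2 twice'': one convenient route is to show $\alpha\wedge({\sim}\alpha\wedge\neg{\sim}\top)\vdash\bot$ via A15 (using A16 to unfold ${\sim}\alpha$), then A14, A12 and A17 finish. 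These are routine fixes; the proposal stands.
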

\noindent `DNE' in P7 stands for `double negation elimination'.
\begin{proof} We only prove P$7$.
$\top \wedge \neg {\sim} \top \vdash \neg {\sim} \top$ (using A$3$). A$10$ implies $\neg \neg {\sim} \top \vdash \neg (\top \wedge \neg {\sim} \top)$. Finally using A$17$ and A$2$, we get $\neg \neg {\sim} \top \vdash {\sim} \top$.
\end{proof}

\noindent Here, P$2$, P$3$ and P$4$ make the negation ${\sim}$ a preminimal negation. Further, P$5$ and P$6$ 
make ${\sim}$ minimal. In fact, we have the following `equivalent' version $K_{im}'$ of $K_{im}$.

\begin{definition}[The logic $\bm{K_{im}'}$] \label{def-Kim2}  The language of $K_{im}'$ is $\tilde{\mathcal{L}}$. The axioms and rules are A$1$-A$15$, P$2$-P$6$ and P$7$ {\rm (DNE(${\sim}\top$))}.
\end{definition}

\begin{theorem}
\label{prop-Kim2}
For any $\alpha,\beta \in \tilde{F}$, $\alpha \vdash_{K_{im}} \beta$ if and only if $\alpha \vdash_{K_{im}'} \beta$. 
\end{theorem}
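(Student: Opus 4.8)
The plan is to prove the two directions separately, using that $K_{im}$ and $K_{im}'$ share the axioms and rules A1--A15, and differ only in that $K_{im}$ postulates A16 and A17 while $K_{im}'$ postulates P2--P6 together with P7. Since derivability in a Hilbert-style sequent calculus is monotone in the set of axioms and rules (by induction on the length of a derivation), it suffices to show: (i) P2--P7 are derivable in $K_{im}$, which is immediate because these are exactly items P2--P7 of Proposition \ref{prop-Kim}, asserted (and, for P7, proved) there to hold in $K_{im}$; and (ii) A16, i.e. ${\sim}\alpha \vdash \neg(\alpha \wedge \neg{\sim}\top)$, and A17, i.e. $\neg(\alpha \wedge \neg{\sim}\top)\vdash{\sim}\alpha$, are derivable in $K_{im}'$. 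All the content lies in (ii).

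For A16 in $K_{im}'$: first apply P6 to the instance $\alpha \wedge \top \vdash \alpha$ (A3) to obtain $\alpha \wedge {\sim}\alpha \vdash {\sim}\top$; conjoining with $\neg{\sim}\top$ and invoking A15 (with $\beta := \bot$) yields $(\alpha \wedge \neg{\sim}\top)\wedge{\sim}\alpha \vdash \bot$. Now apply rule A14 with $\gamma := \bot$, and use that $\neg\bot$ and $\top$ are interderivable (from A8 and A12), to get $\alpha \wedge \neg{\sim}\top \vdash \neg{\sim}\alpha$. Contraposing via A10 gives $\neg\neg{\sim}\alpha \vdash \neg(\alpha \wedge \neg{\sim}\top)$, and composing with A13 ($\,{\sim}\alpha \vdash \neg\neg{\sim}\alpha$) and A2 yields A16.

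For A17 in $K_{im}'$: apply A14 to the instance $\alpha \wedge \neg{\sim}\top \vdash \alpha \wedge \neg{\sim}\top$ (A1), taking $\gamma := \alpha \wedge \neg{\sim}\top$, to get $\alpha \wedge \neg(\alpha \wedge \neg{\sim}\top) \vdash \neg\neg{\sim}\top$; chaining with P7 (DNE$({\sim}\top)$) produces $\alpha \wedge \neg(\alpha \wedge \neg{\sim}\top) \vdash {\sim}\top$. This is precisely the premise required by P6 with $\alpha' := \neg(\alpha \wedge \neg{\sim}\top)$, $\beta' := \alpha$, $\gamma' := {\sim}\top$, whose conclusion is $\neg(\alpha \wedge \neg{\sim}\top) \wedge {\sim}{\sim}\top \vdash {\sim}\alpha$; since P5 gives $\top \vdash {\sim}{\sim}\top$, the conjunct ${\sim}{\sim}\top$ may be absorbed (via A8, A4, A2), giving A17. (P4 offers an alternative route through $\gamma' := \bot$, but in either case P7 is the indispensable bridge.)

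Putting (i) and (ii) together with the literal sharing of A1--A15 shows that every $K_{im}$-derivation can be replayed in $K_{im}'$ and conversely, hence $\alpha \vdash_{K_{im}} \beta$ iff $\alpha \vdash_{K_{im}'} \beta$. I expect the one genuinely delicate point to be the correct deployment of P7 as the sole syntactic link between $\neg$ and ${\sim}$ available in $K_{im}'$ (in $K_{im}$ this link was supplied directly by A16--A17); everything else is routine bounded-distributive-lattice and (pre)minimal-negation bookkeeping, and care is needed only to stay within the allowed rule set when manipulating conjunctions and applying the contraposition rules A10, A14, P6.
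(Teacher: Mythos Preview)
Your proposal is correct and follows essentially the same approach as the paper: establish that P2--P7 hold in $K_{im}$ (by Proposition \ref{prop-Kim}) and that A16, A17 are derivable in $K_{im}'$, with the A17 derivation matching the paper's exactly (A14 on an instance of A1, then P7, then P6, then absorb ${\sim}{\sim}\top$ via P5). For A16 you take a small unnecessary detour through $\bot$ and A15 before applying A14; the paper applies A14 directly to $\alpha \wedge {\sim}\alpha \vdash {\sim}\top$ (with $\gamma := {\sim}\top$) to reach $\alpha \wedge \neg{\sim}\top \vdash \neg{\sim}\alpha$ in one step, but your longer route is also valid.
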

\begin{proof}
We shall only see the proofs of A$16$ and A$17$ in the logic $K_{im}'$.\\
A16: Using A3, $\alpha \wedge \top \vdash \alpha$. Then P6 implies $
\alpha \wedge {\sim} \alpha \vdash  {\sim} \top$. A14 then implies $\alpha \wedge \neg {\sim}\top \vdash \neg {\sim}\alpha$. Finally, A10 gives 
$\neg \neg {\sim} \alpha \vdash  \neg (\alpha \wedge \neg {\sim} \top)$\\
A17: Using A1 and A14, $\alpha \wedge \neg(\alpha \wedge \neg {\sim}\top) \vdash \neg \neg {\sim} \top$. Using P7 and A2, $\alpha \wedge \neg(\alpha \wedge \neg {\sim}\top) \vdash {\sim} \top$. P6 then implies ${\sim}{\sim} \top \wedge \neg(\alpha \wedge \neg {\sim}\top) \vdash {\sim} \alpha$.
We have $\top \vdash {\sim}{\sim} \top$ using P$5$. Now, using P$1$ and A$2$, $\top \wedge \neg(\alpha \wedge \neg {\sim}\top) \vdash {\sim} \alpha$. A$8$ and P$1$ imply $\neg(\alpha \wedge \neg {\sim}\top) \vdash  {\sim} \alpha$.
\end{proof}

Henceforth, we shall consider $K_{im}'$ instead of $K_{im}$.  

\subsection{Algebraic semantics for \texorpdfstring{$K_{im}$}~ and \texorpdfstring{$K_{im-\vee}$}~}


Let us define the  `$K_{im}$-algebras'. For that, let us note the terminologies related to algebras defined in \cite{dunn1993}. A {\it distributive lattice with preminimal negation}  is of the form $(A,1,0,\vee,\wedge,{\sim})$, where the reduct $(A,1,0,\vee,\wedge)$ is a bounded distributive lattice, and $\sim$ satisfies the following properties, for all $a,b \in A$:\\
(1) $a \leq b \Rightarrow {\sim} b \leq {\sim} a$,\\
(2) ${\sim} a \wedge {\sim} b \leq {\sim} (a \vee b)$, and\\
(3) $1 = {\sim} 0$.\\
If the preminimal negation satisfies $a \leq {\sim} {\sim} a$ and $a \wedge b \leq c \Rightarrow a \wedge {\sim} c \leq {\sim} b$, for all $a,b,c \in A$, then it is called a {\it  minimal negation}. If it further satisfies 
$a \wedge {\sim} a \leq b$ for all $a,b \in A$,  it is called an {\it  intuitionistic negation}. Moreover, intuitionistic negation satisfying ${\sim}{\sim} a \leq a$ is called an {\it ortho negation}.

\begin{definition}[$\bm{K_{im}}$-algebras]
\label{Kim}\index{K$_{im}$-algebra}
$ $\\
{\rm
\noindent A {\it $K_{im}$-algebra} $\mathcal{A}$ is a tuple of the form $(A,1,0,\vee,\wedge,\neg,{\sim})$ satisfying the following conditions:\\
(1) the reduct $(A,1,0,\vee,\wedge)$ is a bounded distributive lattice,\\
(2) the negation $\neg$ is an intuitionistic negation,\\
(3) the negation ${\sim}$ is a minimal negation, and\\
(4) $\neg \neg {\sim} 1 = {\sim} 1$.\\
A {\it $K_{im-{\vee}}$-algebra} is a $K_{im}$-algebra $\mathcal{A}$ satisfying
\[ a \vee {\sim} a =1 \mbox{~for all $a \in A$} \tag{EM}. \]
}
\end{definition}
\noindent EM stands for `excluded middle'.\\
\noindent Note that in (4) of Definition \ref{Kim}, ${\sim} 1 \leq \neg \neg {\sim} 1$ holds anyway, as $\neg$ is minimal (being intuitionistic). We denote the other direction by DNE($\sim 1$), i.e.
\[ \neg \neg {\sim} 1 \leq {\sim} 1 \tag{DNE($\sim 1$)}. \] 

Let us now give the algebraic semantics for $K_{im}$ ($K_{im-{\vee}}$). Consider any $K_{im}$-algebra ($K_{im-{\vee}}$-algebra) $\mathcal{A}:=(A,1,0, \vee, \wedge,\neg,{\sim})$. 
A {\it valuation} is a map $v$ from  PV to $A$, and can be extended to all formulas in the language $\tilde{\mathcal{L}}$ in the standard way \cite{HR}. For formulas $\alpha,\beta$ in $\tilde{\mathcal{L}}$, if for all valuations $v$ on $\mathcal{A}$, $v(\alpha)=1$ implies  $v(\beta)=1$, then we say $\alpha \vdash \beta$ is valid in $\mathcal{A}$. If this is true for all $K_{im}$-algebras ($K_{im-{\vee}}$-algebras), then we denote it by $\alpha \vDash_{K_{im}} \beta$ ($\alpha \vDash_{K_{im-{\vee}}} \beta$). It is straightforward to see

\begin{theorem}
\label{kim-sc}
For any $\alpha,\beta$ in $\tilde{\mathcal{L}}$, $\alpha \vdash_{K_{im}} \beta$ ($\alpha \vdash_{K_{im-{\vee}}} \beta$) if and only if $\alpha \vDash_{K_{im}} \beta$ ($\alpha \vDash_{K_{im-{\vee}}} \beta$), i.e. $K_{im}$ ($K_{im-{\vee}}$) is sound and complete with respect to the class of $K_{im}$-algebras ($K_{im-{\vee}}$-algebras).
\end{theorem}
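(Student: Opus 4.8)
The plan is to prove soundness by a routine induction on derivations and completeness by a Lindenbaum--Tarski construction, handling $K_{im}$ and $K_{im-{\vee}}$ by the same argument with one extra clause. By Theorem \ref{prop-Kim2} it is enough to work with the equivalent system $K_{im}'$, whose axiom list A1--A15, P2--P6, P7 matches the defining clauses of a $K_{im}$-algebra almost verbatim, so the derived principles of Proposition \ref{prop-Kim} are available throughout.

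\emph{Soundness.} For an arbitrary $K_{im}$-algebra $\mathcal{A}$ and valuation $v$ one checks that every axiom sequent of Definition \ref{def-Kim} is valid in $\mathcal{A}$ and that the rules (A2, A4, A5, A10, A14 and their $K_{im}'$-counterparts) preserve validity: A1--A9 hold since the reduct is a bounded distributive lattice; A10--A15 hold since $\neg$ is an intuitionistic negation — in particular A14 is precisely the quasi-identity $a\wedge b\le c\Rightarrow a\wedge\neg c\le\neg b$, valid for minimal, hence for intuitionistic, negations; A16 and A17 together assert ${\sim}a=\neg(a\wedge\neg{\sim}1)$, which follows from ``${\sim}$ minimal'', ``$\neg$ intuitionistic'' and $\neg\neg{\sim}1={\sim}1$ (the algebraic content of Proposition \ref{prop1-ccpBa}(4)--(5)). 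For $K_{im-{\vee}}$, A18 is valid exactly under (EM). Soundness then follows by induction on the length of a derivation.

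\emph{Completeness via the Lindenbaum algebra.} Put $\alpha\equiv\beta$ iff $\alpha\vdash_{K_{im}}\beta$ and $\beta\vdash_{K_{im}}\alpha$; by A1, A2 this is an equivalence relation, and using A3--A7 (for $\wedge,\vee$) together with the contraposition rules A10 (for $\neg$) and P2 (for ${\sim}$) one shows $\equiv$ is a congruence for $\wedge,\vee,\neg,{\sim}$. Let $\mathcal{A}_0:=\tilde{F}/{\equiv}$ carry the induced operations, with $0:=[\bot]$, $1:=[\top]$; then $[\alpha]\le[\beta]$ iff $\alpha\vdash_{K_{im}}\beta$. Next one verifies that $\mathcal{A}_0$ is a $K_{im}$-algebra: A1--A9 give a bounded distributive lattice; A10--A12 make $\neg$ preminimal, A13--A14 make it minimal, A15 makes it intuitionistic; P2--P4 make ${\sim}$ preminimal and P5--P6 make it minimal; and P7, i.e. ${\rm DNE}({\sim}\top)$, together with minimality of $\neg$, yields $\neg\neg{\sim}1={\sim}1$. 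For $K_{im-{\vee}}$, A18 gives (EM) in $\mathcal{A}_0$, since every element is of the form $[\gamma]$.

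\emph{Producing a refuting model.} The one point to be careful about is that validity here is ``$v(\alpha)=1\Rightarrow v(\beta)=1$'', whereas the canonical valuation $v^{c}(p):=[p]$ only gives $v^{c}(\gamma)=[\gamma]$, which need not be $1$. So, given $\alpha\nvdash_{K_{im}}\beta$, hence $[\alpha]\not\le[\beta]$ in $\mathcal{A}_0$, I would relativise: the principal lattice filter ${\uparrow}[\alpha]$ is closed under the negation conditions (for $\neg$ by the quasi-identity $a\wedge b\le c\Rightarrow a\wedge\neg c\le\neg b$, and likewise for ${\sim}$ via the P6-analogue), so it induces a congruence $\theta$ whose quotient $\mathcal{A}_1:=\mathcal{A}_0/\theta$ is again a $K_{im}$-algebra ($K_{im-{\vee}}$-algebra, since (EM) is preserved under quotients) and whose quotient map sends $[\alpha]$ to $1$. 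With the valuation $\bar{v}(p):=[[p]]$ on $\mathcal{A}_1$ one gets $\bar{v}(\alpha)=1$, while $\bar{v}(\beta)=1$ would force $[\alpha]\le[\beta]$, a contradiction; hence $\alpha\vdash\beta$ fails in $\mathcal{A}_1$, giving completeness. I expect the congruence checks for $\neg$ and ${\sim}$ and this relativisation step (both leaning on A14 and P6) to be the only parts that are not pure bookkeeping against the axiom list; the $K_{im-{\vee}}$ case differs only in carrying A18/(EM) through unchanged.
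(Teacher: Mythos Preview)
The paper offers no proof here (it says only ``It is straightforward to see''), so your proposal is filling a gap rather than paralleling an existing argument. Your overall strategy---soundness by induction, completeness via the Lindenbaum--Tarski algebra $\mathcal{A}_0$, then a relativisation step---is correct, and you are right to flag the last point: the paper defines validity as ``$v(\alpha)=1\Rightarrow v(\beta)=1$'' rather than ``$v(\alpha)\le v(\beta)$'', so the canonical valuation on $\mathcal{A}_0$ does not refute $\alpha\vdash\beta$ directly. Your quotient by the congruence $x\,\theta\,y \Leftrightarrow [\alpha]\wedge x=[\alpha]\wedge y$ works exactly as you say; the key observation, that $\theta$ respects $\neg$ and ${\sim}$, follows from A14 and P6 respectively (if $[\alpha]\wedge x\le y$ then $[\alpha]\wedge\neg y\le\neg x$, and similarly for ${\sim}$), and one checks the remaining $K_{im}$-algebra axioms and (EM) transfer to the quotient since they are inequalities or instances of the same A14/P6 schema.

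Two small corrections. First, for soundness you should run the induction for the stronger invariant $v(\alpha)\le v(\beta)$ and only at the end weaken to the paper's form: the rule A10 ($\alpha\vdash\beta\,/\,\neg\beta\vdash\neg\alpha$) does \emph{not} obviously preserve ``$v(\alpha)=1\Rightarrow v(\beta)=1$'' for a fixed algebra and valuation, whereas it trivially preserves $\le$ by contraposition. Second, your appeal to Proposition~\ref{prop1-ccpBa}(4)--(5) for A16/A17 is misplaced, since that proposition uses the relative pseudo-complement $\rightarrow$, which $K_{im}$-algebras lack; the implication-free derivation you need is exactly the one in the proof of Theorem~\ref{prop-Kim2}, which uses only A14, P6, P7, A13 and hence only the $K_{im}$-algebra axioms.
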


Since the definition of $K_{im}$ is motivated through the logic ${\rm ILM}$, one expects a relationship between $K_{im}$-algebras and $ccpBa$. The following is clear.

\begin{proposition}
\label{kim-ccpba}$ $\\
For any $ccpBa$ $(A,1,0, \vee, \wedge, \rightarrow,\neg,\sim )$, the reduct $(A,1,0, \vee, \wedge,\neg,\sim )$ is a $K_{im}$-algebra. 
\end{proposition}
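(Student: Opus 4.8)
The plan is to verify, one by one, the four defining conditions of a $K_{im}$-algebra for the reduct $(A,1,0,\vee,\wedge,\neg,{\sim})$ of an arbitrary {\it ccpBa} $\mathcal{A}:=(A,1,0,\vee,\wedge,\rightarrow,\neg,{\sim})$, drawing on the properties already established in Proposition \ref{prop1-ccpBa} and the standard facts about {\it rpc} lattices, {\it cc} lattices and {\it pBa}s. Condition (1), that $(A,1,0,\vee,\wedge)$ is a bounded distributive lattice, is immediate since the {\it rpc} reduct of a {\it ccpBa} is a bounded distributive lattice. Condition (4), $\neg\neg{\sim}1={\sim}1$, is precisely Observation \ref{obs-ccpba}(2).

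For condition (2), I would recall that in a {\it ccpBa} the reduct $(A,1,0,\vee,\wedge,\rightarrow,\neg)$ is a {\it pBa}, and then check that $\neg$ satisfies the definition of an intuitionistic negation from Definition \ref{Kim}'s preamble: the antitonicity $a\leq b\Rightarrow\neg b\leq\neg a$, the De Morgan-type inequality $\neg a\wedge\neg b\leq\neg(a\vee b)$, the identity $\neg 0=1$, the double-negation-introduction $a\leq\neg\neg a$, the contraposition-style rule $a\wedge b\leq c\Rightarrow a\wedge\neg c\leq\neg b$, and the ex falso $a\wedge\neg a\leq b$ — all of which are routine {\it pBa} facts (using $\neg a=a\rightarrow 0$ and the residuation property $a\wedge c\leq b\Leftrightarrow c\leq a\rightarrow b$). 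For condition (3), that ${\sim}$ is a minimal negation, the preminimal part — antitonicity ${\sim}$, ${\sim}a\wedge{\sim}b\leq{\sim}(a\vee b)$, and ${\sim}0=1$ — follows from the {\it cc}-lattice structure of $(A,1,\vee,\wedge,\rightarrow,{\sim})$ guaranteed by the remarks after Observation \ref{obs-ccpba}; the additional requirements $a\leq{\sim}{\sim}a$ and $a\wedge b\leq c\Rightarrow a\wedge{\sim}c\leq{\sim}b$ are Proposition \ref{prop1-ccpBa}(2) and an easy consequence of Proposition \ref{prop1-ccpBa}(1) together with residuation, respectively.

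Concretely I would organise the proof as: (i) note distributivity and boundedness; (ii) invoke that $(A,1,0,\vee,\wedge,\rightarrow,\neg)$ is a {\it pBa} and list which {\it pBa} identities give each clause of "intuitionistic negation"; (iii) invoke that $(A,1,\vee,\wedge,\rightarrow,{\sim})$ is a {\it cc} lattice and similarly dispatch the "preminimal" clauses, then cite Proposition \ref{prop1-ccpBa}(2) for $a\leq{\sim}{\sim}a$ and derive the minimal-negation rule from Proposition \ref{prop1-ccpBa}(1); (iv) cite Observation \ref{obs-ccpba}(2) for $\neg\neg{\sim}1={\sim}1$.

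I do not anticipate a genuine obstacle here — the statement is flagged "clear" in the paper and every ingredient is already available. The only mild care needed is bookkeeping: matching the axiom names in Definition \ref{Kim} (which come from Dunn's framework and are phrased purely order-theoretically) to the corresponding lattice/{\it pBa}/{\it cc}-lattice facts, and in particular checking the minimal-negation implication $a\wedge b\leq c\Rightarrow a\wedge{\sim}c\leq{\sim}b$: from $a\wedge b\leq c$ one gets $b\leq a\rightarrow c$, hence ${\sim}(a\rightarrow c)\leq{\sim}b$ by antitonicity, and then Proposition \ref{prop1-ccpBa}(1) (or a direct residuation argument using ${\sim}x=x\rightarrow{\sim}1$) yields $a\wedge{\sim}c\leq{\sim}(a\wedge c)\leq{\sim}b$ after noting $a\wedge c\geq$ the relevant meet — this is the one spot worth writing out carefully.
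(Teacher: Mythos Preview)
Your proposal is correct and matches the paper's approach, which simply declares the result ``clear'' without further argument; your plan of verifying the four clauses of Definition \ref{Kim} from the {\it pBa} and {\it cc}-lattice structure is exactly what is intended. One small point: in your final sketch the chain $a\wedge{\sim}c\leq{\sim}(a\wedge c)\leq{\sim}b$ does not quite go through as written (the second inequality would need $b\leq a\wedge c$, which is not given), but the direct residuation argument you mention parenthetically---from $a\wedge b\leq c$ deduce $(a\wedge{\sim}c)\wedge b=(a\wedge b)\wedge{\sim}c\leq c\wedge{\sim}c\leq{\sim}1$, hence $a\wedge{\sim}c\leq b\rightarrow{\sim}1={\sim}b$---is the clean way to finish.
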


Now the question is whether every $K_{im}$-algebra $(A,1,0, \vee, \wedge,\neg,{\sim})$ can be extended to a {\it ccpBa} $(A,1,0, \vee, \wedge,\rightarrow,\neg,\sim )$? Consider the lattice $L := \mathbb{Z} \times \mathbb{Z}$, 
the set of pairs of integers, with the usual ordering $\leq:$ $(m,n) \leq (r,s)$ if and only if $m \leq r$ and $n \leq s$. $L$ is a distributive lattice. Define $L' := L \cup \{\hat{0},\hat{1}\}$ ($\hat{0} \neq \hat{1}$), such that $\leq$ is extended to $L'$ in the following way: $\hat{0} \leq (m,n) \leq \hat{1}$ for all $(m,n) \in L$. Addition of $\hat{0}$ and $\hat{1}$ makes the lattice bounded, i.e. $L'$ is a bounded distributive lattice. Define two negations $\neg$ and ${\sim}$ on $L'$ as follows:\\
(1) $\neg (m,n) := \hat{0}$ for all $(m,n) \in L$, $\neg \hat{1} := \hat{0}$, and $\neg \hat{0} := \hat{1}$.\\
(2) ${\sim} a := \hat{1}$, for all $a \in L'$.\\
One can easily check that $\neg$ and ${\sim}$ are intuitionistic and minimal negations respectively.
Therefore, $\mathscr{L}:=(L',\hat{1},\hat{0},\vee,\wedge,\neg,{\sim})$ is a $K_{im}$-algebra. For $\mathscr{L}$ to be extended to a $ccpBa$, we must be able to define an operator `$\rightarrow$', such that $(L',\hat{1},\hat{0},\vee,\wedge,\rightarrow)$ is an {\it rpc} lattice, i.e the following holds: for all $a,b,x \in L'$, $a \wedge x \leq b  \Leftrightarrow x \leq a \rightarrow b$.\\
Let $a:=(1,0)$ and $b:=(0,1)$. The possible choices of $x$ for which $a \wedge x \leq b$ are from the set $\{\hat{0}\} \cup \{(m,n) \in \mathbb{Z} \times \mathbb{Z} ~|~ m \leq 0\}$. Since $x \leq a \rightarrow b$ for all such $x$, the only possible choice for $a \rightarrow b$ is $\hat{1}$. However, this value would make the converse false, because $(1,1) \leq \hat{1}$, but $(1,0) \wedge (1,1) = (1,0) \nleq (0,1)$. Thus, we have the following.

\begin{proposition}
\label{ccpba-kim}
$ $\\
There exists a $K_{im}$-algebra $\mathcal{A}:=(A,1,0, \vee, \wedge,\neg,\sim )$ such that there is no binary operator $\rightarrow$ on $A$ that makes the algebra $(A,1,0, \vee, \wedge, \rightarrow,\neg,\sim )$ a $ccpBa$.
\end{proposition}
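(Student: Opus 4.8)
The plan is to exhibit the explicit $K_{im}$-algebra $\mathscr{L}:=(L',\hat{1},\hat{0},\vee,\wedge,\neg,{\sim})$ constructed in the paragraph immediately preceding the statement, and to show that it admits no binary operator $\rightarrow$ turning it into a \textit{ccpBa}. Recall $L:=\Z\times\Z$ with the coordinatewise order, $L':=L\cup\{\hat0,\hat1\}$ with $\hat0\le(m,n)\le\hat1$ for all $(m,n)\in L$, the negation $\neg$ collapsing everything $\ne\hat0$ to $\hat0$ and sending $\hat0$ to $\hat1$, and ${\sim}$ the constant map with value $\hat1$.

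First I would verify that $\mathscr{L}$ really is a $K_{im}$-algebra in the sense of Definition \ref{Kim}. The reduct $(L',\hat1,\hat0,\vee,\wedge)$ is a bounded distributive lattice, since $\Z\times\Z$ is distributive and adjoining a new top and bottom preserves distributivity. For $\neg$: it is order-reversing, $\neg\neg a=\hat1$ for $a\ne\hat0$ and $\neg\neg\hat0=\hat0$, so $a\le\neg\neg a$ always; the De Morgan inequality $\neg a\wedge\neg b\le\neg(a\vee b)$, the equality $\neg\hat0=\hat1$, the contraposition-style rule, and $a\wedge\neg a=\hat0\le b$ are all immediate from the two-value behaviour of $\neg$, so $\neg$ is an intuitionistic negation. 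For ${\sim}\equiv\hat1$, every preminimal and minimal condition degenerates to a triviality (e.g. $a\le{\sim}{\sim}a$ reads $a\le\hat1$, and $a\wedge b\le c\Rightarrow a\wedge{\sim}c\le{\sim}b$ reads $a\wedge\hat1\le\hat1$), so ${\sim}$ is a minimal negation. Finally ${\sim}1={\sim}\hat1=\hat1$ and $\neg\neg\hat1=\hat1$, giving condition (4). These checks are routine and are already sketched in the text.

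Next I would argue that no $\rightarrow$ makes $\mathscr{L}$ a \textit{ccpBa}. If such a $\rightarrow$ existed, then by Definition \ref{ccpBa} the reduct $(L',\hat1,\hat0,\vee,\wedge,\rightarrow)$ would be a bounded \textit{rpc} lattice, so for all $a,b\in L'$ the down-set $S_{a,b}:=\{x\in L':a\wedge x\le b\}$ would have a greatest element, namely $a\rightarrow b$. Take $a:=(1,0)$ and $b:=(0,1)$. Since $a\wedge(m,n)=(\min(1,m),\min(0,n))\le(0,1)$ iff $\min(1,m)\le0$ iff $m\le0$, we get $S_{a,b}=\{\hat0\}\cup\{(m,n)\in\Z\times\Z:m\le0\}$. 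Any upper bound $u$ of $S_{a,b}$ in $L'$ must dominate $(0,n)$ for every $n\in\Z$, and no element of $L$ does; hence $\hat1$ is the only upper bound of $S_{a,b}$. But $a\wedge\hat1=(1,0)\nleq(0,1)=b$, so $\hat1\notin S_{a,b}$, and therefore $S_{a,b}$ has no greatest element. This contradicts the \textit{rpc} property, so $\mathscr{L}$ cannot be extended to a \textit{ccpBa}, proving the proposition.

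The argument has no genuine obstacle: the only mildly delicate point is confirming that the collapsing map $\neg$ and the constant map ${\sim}\equiv\hat1$ satisfy all defining identities of intuitionistic and minimal negation, after which the non-existence of $\rightarrow$ reduces to the single two-line computation above. One could replace $(1,0),(0,1)$ by any incomparable pair whose meet-down-set in $L'$ fails to be principal; the specific choice is made only to keep $S_{a,b}$ transparent.
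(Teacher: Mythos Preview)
Your proof is correct and follows essentially the same approach as the paper: you use the very algebra $\mathscr{L}$ constructed in the text, verify it is a $K_{im}$-algebra, and then show with the same pair $a=(1,0)$, $b=(0,1)$ that the down-set $\{x:a\wedge x\le b\}$ has no greatest element, so no relative pseudo-complement $a\rightarrow b$ can exist. Your write-up is slightly more explicit about why $\hat1$ is the only upper bound of $S_{a,b}$, but the argument is identical in substance.
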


\noindent  Propositions \ref{kim-ccpba} and \ref{ccpba-kim} demonstrate that even though there is a $K_{im}$-algebra that cannot be extended to a $ccpBa$, the  properties of the two negation operators  in a $K_{im}$-algebra are enough to capture the `non-implicative' version of $ccpBa$.


\begin{proposition}
\label{kim-kite}
$ $
\begin{enumerate}[label={{\rm \arabic*.}}, leftmargin=2 \parindent, noitemsep, topsep=0pt]
\item There exists a bounded distributive lattice $\mathcal{A}:=(A,1,0,\vee,\wedge,\neg,{\sim})$ with minimal negation ${\sim}$ and  intuitionistic negation ${\neg}$ such that $\neg \neg {\sim} 1 \neq {\sim} 1$.
\item There exists a $K_{im}$-algebra $\mathcal{A}:=(A,1,0,\vee,\wedge,\neg,{\sim})$  such that ${\sim}$ is not an intuitionistic negation.
\end{enumerate}
\end{proposition}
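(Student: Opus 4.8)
The plan is to settle both parts by exhibiting small explicit witnesses supported on three-element chains, so that every negation condition involved can be checked by direct inspection.

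For part~1, I would take the (unique up to isomorphism) three-element pBa on the chain $A:=\{0,a,1\}$, $0\leq a\leq 1$, with implication $\rightarrow_1$ as in Table~\ref{3rpc-imp} and pseudo-complement $\neg_1$ as in Table~\ref{3pba-neg}, and then equip the underlying bounded distributive lattice with a new negation by setting ${\sim}1:=a$ and ${\sim}x:=x\rightarrow_1 a$, so that ${\sim}0={\sim}a=1$. The operation $\neg_1$ is an intuitionistic negation, as for every pBa. For ${\sim}$: by the discussion following Definition~\ref{ccpBa}, $(A,1,\vee,\wedge,\rightarrow_1,{\sim})$ is a \textit{cc} lattice, and one checks that ${\sim}$ is a minimal negation --- antitonicity, ${\sim}x\wedge{\sim}y\leq{\sim}(x\vee y)$, ${\sim}0=1$ and $x\leq{\sim}{\sim}x$ are immediate on the chain, while the conditional contraposition $x\wedge y\leq z\Rightarrow x\wedge{\sim}z\leq{\sim}y$ follows, via the adjunction $u\wedge y\leq v\Leftrightarrow u\leq y\rightarrow_1 v$, from $x\wedge y\wedge(z\rightarrow_1 a)\leq z\wedge(z\rightarrow_1 a)\leq a$. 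Since $\neg_1\neg_1{\sim}1=\neg_1\neg_1 a=\neg_1 0=1\neq a={\sim}1$, condition~(4) of Definition~\ref{Kim} fails; thus $(A,1,0,\vee,\wedge,\neg_1,{\sim})$ is a bounded distributive lattice with an intuitionistic negation and a minimal negation for which $\neg\neg{\sim}1\neq{\sim}1$, which is assertion~1.

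For part~2, I would use the three-element c$\vee$cpBa $\mathcal{B}':=(A,1,0,\vee,\wedge,\rightarrow_1,\neg_1,{\sim}_1)$ of Section~\ref{sec-eg-ccpba}, obtained from the same pBa on $A$ by taking ${\sim}1:=1$, so that ${\sim}_1$ is the constant map with value $1$ (Table~\ref{3ccpba-neg}). As $\mathcal{B}'$ is in particular a \textit{ccpBa}, Proposition~\ref{kim-ccpba} gives that its reduct $(A,1,0,\vee,\wedge,\neg_1,{\sim}_1)$ is a $K_{im}$-algebra (so ${\sim}_1$ is a minimal negation and $\neg_1\neg_1{\sim}1={\sim}1$). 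But ${\sim}_1$ is not an intuitionistic negation, since that would require $x\wedge{\sim}_1 x\leq y$ for all $x,y\in A$, which fails for $x:=1$, $y:=0$: $1\wedge{\sim}_1 1=1\wedge 1=1\nleq 0$. This is assertion~2.

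The only step that needs genuine care is the verification in part~1 that the chosen ${\sim}$ satisfies the Dunn-style minimal-negation conditions, in particular the conditional contraposition; but as indicated this reduces to the elementary relative pseudo-complement identities already used in Section~\ref{sec-ccpba}, and everything else is a finite computation on a three-element lattice, so no real obstacle remains.
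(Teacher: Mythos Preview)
Your proof is correct, but your choice of witnesses differs from the paper's. For part~1 you use the three-element chain with ${\sim}1:=a$, whereas the paper uses the six-element {\it pBa} $\mathcal{H}_6$ of Figure~\ref{fig-6rpcA} with ${\sim}1:=w$; for part~2 you use the three-element {\it c${\vee}$cpBa} $\mathcal{B}'$ of Section~\ref{sec-eg-ccpba}, while the paper reuses the infinite lattice $\mathscr{L}=(\mathbb{Z}\times\mathbb{Z})\cup\{\hat 0,\hat 1\}$ that was built for Proposition~\ref{ccpba-kim}. Your witnesses are smaller and the verifications correspondingly shorter, which is a genuine advantage. On the other hand, the paper's choice for part~2 has a side benefit: since $\mathscr{L}$ is already known not to extend to a {\it ccpBa}, the same example simultaneously separates $K_{im}$-algebras from {\it ccpBa}-reducts and from algebras with intuitionistic~${\sim}$; your three-element witness, being the reduct of a {\it ccpBa}, cannot do the first job. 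Either argument settles the proposition as stated.
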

\begin{proof}$ $\\
$1.$ Consider the {\it pBa} $\mathcal{H}_6:=(H_6,1,0,\vee,\wedge,\rightarrow,\neg)$ with intuitionistic negation ${\neg}$ (Figure \ref{fig-6rpcA}). $(H_6,1,0,\vee,\wedge)$ is thus a bounded distributive lattice.  Define ${\sim}$ as follows: ${\sim} 1 :=w$ and for all $a \in H_6$, ${\sim} a := a \rightarrow {\sim} 1$. Then it can be shown that  ${\sim}$  is a minimal negation. But, ${\sim} 1 = w \neq \neg \neg {\sim} 1 ~(=1)$.
\vspace{1mm}

\noindent $2$. The $K_{im}$-algebra, $\mathscr{L}:=(L',\hat{1},\hat{0},\vee,\wedge,\neg,{\sim})$, considered to establish Proposition \ref{ccpba-kim}, suffices. For $a:=(m,n) \in L$, $a \wedge {\sim} a = a \wedge \hat{1} = a \neq \hat{0}$.
\end{proof}

A comprehensive analysis of different negations in any bounded distributive lattice is given in \cite{Dunn2005}, where a Kite-like figure is obtained with negations as nodes. Each node in the figure corresponds to a unique property of negation. A path connecting two nodes (that is two properties of negation) $A$ and $B$ where $A$ is above $B$, represents the fact that the negation at  $B$ holds at $A$. Moreover, two paths starting at nodes $A$ and $B$ and meeting at a higher node $C$, implies that the  property at $C$ can be derived from the  properties at $A$ and $B$.

We enhance this Kite diagram to one where each node corresponds to a {\it pair of negations} $(\sim,\neg)$ with $\neg$ as intuitionistic, and only the property of $\sim$ is mentioned against a node. The enhanced kite is called a {\it Kite with negation pair $({\sim},\neg)$}, where $\neg$ is intuitionistic (Figure \ref{fig-dunnkite2}). In such a diagram, due to Proposition \ref{kim-kite},   we can  place the negation ${\sim}$ of the $K_{im}$-algebra  strictly in between the nodes of minimal and intuitionistic negations.

Let us now see the placement of the negation ${\sim}$ of $K_{im-\vee}$-algebras in the Kite with negation pair $({\sim},\neg)$. The following examples of bounded distributive lattices of the type $(A,1,0,\vee,\wedge,\neg,{\sim})$ with two negations, where $\neg$ is intuitionistic, turn out to be useful for the purpose.
\vspace{1mm}

\noindent (1) Consider the $3$-element bounded distributive lattice $(A,1,0,\vee,\wedge)$ (Section \ref{sec-eg-ccpba}), where $A := \{0,a,1\}$ with ordering $0 \leq a \leq 1$. Define $\neg := \neg_1$ and ${\sim} := \neg_1$, where $\neg_1$ is as given in Table \ref{3pba-neg}. Then ${\sim}$ is intuitionistic, but $(A,1,0,\vee,\wedge,\neg_1,\neg_1)$ is not a $K_{im-\vee}$-algebra (as $a \vee \neg_1 a = a \neq 1$). Thus, ${\sim}$ of $K_{im-\vee}$-algebras cannot lie between `Intuitionistic' and DNE(${\sim}1$).
\vspace{1mm}

\noindent (2) For the same $3$-element bounded distributive lattice, define $\neg :=\neg_1$ and ${\sim}:={\sim}_1$, where $\neg_1$ and ${\sim}_1$ are as given in Tables \ref{3pba-neg} and \ref{3ccpba-neg} respectively. Then $(A,1,0,\vee,\wedge,\neg_1,{\sim}_1)$ is a $K_{im-\vee}$-algebra, but ${\sim}_1$ is not intuitionistic ($a \wedge {\sim}_1 a = a \neq 0$). Thus, ${\sim}$ of $K_{im-\vee}$-algebras cannot be in the path above the node `Intuitionistic'.
\vspace{1mm}

\noindent (3) For the same $3$-element bounded distributive lattice, define $\neg :=\neg_1$ and ${\sim}$ as identity map. Then ${\sim}$ is a De Morgan negation, but $(A,1,0,\vee,\wedge,\neg_1,{\sim})$ is not a $K_{im-\vee}$-algebra. This implies that ${\sim}$ of $K_{im-\vee}$-algebras cannot be in the path below the node `De Morgan'.
\vspace{1mm}

\noindent (4) Consider the $6$-element {\it pBa} $(H_6,1,0,\vee,\wedge,\rightarrow,\neg)$ (Figure \ref{fig-6rpcA}).
Recall the {\it c${\vee}$cpBa} $(H_6,1,0,\vee,\wedge,\rightarrow,\neg,{\sim})$ mentioned in Example (C)(3) of Section \ref{sec-eg-ccpba}.\\
The reduct $(H_6,1,0,\vee,\wedge,\neg,{\sim})$ is a $K_{im-\vee}$-algebra. However, ${\sim}$ is not De Morgan (${\sim}{\sim}w = 1 \nleq w$). So ${\sim}$ of $K_{im-\vee}$-algebras cannot be on the path connecting the nodes `De Morgan' and `Ortho'.
\vspace{1mm}

\begin{figure}[ht]
\begin{tikzpicture}[scale=1]
        \draw (0,0) -- (0,0);
        \draw  (3,0) -- (5,2);
		\draw [fill] (3,0) circle [radius = 0.075];
		\node [left] at (2.75,0) {$De$ $Morgan$};
		\node [below] at (1.65,-0.15) {${\sim}{\sim} a \leq a $};
        \draw  (5,2) -- (7,1);
		\draw [fill] (7,1) circle [radius = 0.075];
		\node [right] at (7.2,0.7) {$a \wedge {\sim} a = 0$};
		\node [above] at (8.6,0.8) {$Intuitionistic$};
        \draw  (7,1) -- (7,-1);
		\draw [fill] (7,-1) circle [radius = 0.075];
		\node [right] at (7.2,-1.3) {$a \wedge b \leq c \Rightarrow a \wedge {\sim} c \leq {\sim} b$};
		\node [above] at (8.2,-1.15) {$Minimal$};
        \draw  (7,-1) -- (5,-2);
		\draw [fill] (5,-2) circle [radius = 0.075];
		\node [right] at (5.25,-2.6) {$a \leq {\sim}{\sim} a$};
		\node [right] at (5.2,-2.1) {$Quasi$-$minimal$};
		\draw (3,0) -- (5,-2);
		\draw  (5,-2) -- (5,-4);
		\draw [fill] (5,-4) circle [radius = 0.075];
		\node [right] at (5.25,-4) {$Preminimal$};
		\draw [fill] (7,0) circle [radius = 0.075];
		\node [right] at (7.2,-0.02) {DNE(${\sim}1$)};
		\node [right] at (9.2,-0.02) {$\neg \neg {\sim} 1 \leq {\sim} 1$};
		\draw [fill] (5,2) circle [radius = 0.075];
        \node [above] at (5,2.25) {$Ortho$};
        
        \node [below] at (4.7,0.85) {EM};
        \node [below] at (4.7,0.45) {$a \vee {\sim} a = 1$};
        \draw [fill] (5,1) circle [radius = 0.075];
        \draw  (7,0) -- (5,1);
        \draw  (5,1) -- (5,2);
\end{tikzpicture}
\caption{Kite with negation pair $({\sim},\neg)$, where $\neg$ is intuitionistic}
\label{fig-dunnkite2}
\end{figure}
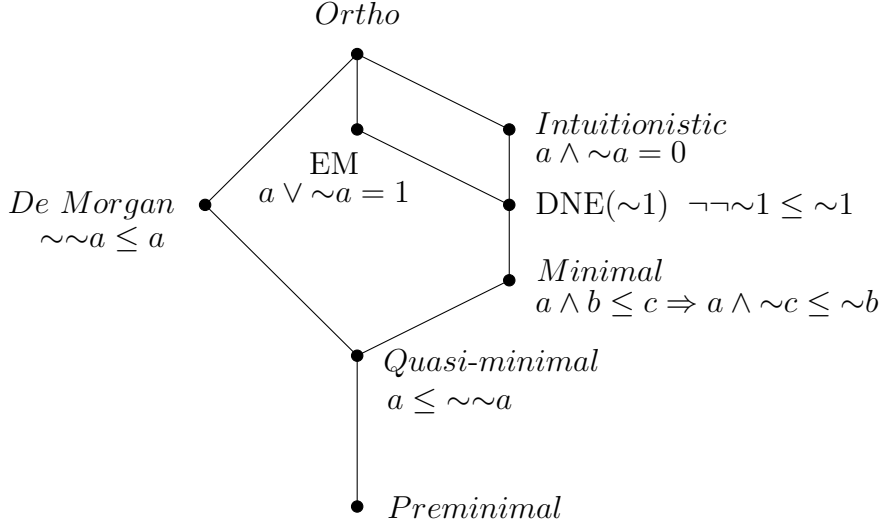

\noindent Thus, ${\sim}$ of $K_{im-\vee}$-algebra has to be placed on a separate path connecting DNE(${\sim}1$) and Ortho in the Kite with negation pair $({\sim},\neg)$ (Figure \ref{fig-dunnkite2}), marked by a node with property EM. Note also that $a \vee {\sim} a = 1$ and $a \wedge {\sim} a =0$ for all $a \in A$ imply that ${\sim}$ is ortho \cite{dunn1993}.

\subsection{Relational semantics for \texorpdfstring{$K_{im}$}~ and \texorpdfstring{$K_{im-\vee}$}~}
\label{relation-Kim}

One of the motivations of Dunn was to study negation as a modal impossibility operator in `compatibility' frames, in line with the work on negation 
by Vakarelov \cite{Vakarelov1989}. Let us first see what `compatibility' frames are, and define the semantics over such frames for the logic $K_i$.

\begin{definition}[Compatibility frames \cite{Dunn2005}]
\label{compframe}
$ $\\
A tuple $\mathcal{F}:=(W,C ,\leq)$ is called a {\it compatibility frame} if $(W,\leq)$ is a poset, 
and $C$ is a binary relation on $W$ satisfying the following condition for all $x,x',y,y' \in W$:
\[\mbox{If } x' \leq x, ~ y' \leq y \mbox{ and } xCy \mbox{ then } x'Cy'. \tag{C} \]
\end{definition}

Let us compare  the strictly condensed $N$-frames mentioned in Remark \ref{Nframe} of Section \ref{sec-KripkeILM} with  compatibility frames.

\begin{obs}$ $\\
{\rm $(W,\leq,R_N)$ is a strictly condensed $N$-frame if and only if  $({\leq R_N \leq^{-1}}) \subseteq R_N$. So,
\begin{align*}
& \forall x,y,x',y' \in W\bigl(({x' \leq x} ~\&~ {x R_N y}~\&~ {y' \leq y})\Rightarrow {x' R_N y'}\bigr).  
\end{align*}
According to Definition \ref{compframe} therefore, $(W,R_N,\leq)$ is a compatibility frame. Conversely, it is clear that any compatibility frame gives a strictly condensed $N$-frame.
In other words, the class of strictly condensed $N$-frames  is the same as that of compatibility frames. 
 }
\end{obs}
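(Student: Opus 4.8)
The plan is to observe that the defining condition of a strictly condensed $N$-frame and condition (C) of a compatibility frame are, once the composition of relations is spelled out, literally the same statement; the only thing to keep track of is the order in which the components of the tuple are listed.

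First I would recall from Remark \ref{Nframe}(2) (together with conditions (1) and (2) of Definition \ref{J-frame}) that $(W,\leq,R_N)$ is a strictly condensed $N$-frame exactly when $(W,\leq)$ is a poset and $(\leq R_N \leq^{-1}) \subseteq R_N$. I would then unfold the composition on the left: a pair $(a,b)$ lies in $\leq R_N \leq^{-1}$ iff there exist $x,y \in W$ with $a \leq x$, $x R_N y$ and $b \leq y$ (the last because $(y,b) \in \ \leq^{-1}$ means $b \leq y$). Consequently the inclusion $(\leq R_N \leq^{-1}) \subseteq R_N$ is equivalent to the sentence displayed in the Observation, $\forall x,y,x',y' \in W \bigl((x' \leq x ~\&~ x R_N y ~\&~ y' \leq y) \Rightarrow x' R_N y'\bigr)$.

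Next I would line this sentence up with condition (C) of Definition \ref{compframe}, ``if $x' \leq x$, $y' \leq y$ and $xCy$ then $x'Cy'$'', reading $C$ as $R_N$: these are the same constraint, with the hypotheses merely listed in a different order. Hence if $(W,\leq,R_N)$ is a strictly condensed $N$-frame, then $(W,R_N,\leq)$ satisfies (C) and is a compatibility frame; and conversely, if $(W,C,\leq)$ is a compatibility frame, then (C) is exactly $(\leq C \leq^{-1}) \subseteq C$, so $(W,\leq,C)$ is a strictly condensed $N$-frame. This gives the asserted coincidence of the two classes.

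There is no genuine obstacle here: the statement is a pure unwinding of definitions. The only place calling for a little care is getting the direction of the relational composition right --- in particular that the factor $\leq^{-1}$ produces the hypothesis $y' \leq y$ rather than $y \leq y'$ --- and noticing that the apparent difference between the tuples $(W,\leq,R_N)$ and $(W,C,\leq)$ is purely a matter of notation.
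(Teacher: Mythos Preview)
Your proposal is correct and matches the paper's own argument essentially line for line: both simply unfold the relational composition $(\leq R_N \leq^{-1}) \subseteq R_N$ into the displayed first-order sentence and observe that this is precisely condition (C) of Definition~\ref{compframe} with $C$ read as $R_N$. There is nothing to add; your care about the direction of $\leq^{-1}$ and the purely notational difference in the order of the tuple components is exactly the right emphasis.
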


The definitions of valuations and truth of a formula $\alpha$ at $x \in W$ under a valuation  on a compatibility frame $\mathcal{F}:=(W,C ,\leq)$ (denoted  $x \vDash \alpha$)  are then given in the standard way \cite{Dunn2005}.
For the compatibility frame $\mathcal{F}:=(W,C,\leq)$, the pair $\mathcal{M}:=(F, \vDash)$ is called a {\it model} of $K_i$. 
A consequence pair $(\alpha, \beta)$ is {\it valid} in a compatibility frame $\mathcal{F}$, denoted as $\alpha \vDash_\mathcal{F} \beta$, when for  every model $\mathcal{M}$ on the frame $\mathcal{F}$ and for all $x \in W$, $x \vDash \alpha \Rightarrow x \vDash \beta$.

It can be checked using induction on the number of connectives in a formula, that the following {\it hereditary} condition
holds for any formula $\alpha$ in $K_i$:\\
\centerline{for all $x,y \in W$, $x \leq y$ and $x \vDash \alpha$ imply $y \vDash \alpha.$}


\begin{theorem}
\label{Kicomplete}
{\rm \cite{Dunn2005}} For any two formulas $\alpha$ and $\beta$, 
$\alpha \vdash_{K_i}\beta$ if and only if $\alpha \vDash_{\mathcal{F}} \beta$ for any compatibility frame $\mathcal{F}:=(W,C,\leq)$.
\end{theorem}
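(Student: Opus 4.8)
The plan is to prove the two halves — soundness and completeness of $K_i$ with respect to compatibility frames — separately, by the standard Lindenbaum/canonical-model method for distributive substructural logics (this is essentially Dunn's argument in \cite{Dunn2005}). For soundness I would induct on the length of a derivation in $K_i$. The bounded-distributive-lattice axioms A1--A9 are validated by the semantic clauses for $\wedge$, $\vee$, $\top$, $\bot$ together with the hereditary condition noted above, exactly as for {\rm BDLL}; the only genuinely new verifications concern the preminimal-negation axioms A10--A12, which must be checked against the impossibility clause $x \vDash \neg\alpha \Leftrightarrow \forall y\,(xCy \Rightarrow y \nvDash \alpha)$. Here A10 (the contraposition rule) follows from condition (C) on $C$ together with upward persistence of truth, A11 (the $\vee$-linearity $\neg\alpha\wedge\neg\beta\vdash\neg(\alpha\vee\beta)$) from the disjunction clause, and A12 ($\top\vdash\neg\bot$) from the fact that $\bot$ is false at every world; each rule is then seen to preserve frame-validity.

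For completeness I would build the canonical model on the Lindenbaum--Tarski algebra $\mathcal{A}$ of $K_i$, which is a bounded distributive lattice equipped with a preminimal negation $\neg$. Take $W$ to be the set of prime filters of $\mathcal{A}$, order it by inclusion $\subseteq$, and define $x\mathbin{C}y$ to hold iff for every formula $\gamma$, $[\neg\gamma]\in x$ implies $[\gamma]\notin y$. Condition (C) is immediate from this definition, so $\mathcal{F}^{c}:=(W,C,\subseteq)$ is a compatibility frame, and the canonical valuation $V(p):=\{x\in W : [p]\in x\}$ is an upset. The heart of the argument is the truth lemma $x\vDash\alpha \Leftrightarrow [\alpha]\in x$, proved by induction on $\alpha$: the cases for $\wedge,\vee,\top,\bot$ are the usual prime-filter computations in a distributive lattice, and for the negation case the direction $[\neg\alpha]\in x \Rightarrow x\vDash\neg\alpha$ is read directly off the definition of $C$ and the induction hypothesis.

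The step I expect to be the main obstacle is the converse direction of the negation case: given $[\neg\alpha]\notin x$, one must produce a prime filter $y$ with $x\mathbin{C}y$ and $[\alpha]\in y$, whence the induction hypothesis gives $y\vDash\alpha$ and so $x\nvDash\neg\alpha$. The idea is to invoke the prime-filter separation theorem for distributive lattices to separate the principal filter $\uparrow[\alpha]$ from the ideal $I$ generated by $\{[\gamma] : [\neg\gamma]\in x\}$; the non-trivial point is that these two sets are disjoint, and this is precisely where the preminimal-negation axioms enter. If $[\alpha]\leq [\gamma_1]\vee\cdots\vee[\gamma_n]$ with each $[\neg\gamma_i]\in x$, then iterating A11 gives $[\neg\gamma_1]\wedge\cdots\wedge[\neg\gamma_n]\leq[\neg(\gamma_1\vee\cdots\vee\gamma_n)]$, A10 gives $[\neg(\gamma_1\vee\cdots\vee\gamma_n)]\leq[\neg\alpha]$, and since $x$ is a filter closed under finite meets this forces $[\neg\alpha]\in x$, contradicting the hypothesis; the degenerate empty-join case ($[\alpha]=0$) is excluded because A12 yields $[\neg\bot]=[\top]\in x$. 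With the truth lemma in hand, completeness follows routinely: if $\alpha\nvdash_{K_i}\beta$ then $[\alpha]\nleq[\beta]$ in $\mathcal{A}$, so some prime filter contains $[\alpha]$ but not $[\beta]$, and $\mathcal{F}^{c}$ under the canonical valuation refutes $\alpha\vDash\beta$.
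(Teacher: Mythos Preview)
The paper does not prove this theorem; it simply cites it from \cite{Dunn2005}. Your proposed argument is correct and is essentially Dunn's canonical-model proof: soundness by induction on derivations, completeness via prime filters of the Lindenbaum algebra with the compatibility relation $xCy$ defined by ``$[\neg\gamma]\in x\Rightarrow[\gamma]\notin y$'', and a truth lemma whose only non-trivial step is the converse negation case, handled by prime-filter separation of $\uparrow[\alpha]$ from the set $I=\{[\gamma]:[\neg\gamma]\in x\}$.

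One small simplification: you do not need to pass to the ideal \emph{generated} by $I$ and argue with finite joins. The set $I$ is already an ideal, since A10 gives downward closure and A11 gives closure under binary joins (both using that $x$ is a filter). Disjointness of $\uparrow[\alpha]$ from $I$ then reduces to the single observation that $[\alpha]\leq[\gamma]$ with $[\neg\gamma]\in x$ would force $[\neg\alpha]\in x$ by A10, contradicting the hypothesis. Your treatment of the ``empty-join'' case via A12 is correct in spirit but superfluous once $I$ is seen to be an ideal: A12 guarantees $[\bot]\in I$, so $I$ is never empty, and in any event $[\alpha]=[\bot]$ is ruled out because then $[\neg\alpha]=[\neg\bot]=[\top]\in x$.
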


As $K_{im}$ is an `extension' of $K_i$, it is then expected that the former will be sound and complete with respect to some class of compatibility frames. Let us now define {\it sub-compatibility frames}.

\begin{definition}[Sub-compatibility frame]
\label{subcompframe}\index{sub-compatibility frame}
Let $\mathcal{F}:=(W,C,\leq)$ be a compatibility frame such that 
\begin{enumerate}[label={($\arabic*$)}, noitemsep, topsep=0pt]
\item $C$ is symmetric,
\item $\forall x, y \in W (xCy \Rightarrow \exists z \in W (x \leq z ~\&~ y \leq z ~\&~ xCz)) \mbox{, and}$
\item $\forall x \in W \bigl(\forall y \in W \bigl(x \leq y \Rightarrow \exists z \in W (y \leq z ~\&~ \forall z' \in W~ (z \cancel{C} z'))\bigr) \nonumber \\
~\hfill \Rightarrow \forall z'' \in W ~ (x \cancel{C} z'')\bigr)$.
\end{enumerate}
$\mathcal{F}$ is called a {\it sub-compatibility frame}. $\mathfrak{F}_3$ denotes the class of all such frames.  \\
Sub-compatibility frames satisfying the additional condition $C \subseteq (\leq^{-1})$ are called {\it sub-compatibility identity frames}. The class of sub-compatibility identity frames is denoted by $\mathfrak{F}_3'$.
\end{definition}

The definitions of valuation, truth  and validity remain the same.
Let us note the  definitions for the truth of a formula $\alpha$ involving $\neg$ and ${\sim}$:
\begin{enumerate}[label={$\arabic*$.}, leftmargin=2 \parindent, topsep=0pt, noitemsep]
\item $x \vDash \neg \alpha$ if and only if $\forall y \in W$, $(x \leq y \Rightarrow y \nvDash \alpha)$.
\item $x \vDash  {\sim} \alpha$ if and only if $\forall y \in W$, $(x C y \Rightarrow y \nvDash \alpha).$
\end{enumerate}

As done in previous sections, let us investigate the special conditions defining the frame here, namely Conditions (2) and (3) in Definition \ref{subcompframe}. In \cite{Dunn2005}, it has been shown that Condition (2) is {\it canonical} to P$6:~\alpha \wedge \beta \vdash \gamma$ / $\alpha \wedge {\sim} \gamma \vdash {\sim} \beta$ from Proposition \ref{prop-Kim}, i.e. P$6$ is valid in a compatibility frame $\mathcal{F}:=(W,C,\leq)$ if and only if $C$ satisfies Condition (2).
\vskip 3pt 
\noindent Now recall the sequent P$7$ ${\rm (DNE({\sim}\top))}: {\neg} \neg {\sim} \top \vdash {\sim} \top$ from Proposition \ref{prop-Kim}. Validity of ${\rm DNE({\sim}\top)}$ in a sub-compatibility model $\mathcal{M}:=((W,C,\leq), \vDash)$ means the following.
\begin{align*}
\forall x \in W & (x \vDash \neg \neg {\sim} \top \Rightarrow x \vDash {\sim} \top))\\
\Leftrightarrow ~\forall x \in W & (\forall y \in W (x \leq y \Rightarrow y \nvDash \neg {\sim} \top ) \Rightarrow x \vDash {\sim} \top) \\
\Leftrightarrow ~\forall x \in W & \bigl(\forall y \in W (x \leq y \Rightarrow \exists z\in W (y \leq z ~\&~ z \vDash {\sim} \top)) \Rightarrow  x \vDash {\sim} \top\bigr)\\
\Leftrightarrow ~\forall x \in W & \bigl(\forall y \in W \bigl(x \leq y \Rightarrow \exists z\in W (y \leq z ~\&~ \forall z'\in W~ (z C z'))\bigr)\\
&  \hspace{67mm} \Rightarrow  \forall z''\in W~ (x C z'')\bigr).
\end{align*}
The last condition is just Condition (3). Thus, we have the following.

\begin{proposition}
Condition {\rm (3)} in Definition \ref{subcompframe} is canonical to {\rm DNE(${\sim}\top$)}.
\end{proposition}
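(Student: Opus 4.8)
The claim is that Condition (3) of Definition \ref{subcompframe} is \emph{canonical} to the sequent DNE($\sim\top$): $\neg\neg{\sim}\top\vdash{\sim}\top$, i.e.\ for any compatibility frame $\mathcal{F}:=(W,C,\leq)$, the sequent DNE($\sim\top$) is valid in $\mathcal{F}$ if and only if $C$ satisfies Condition (3). The displayed computation already preceding the statement does essentially all the work: it is a chain of \emph{equivalences} rewriting ``$\mathcal{M}\vDash\neg\neg{\sim}\top\Rightarrow\mathcal{M}\vDash{\sim}\top$'' for a model $\mathcal{M}$ on $\mathcal{F}$ step by step, using only the truth clauses for $\neg$ and ${\sim}$ (items 1 and 2 after Definition \ref{subcompframe}) together with the observation $x\vDash{\sim}\top\Leftrightarrow\forall z'\in W\,(x C z')$, which follows from clause 2 by noting that $z\nvDash\top$ never holds. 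So the plan is: first recall that $x\vDash{\sim}\top$ is equivalent to $\forall z'(xCz')$; then expand $x\vDash\neg\neg{\sim}\top$ one layer at a time exactly as in the displayed block, landing on Condition (3) quantified over $x\in W$; finally note this holds for all models $\mathcal{M}$ on $\mathcal{F}$ iff the frame condition itself holds, since the chain involved no valuation and the clauses are about the frame relations only.

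\begin{proof}
First note that for any world $x$ and any model $\mathcal{M}$ on $\mathcal{F}$, $x\vDash{\sim}\top$ holds if and only if $\forall z'\in W\,(x\cancel{C}z')$ is \emph{false}, i.e.\ the clause ``$xCy\Rightarrow y\nvDash\top$'' together with $y\vDash\top$ for all $y$ forces $x\vDash{\sim}\top\Leftrightarrow\neg\exists y\,(xCy)$; written positively, $x\vDash{\sim}\top\Leftrightarrow\forall z'\in W\,(xCz')$ fails exactly when some $C$-successor exists, so $x\nvDash{\sim}\top\Leftrightarrow\exists y\,(xCy)$. Using this and the truth clauses for $\neg$, we have, for a fixed model $\mathcal{M}=((W,C,\leq),\vDash)$,
\begin{align*}
\mathcal{M}\vDash\mathrm{DNE}({\sim}\top)
&\Leftrightarrow \forall x\in W\,(x\vDash\neg\neg{\sim}\top\Rightarrow x\vDash{\sim}\top)\\
&\Leftrightarrow \forall x\in W\,\bigl(\forall y\in W\,(x\leq y\Rightarrow y\nvDash\neg{\sim}\top)\Rightarrow x\vDash{\sim}\top\bigr)\\
&\Leftrightarrow \forall x\in W\,\bigl(\forall y\in W\,(x\leq y\Rightarrow\exists z\in W\,(y\leq z~\&~z\vDash{\sim}\top))\Rightarrow x\vDash{\sim}\top\bigr)\\
&\Leftrightarrow \forall x\in W\,\bigl(\forall y\in W\,(x\leq y\Rightarrow\exists z\in W\,(y\leq z~\&~\forall z'\in W\,(zCz')))\Rightarrow\forall z''\in W\,(xCz'')\bigr).
\end{align*}
The right-hand side is precisely Condition (3) of Definition \ref{subcompframe}. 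This derivation uses no properties of the valuation, only the truth clauses and the relations $\leq$ and $C$ of the frame; hence the displayed equivalence holds uniformly for every model $\mathcal{M}$ on $\mathcal{F}$. Therefore DNE($\sim\top$) is valid in the frame $\mathcal{F}$ (valid in every model on $\mathcal{F}$) if and only if $C$ satisfies Condition (3), which is the assertion that Condition (3) is canonical to DNE($\sim\top$).
\end{proof}

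\noindent The only subtlety worth flagging is the base fact $x\vDash{\sim}\top\Leftrightarrow\forall z'\,(xCz')$: one must remember that $y\vDash\top$ holds at \emph{every} world, so the clause $x\vDash{\sim}\top\Leftrightarrow\forall y\,(xCy\Rightarrow y\nvDash\top)$ collapses to the stated form. Past that, the argument is a mechanical unwinding of the two modal truth clauses, and there is no real obstacle — the hard conceptual content is already contained in the definition of ``canonical'' and in the truth clauses themselves.
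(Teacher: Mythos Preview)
Your approach is exactly the paper's --- unwind the truth clauses for $\neg$ and ${\sim}$ applied to $\top$ --- but you carry a sign error through the whole computation. From the clause $x\vDash{\sim}\alpha\Leftrightarrow\forall y\,(xCy\Rightarrow y\nvDash\alpha)$ together with $y\vDash\top$ for all $y$, one obtains
\[
x\vDash{\sim}\top \;\Leftrightarrow\; \forall y\,\bigl(xCy\Rightarrow\text{false}\bigr)\;\Leftrightarrow\;\forall z'\in W\,(x\,\cancel{C}\,z'),
\]
i.e.\ $x$ has \emph{no} $C$-successors. Your proof plan and your closing paragraph instead assert $x\vDash{\sim}\top\Leftrightarrow\forall z'\,(xCz')$, and the last line of your display accordingly reads $\forall z'\in W\,(zCz')$ and $\forall z''\in W\,(xCz'')$. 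But Condition~(3) of Definition~\ref{subcompframe} is stated with $\cancel{C}$, so your sentence ``the right-hand side is precisely Condition~(3)'' is literally false as you have written it. (The paper's own displayed computation preceding the Proposition contains the same slip, writing $C$ in place of $\cancel{C}$ in its final line; the Definition itself is correct, and your last line should match the Definition, not the typo.)

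Your opening paragraph is also internally inconsistent: you first write ``$x\vDash{\sim}\top$ holds if and only if $\forall z'\in W\,(x\cancel{C}z')$ is \emph{false}'', and then two clauses later derive the correct statement $x\vDash{\sim}\top\Leftrightarrow\neg\exists y\,(xCy)$, which is the \emph{negation} of what you just asserted. Once the sign is fixed throughout (replace $C$ by $\cancel{C}$ in the final line of the display and in the base fact), the argument is correct and coincides with the paper's.
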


We also note here that the  reverse implication in Condition (3) is always true. Consider the contraposition of the implication, i.e. for any $x \in W$:\\
\centerline{$\exists  y \in W \bigl(x \leq y ~\&~ \forall z \in W (y \leq z \Rightarrow \exists z' \in W~ z C z'))\bigr) \Rightarrow \exists z'' \in W~ (x C z'')$.}
Now let $y \in W$ such that $x \leq y$ and $\forall z \in W (y \leq z \Rightarrow \exists z' \in W~ z Cz')$. $y \leq y$ implies there exists $z' \in W$ such that $y Cz'$. Using condition (C), $yCz'$, $x \leq y$ and $z' \leq z'$ imply $xCz'$.


 In Definition \ref{def-Kim2}, we have given an equivalent version $K_{im}'$ of $K_{im}$. The axioms/rules involving the negation ${\sim}$ are P$2$-P$7$. Based on the above points, the presence of Conditions (2) and (3) in sub-compatibility frames ensures the validity of P$6$ and P$7$ in the class. Moreover, any sub-compatibility frame is a compatibility frame, and P$2$-P$5$ are valid in any compatibility frame in the language of $K_i$ (Theorem \ref{Kicomplete}). So we have the soundness result for $K_{im}$ with respect to  sub-compatibility frames.
One can obtain the completeness result by following similar steps as given in \cite{Dunn2005}.

\begin{theorem}\label{comp-Kim}
For any formulas $\alpha,\beta \in \tilde{F}$,
\begin{enumerate}[label={$\arabic*$.}, leftmargin=2 \parindent, topsep=0pt, noitemsep]
\item $\alpha \vDash_{\mathfrak{F}_3} \beta \Leftrightarrow \alpha \vdash_{K_{im}} \beta$, and
\item $\alpha \vDash_{\mathfrak{F}_3'} \beta \Leftrightarrow \alpha \vdash_{K_{im-{\vee}}} \beta$.
\end{enumerate}
\end{theorem}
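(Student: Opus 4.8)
The plan is to establish both biconditionals in the standard two halves --- soundness by induction on derivations, completeness by a Lindenbaum-style canonical model --- following the template used for $\mathrm{ML}$ and $\mathrm{IL}$ in \cite{Dunn2005} and mirrored for $\mathrm{ILM}$ in Section \ref{subsec-char}. For soundness, the direction $\alpha\vdash_{K_{im}}\beta\Rightarrow\alpha\vDash_{\mathfrak{F}_3}\beta$ is essentially the discussion preceding the statement: using the equivalent presentation $K_{im}'$ (Theorem \ref{prop-Kim2}), the ${\sim}$-rules and axioms are P2--P7 of Proposition \ref{prop-Kim}; every sub-compatibility frame is a compatibility frame, so P2--P5 are valid by the completeness of $K_i$ (Theorem \ref{Kicomplete}), while P6 and P7 are valid throughout $\mathfrak{F}_3$ because Conditions (2) and (3) of Definition \ref{subcompframe} are canonical to them. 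The lattice axioms A1--A9 and the intuitionistic-negation axioms A10--A15 are sound over any (sub-)compatibility frame, since $\neg$ is read off $\leq$ as in ordinary Kripke semantics. For $K_{im-\vee}$ it remains to check A18 over $\mathfrak{F}_3'$: at any $x$ we have $x\vDash\top$, and if $x\nvDash\alpha$ then for each $y$ with $xCy$ the condition $C\subseteq({\leq}^{-1})$ gives $y\leq x$, so by heredity $y\nvDash\alpha$; hence $x\vDash{\sim}\alpha$, so $x\vDash\alpha\vee{\sim}\alpha$.

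For completeness I would run the canonical-model construction. Call $T\subseteq\tilde F$ a \emph{theory} (with respect to $K_{im}$ or $K_{im-\vee}$) if $\top\in T$ and $T$ is closed under $\wedge$ and under derivability; $T$ is \emph{consistent} if $\bot\notin T$ and \emph{prime} if $\alpha\vee\beta\in T$ implies $\alpha\in T$ or $\beta\in T$. The extension lemma --- a consistent theory disjoint from a $\vee$-closed set extends to a prime theory disjoint from it --- transfers verbatim from Section \ref{subsec-char}. Let $W^c$ be the set of prime theories ordered by $\subseteq$, and put $P\,C^c\,Q$ iff for all $\alpha$, ${\sim}\alpha\in P\Rightarrow\alpha\notin Q$. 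Using the extension lemma one proves the membership lemma: $\neg\alpha\in P$ iff $\alpha\notin Q$ for every $Q\in W^c$ with $P\subseteq Q$, and ${\sim}\alpha\in P$ iff $\alpha\notin Q$ for every $Q\in W^c$ with $P\,C^c\,Q$. Next, $\mathcal{F}^c:=(W^c,C^c,\subseteq)$ is shown to be a sub-compatibility frame: Condition (C) is immediate from the definition of $C^c$ together with monotonicity of prime theories; symmetry of $C^c$ follows from $\alpha\vdash{\sim}{\sim}\alpha$ (P5); Condition (2) is the standard argument of \cite{Dunn2005}; and for Condition (3) one observes that ${\sim}\top\in R$ holds iff no $R'$ satisfies $R\,C^c\,R'$ (as $\top$ lies in every theory), so by the membership lemma the hypothesis of Condition (3) at $P$ is precisely $\neg\neg{\sim}\top\in P$, whence P7 yields ${\sim}\top\in P$, which is its conclusion. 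With the canonical valuation $v^c(p):=\{P\in W^c:p\in P\}$ the truth lemma $\mathcal{M}^c,P\vDash\alpha\Leftrightarrow\alpha\in P$ follows by induction on connectives, the $\neg$- and ${\sim}$-cases coming from the membership lemma and the induction hypothesis. Then $\alpha\nvdash_{K_{im}}\beta$ implies that the theory generated by $\alpha$ is consistent and omits $\beta$, so some $P\in W^c$ contains $\alpha$ but not $\beta$; the truth lemma gives $P\vDash\alpha$ and $P\nvDash\beta$, proving statement 1. For statement 2 I additionally check that the canonical frame for $K_{im-\vee}$ lies in $\mathfrak{F}_3'$: if $P\,C^c\,Q$ and $\alpha\in Q\setminus P$, then $\alpha\vee{\sim}\alpha\in P$ by A18, so primeness forces ${\sim}\alpha\in P$, giving $\alpha\notin Q$ --- a contradiction; thus $C^c\subseteq({\subseteq}^{-1})$.

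The only nonroutine steps are verifying that $\mathcal{F}^c$ is genuinely a sub-compatibility frame --- above all Condition (3), where the real content is recognizing the quantifier condition as the membership statement $\neg\neg{\sim}\top\in P$ and then invoking P7 --- and proving the nontrivial directions of the membership lemma, which require applying the extension lemma to suitably chosen $\vee$-closed sets exactly as in \cite{Dunn2005}; everything else is bookkeeping parallel to the $\hat N$-frame completeness proof of Section \ref{subsec-char}.
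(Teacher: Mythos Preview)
Your proposal is correct and follows essentially the same approach as the paper, which defers the argument to \cite{Dunn2005} and to the parallel canonical-model construction for $\mathrm{ILM}$ in Section~\ref{subsec-char}. Your verification of Condition~(3) for the canonical frame via the membership lemma and P7, and of $C^c\subseteq({\subseteq}^{-1})$ via A18 and primeness, are exactly the points the paper singles out (the latter mirroring the proof of Theorem~\ref{comp-ILM-dosen}(ii)).
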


\section{Connections between relational and algebraic semantics}
\label{sec-compare}

We have observed the duality between topological spaces and the algebras in Theorem \ref{dual-ccpba}. Let us now connect the  frames studied in the previous sections and the algebras.
 Kripke \cite{Kripke1965a} linked normal frames and {\it pBa}s (cf. \cite{chagrov,bezhanishvili}): every {\it pBa} $\mathcal{A}$ is embedded in the `complex algebra' of the `canonical frame' of $\mathcal{A}$; every normal frame $\mathcal{F}$  can be embedded into the `canonical' frame of the `complex algebra' of $\mathcal{F}$. Note that an {\it embedding} between two posets $(A,\leq)$ and $(A',\leq')$ is  a map $\phi: A \rightarrow A'$ such that for all $a,b \in A$, $a \leq b \Rightarrow \phi(a) \leq' \phi(b)$~ (Order-preserving) and $\phi(a) \leq' \phi(b) \Rightarrow a \leq b$ (Order-reflecting). Then, an {\it embedding} between two algebras or frames is just defined as an embedding between the underlying posets (preserving the operators/relations). We link sub-normal frames and {\it ccpBa}s similarly.  Embeddings between sub-normal frames and canonical algebras of sub-normal frames are first defined.

\begin{definition}[Embeddings between sub-normal frames]
\label{embeddings}\index{sub-normal frame!embedding}
$ $\\
Given two sub-normal frames $(W,\leq,Y_0)$ and $(W',\leq',Y_0')$, a mapping $f: W \rightarrow W'$ is an {\it embedding} if it satisfies the following conditions for all $a,b \in W$:
\begin{enumerate}[label={$\arabic*$.}, leftmargin=2 \parindent, topsep=0pt, noitemsep]
\item $a \leq b$ if and only if $f(a) \leq' f(b)$ (poset embedding), and 
\item $a \in Y_0$ if and only if $f(a) \in Y_0'$.
\end{enumerate}
\end{definition}

\begin{definition}[Complex algebra of a sub-normal frame]
\label{def-complex}\index{sub-normal frame!complex algebra}
Consider a sub-normal frame $\mathcal{F}:=(W,\leq,Y_0)$. 
Define the following operators on $Up(W)$, the set of all upsets of $W$. For any $U,V \in Up(W)$,
\begin{enumerate}[label={$\arabic*$.}, leftmargin=2 \parindent, topsep=0pt, noitemsep]
\item $U \rightarrow V := \{ w \in W ~ \mid ~ \forall v \in W(w \leq v \Rightarrow (v \in U \Rightarrow v \in V )) \}$,
\item $\neg U:= U \rightarrow \emptyset$, i.e. $\neg U:= \{ w \in W ~ \mid ~ \forall v \in W(w \leq v \Rightarrow v \notin U) \}$, and
\item ${\sim} U := U \rightarrow Y_0$, , i.e. ${\sim} U:= \{ w \in W ~ \mid ~ \forall v \in W((w \leq v ~\&~ v \in U) \Rightarrow v \in Y_0) \}$.
\end{enumerate}
The structure $Up(\mathcal{F}):=(Up(W), W, \emptyset, \cap, \cup, \rightarrow, \neg, {\sim})$  is called the {\it complex algebra} of the sub-normal frame $\mathcal{F}$.
\end{definition}

\begin{definition}[Canonical frame of a {\it ccpBa}]
\label{def-canonical}\index{sub-normal frame!canonical}
$ $\\
Consider a {\it ccpBa} $\mathcal{A}:=(A,1,0,\vee,\wedge,\rightarrow,\neg,{\sim})$. Let $X_A$ denote the set of all prime filters in $\mathcal{A}$. Define a set $Y_0$ as follows: $Y_0 := \{P \in X_A ~ \mid ~ {\sim} 1  \in P \}.$\\
Then the triple $\mathcal{F}_{\mathcal{A}} := (X_A,\subseteq,Y_0)$ is called the {\it canonical} frame of $\mathcal{A}$.
\end{definition}
 
It is easy to check that for a sub-normal frame $\mathcal{F}:=(W,\leq,Y_0) \in \mathfrak{F}_2$, the complex algebra  $Up(\mathcal{F})$ of $\mathcal{F}$ forms a {\it ccpBa}. Further, if $\mathcal{F}$ ($\in \mathfrak{F}_2'$) is a sub-normal identity frame then $Up(\mathcal{F})$ is a {\it c${\vee}$cpBa}. On the other hand, given a {\it ccpBa} $\mathcal{A}:=(A,1,0,\vee,\wedge,\rightarrow,\neg,{\sim})$, the canonical frame $\mathcal{F}_\mathcal{A} := (X_A,\subseteq,Y_0)$ is a sub-normal frame. If $\mathcal{A}$ is a {\it c${\vee}$cpBa} then $\mathcal{F}_{\mathcal{A}}$ is a sub-normal identity frame. 
This to and fro connection between sub-normal frames $\mathcal{F} \in \mathfrak{F}_2$ ($\mathfrak{F}_2'$) and {\it ccpBa}s ({\it c${\vee}$cpBa}s) gives us the following result. The proof is similar to that in the case of  {\it pBa}s.

\begin{theorem}
\label{frame-algebra}
$ $
\begin{enumerate}[label={$\arabic*$.}, leftmargin=2 \parindent, topsep=0pt, noitemsep]
\item Every ccpBa (or c${\vee}$cpBa) $\mathcal{A}$ is embeddable into the complex algebra $Up(\mathcal{F}_{\mathcal{A}})$ of the canonical frame $\mathcal{F}_{\mathcal{A}}$ of $\mathcal{A}$.
\item Any sub-normal (sub-normal identity) frame $\mathcal{F}\in \mathfrak{F}_2$ (or $\mathfrak{F}_2'$) can be embedded into the canonical frame  $\mathcal{F}_{Up(\mathcal{F})}$ of the complex algebra $Up(\mathcal{F})$ of $\mathcal{F}$.
\end{enumerate}
\end{theorem}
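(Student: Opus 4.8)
The plan is to transfer the classical Kripke-style arguments for \textit{pBa}s (cf. \cite{bezhanishvili,chagrov}) to the richer signature, relying on the facts recorded just before the statement: the complex algebra of a sub-normal (sub-normal identity) frame is a \textit{ccpBa} (\textit{c${\vee}$cpBa}), and the canonical frame of a \textit{ccpBa} (\textit{c${\vee}$cpBa}) is a sub-normal (sub-normal identity) frame. Since the bounded-lattice and implication parts of both claims are verbatim the \textit{pBa} arguments, the task reduces to tracking the two negations $\neg,{\sim}$ and the distinguished set $Y_0$.

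For part 1, given a \textit{ccpBa} $\mathcal{A}=(A,1,0,\vee,\wedge,\rightarrow,\neg,{\sim})$, I would take the usual Stone map $\Phi:A\to Up(X_A)$, $\Phi(a):=\{P\in X_A\mid a\in P\}$, and recall that it is a bounded-lattice embedding: $\Phi(1)=X_A$; $\Phi(0)=\emptyset$ (prime filters are proper); $\Phi$ preserves $\cap,\cup$; and $\Phi$ is order-reflecting, since if $a\nleq b$ the filter $\{x\mid a\leq x\}$ and the ideal $\{x\mid x\leq b\}$ are disjoint, so the prime-filter separation lemma (the one underlying Theorem \ref{dual-pba}) yields a prime $Q$ with $a\in Q$, $b\notin Q$. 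Next, $\Phi$ preserves $\rightarrow$: the inclusion $\Phi(a\rightarrow b)\subseteq\Phi(a)\rightarrow\Phi(b)$ uses $a\wedge(a\rightarrow b)\leq b$; for the converse, if $a\rightarrow b\notin P$ then $b$ does not lie in the filter generated by $P\cup\{a\}$ (else $p\wedge a\leq b$ for some $p\in P$, forcing $a\rightarrow b\in P$), so separating that filter from the principal ideal $\{x\mid x\leq b\}$ gives a prime $Q\supseteq P$ with $a\in Q$, $b\notin Q$, contradicting $P\in\Phi(a)\rightarrow\Phi(b)$. The two negations then come for free: from $\neg a=a\rightarrow 0$, $\neg U=U\rightarrow\emptyset$ and $\Phi(0)=\emptyset$ one gets $\Phi(\neg a)=\neg\Phi(a)$; and from ${\sim}a=a\rightarrow{\sim}1$, ${\sim}U=U\rightarrow Y_0$, together with $Y_0=\{P\mid{\sim}1\in P\}=\Phi({\sim}1)$ (Definition \ref{def-canonical}), one gets $\Phi({\sim}a)={\sim}\Phi(a)$. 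Since $\mathcal{F}_{\mathcal{A}}$ is a sub-normal frame (a sub-normal identity frame if $\mathcal{A}$ is a \textit{c${\vee}$cpBa}), $Up(\mathcal{F}_{\mathcal{A}})$ is a \textit{ccpBa} (\textit{c${\vee}$cpBa}), and $\Phi$ is the desired embedding.

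For part 2, given $\mathcal{F}=(W,\leq,Y_0)\in\mathfrak{F}_2$, I would define $\varepsilon:W\to X_{Up(\mathcal{F})}$ by $\varepsilon(w):=\{U\in Up(W)\mid w\in U\}$. Each $\varepsilon(w)$ is a prime filter of $Up(\mathcal{F})$ (proper since $\emptyset\notin\varepsilon(w)$; prime since $w\in U\cup V$ implies $w\in U$ or $w\in V$); $\varepsilon$ is order-preserving because upsets are $\leq$-monotone; it is order-reflecting because $\uparrow w\in Up(W)$, so $\varepsilon(w)\subseteq\varepsilon(w')$ forces $w'\in{\uparrow w}$, i.e.\ $w\leq w'$; and $\varepsilon$ transports $Y_0$ correctly, since the distinguished set of $\mathcal{F}_{Up(\mathcal{F})}$ is $\{P\mid{\sim}W\in P\}$ with ${\sim}W=W\rightarrow Y_0=Y_0$ ($Y_0$ being an upset), so $\varepsilon(w)$ lies in it iff $Y_0\in\varepsilon(w)$ iff $w\in Y_0$. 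Hence $\varepsilon$ meets the requirements of Definition \ref{embeddings}, and when $\mathcal{F}\in\mathfrak{F}_2'$ the frame $\mathcal{F}_{Up(\mathcal{F})}$ is again a sub-normal identity frame, so the same $\varepsilon$ works. The only genuinely non-routine point is the reverse inclusion $\Phi(a)\rightarrow\Phi(b)\subseteq\Phi(a\rightarrow b)$ in part 1 — precisely where the prime-filter extension lemma is used, exactly as for \textit{pBa}s; everything concerning $\neg$, ${\sim}$ and $Y_0$ is bookkeeping once that step is in hand.
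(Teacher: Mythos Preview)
Your proposal is correct and follows exactly the same approach as the paper: the paper simply names the maps $h(a):=\{P\in X_A\mid a\in P\}$ and $g(w):=\{U\in Up(W)\mid w\in U\}$ (your $\Phi$ and $\varepsilon$) and asserts they are the desired embeddings, deferring the routine verifications to the \textit{pBa} case. Your write-up supplies precisely those details, including the only new checks---that $\Phi$ respects ${\sim}$ via $Y_0=\Phi({\sim}1)$ and that $\varepsilon$ transports $Y_0$ correctly via ${\sim}W=Y_0$---so nothing is missing.
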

\begin{proof}$ $ We just mention the maps involved.\\
1. Consider the complex algebra $Up(\mathcal{F}_{\mathcal{A}}):=(Up(X_A), X_A, \emptyset, \cap, \cup, \rightarrow, \neg, {\sim})$ of the canonical frame $\mathcal{F}_{\mathcal{A}}:=(X_A,\subseteq, Y_0)$ of a {\it ccpBa} $\mathcal{A}:=(A,1,0,\vee, \wedge,\rightarrow,\neg,{\sim})$. The map $h: A \rightarrow Up(X_A)$ such that for all $a \in A$,$h(a):= \{P \in X_A ~|~  a \in P \}$, is a monomorphism from the {\it ccpBa} (or {\it c${\vee}$cpBa}) $(A,1,0, \vee,\wedge, \rightarrow,\neg,{\sim})$ to the {\it ccpBa} (or {\it c${\vee}$cpBa}) $(Up(X_A), \emptyset, X_A, \cap, \cup, \rightarrow, \neg,{\sim})$.\\
2. Consider the canonical frame $(X_{Up(W)}, \subseteq, Y_{0_{Up(W)}})$ of the complex algebra $Up(\mathcal{F}):=(Up(W),W,\emptyset,\cap,\cup,\rightarrow,\neg, {\sim})$ of the sub-normal frame (or sub-normal identity frame) $\mathcal{F}:=(W, \leq, Y_0)$. The map $g: W \rightarrow X_{A}$ such that\\
\centerline{ for all $w \in W$, $g(w):= \{U \in Up(W) ~|~  w \in U \}$,}
is an embedding from $(W,\leq,Y_0)$ to $(X_{Up(W)}, \subseteq,Y_{0_{Up(W)}})$.
\end{proof}

\noindent A similar result can also be obtained for $K_{im}$ and $K_{im}$-${\vee}$. Let us first give the definition of embeddings between sub-compatibility frames and complex algebras of  sub-compatibility frames.

\begin{definition}[Embeddings between sub-compatibility frames]
\label{embeddings2}\index{sub-compatibility frame!embedding}
$ $\\
Given two sub-compatibility frames $(W,C,\leq)$ and $(W',C',\leq')$, a mapping $f: W \rightarrow W'$ is an {\it embedding} if it satisfies the following conditions for all $x,y \in W$:
\begin{enumerate}[label={$\arabic*$.}, leftmargin=2 \parindent, topsep=0pt, noitemsep]
\item $x \leq y$ if and only if $f(x) \leq' f(y)$, and 
\item $xCy $ if and only if $f(x)C'f(y)$.
\end{enumerate}
\end{definition}

\begin{definition}[Complex algebra of a sub-compatibility frame]
\label{def-complex2}\index{sub-compatibility frame!complex algebra}
$ $\\
Given a sub-compatibility frame $\mathcal{F}:=(W,C,Y_0)$, the {\it complex algebra} of the sub-compatibility frame $\mathcal{F}$ is the structure $Up(\mathcal{F}):=(Up(W), W, \cap, \cup, \neg, {\sim})$, where
the operators  $\neg,\sim$ on $Up(W)$ are defined as:
\begin{enumerate}[label={$\arabic*$.}, leftmargin=2 \parindent, topsep=0pt, noitemsep]
\item $\neg U:= \{w \in W~|~ \forall v \in W(w \leq v \Rightarrow v \notin U)\}$, and
\item ${\sim} U := \{w \in W~|~ \forall v \in W(w C v \Rightarrow v \notin U)\}$.
\end{enumerate}

\end{definition}

\begin{definition}[Canonical frame of a $K_{im}$-algebra]
\label{def-canonical2}\index{sub-compatibility frame!canonical}
$ $\\
Given a $K_{im}$-algebra $\mathcal{A}:=(A,1,0,\vee,\wedge,\neg,{\sim})$, the triple $\mathcal{F}_{\mathcal{A}} := (X_A,C,\subseteq)$ is called the {\it canonical} frame, where 
the relation $C$ on $X_A$ is defined as follows. For any $P,Q \in X_A$,\\
\centerline{$PCQ$ if and only if (for all $a \in A$, ${\sim} a \in P \Rightarrow a \notin Q$).}
\end{definition}

\noindent We can check that the complex algebra of a sub-compatibility (identity) frame forms a $K_{im}$-algebra ($K_{{im}\mbox{-}{\vee}}$-algebra), and the canonical frame of a $K_{im}$-algebra ($K_{{im}\mbox{-}{\vee}}$-algebra) forms a sub-compatibility (identity) frame. Similar to Theorem \ref{frame-algebra}, this to and fro connection between $K_{im}$-algebras and sub-compatibility frames gives the following result.

\begin{theorem}
\label{frame-algebra2}
$ $
\begin{enumerate}[label={$\arabic*$.}, leftmargin=2 \parindent, topsep=0pt, noitemsep]
\item Every $K_{im}$-algebra (or $K_{{im}\mbox{-}{\vee}}$-algebra) $\mathcal{A}$ can be embedded into the complex algebra $Up(\mathcal{F}_{\mathcal{A}})$ of the canonical frame $\mathcal{F}_{\mathcal{A}}$ of $\mathcal{A}$.
\item Any sub-compatibility (sub-compatibility identity) frame $\mathcal{F}\in \mathfrak{F}_3$ (or $\mathfrak{F}_3'$) can be embedded into the canonical frame  $\mathcal{F}_{Up(\mathcal{F})}$ of the complex algebra $Up(\mathcal{F})$ of $\mathcal{F}$.
\end{enumerate}
\end{theorem}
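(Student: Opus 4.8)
The plan is to follow the pattern of the proof of Theorem~\ref{frame-algebra}: exhibit the two natural maps and verify that each is an embedding in the appropriate sense. The structural facts that the complex algebra $Up(\mathcal{F})$ of a sub-compatibility (sub-compatibility identity) frame is a $K_{im}$-algebra ($K_{im-{\vee}}$-algebra), and that the canonical frame $\mathcal{F}_{\mathcal{A}}$ of a $K_{im}$-algebra ($K_{im-{\vee}}$-algebra) is a sub-compatibility (sub-compatibility identity) frame, have already been noted before the theorem; so only the embeddings require argument.

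For part~1, given a $K_{im}$-algebra $\mathcal{A} := (A,1,0,\vee,\wedge,\neg,{\sim})$ with canonical frame $\mathcal{F}_{\mathcal{A}} := (X_A,C,\subseteq)$ (Definition~\ref{def-canonical2}), I would take $h : A \rightarrow Up(X_A)$, $h(a) := \{P \in X_A \mid a \in P\}$, and show it is a monomorphism from $\mathcal{A}$ into $Up(\mathcal{F}_{\mathcal{A}})$. Injectivity and preservation of $1,0,\vee,\wedge$ are exactly the Stone representation of bounded distributive lattices via prime filters. The new content is preservation of the two negations: one must check $h(\neg a) = \neg h(a)$, i.e. $\neg a \in P \Leftrightarrow \forall Q \in X_A\,(P \subseteq Q \Rightarrow a \notin Q)$, and $h({\sim}a) = {\sim}h(a)$, i.e. ${\sim}a \in P \Leftrightarrow \forall Q \in X_A\,(P C Q \Rightarrow a \notin Q)$. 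These are the $K_{im}$-analogues of Lemma~\ref{lem-can1}; the nontrivial direction in each case follows by an extension-lemma argument (cf. Lemma~\ref{lem-pel}) that extends a suitable filter to a prime filter witnessing the failure, where for ${\sim}$ one also uses that Condition~(2) of Definition~\ref{subcompframe} is canonical to P$6$ (Proposition~\ref{prop-Kim}).

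For part~2, given a sub-compatibility frame $\mathcal{F} := (W,C,\leq) \in \mathfrak{F}_3$ with complex algebra $Up(\mathcal{F})$ (Definition~\ref{def-complex2}) and canonical frame $\mathcal{F}_{Up(\mathcal{F})} := (X_{Up(W)}, C', \subseteq)$ (the relation $C'$ on prime filters of $Up(\mathcal{F})$ being given by Definition~\ref{def-canonical2}), I would take $g : W \rightarrow X_{Up(W)}$, $g(w) := \{U \in Up(W) \mid w \in U\}$. One first checks that $g(w)$ is a prime filter of $Up(\mathcal{F})$ (immediate, as membership of $w$ commutes with $\cap$ and $\cup$, is upward closed, and $\emptyset \notin g(w)$). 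Then the two embedding clauses: $w \leq w' \Leftrightarrow g(w) \subseteq g(w')$ is obtained from the fact that each principal upset $\uparrow w$ is an upset, so if $w \not\leq w'$ then $\uparrow w \in g(w) \setminus g(w')$. For $wCw' \Leftrightarrow g(w)\,C'\,g(w')$, unwinding the definition of $C'$ and of ${\sim}$ in $Up(\mathcal{F})$ reduces the claim to: $wCw' \Leftrightarrow$ for all $U \in Up(W)$, $w \in {\sim}U \Rightarrow w' \notin U$. The forward direction is immediate from the clause defining ${\sim}U$. For the converse, assume $w \cancel{C} w'$ and take $U := \uparrow w'$; if $wCv$ and $w' \leq v$ then the compatibility condition (C) of Definition~\ref{compframe} forces $wCw'$, a contradiction, so $w \in {\sim}U$ while $w' \in U$, contradicting $g(w)\,C'\,g(w')$.

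The main obstacle I anticipate is the $C$-part of the two maps: in part~1, producing, from ${\sim}a \notin P$, a prime $C$-successor of $P$ that omits $a$ — this is where the canonicity of Condition~(2) for P$6$ is genuinely used — and in part~2 the order/compatibility interplay via Condition~(C) when reflecting $C'$ back to $C$. The $K_{im-{\vee}}$ refinement adds only the routine checks that the extra frame condition $C \subseteq (\leq^{-1})$ transfers along $g$ and that the algebraic condition EM transfers along $h$, both immediate from the preceding remarks; the to-and-fro connection then composes exactly as in Theorem~\ref{frame-algebra}.
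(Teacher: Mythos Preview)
Your proposal is correct and follows essentially the same approach as the paper, which for Theorem~\ref{frame-algebra2} gives no proof at all beyond ``Similar to Theorem~\ref{frame-algebra}''; you use exactly the maps $h(a)=\{P\in X_A\mid a\in P\}$ and $g(w)=\{U\in Up(W)\mid w\in U\}$ that the paper specifies in the proof of Theorem~\ref{frame-algebra}, and your verifications (Stone representation for the lattice part, the compatibility condition~(C) for reflecting $C'$ back to $C$ via $U=\uparrow w'$) are correct. One small imprecision: in part~1, the nontrivial direction for ${\sim}$ does not really need the canonicity of Condition~(2) for P$6$; it suffices to observe that $I:=\{b\mid {\sim}b\in P\}$ is an ideal (by P$2$ and P$3$) disjoint from $\uparrow a$ (by P$2$ and ${\sim}a\notin P$), and then apply the prime-filter extension lemma.
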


\section{Conclusions}
\label{conc}

The algebraic classes {\it ccpBa}s and {\it c${\vee}$cpBa}s are studied through examples, properties, representation theorems, and comparison with existing algebras. The corresponding logics ILM and ILM-${\vee}$ are defined, and  ILM is compared with existing logics ML and IL. Further, ILM is shown to be equivalent to a logic which is an extension of ${\rm JP}'$, a special case of Peirce's logic, and hence observed to be decidable. 
A study of the features of the two negations is carried out next through the logics   $K_{im}$ and $K_{{im}\mbox{-}{\vee}}$ that do not have an implication operator. Algebraic and relational semantics of these logics are given. A study of properties of $K_{im}$ and $K_{{im}\mbox{-}{\vee}}$-algebras is conducted. Dunn's Kite of negations is enhanced to define a Kite with negation pair $({\sim},{\neg})$, where $\neg$ is intuitionistic. It is then shown that the negations in the algebras occupy distinct positions in this kite. The property ${\rm DNE}({\sim} 1)$ ($\neg \neg {\sim} 1 \leq {\sim} 1$) leads to a position strictly between the Minimal and Intuitionistic nodes, while the property EM ($a \vee {\sim} a = 1$) yields a new path from ${\rm DNE}({\sim} 1)$ to the Ortho node. Finally, relations between frames and algebras defined in the work are established through duality results.

The study of relational semantics of ILM and ILM-${\vee}$ helps in  understanding the behaviour of the negation operators in the logics. We have given two different semantics  for ILM through the classes $\mathfrak{F}_1$ and $\mathfrak{F}_2$ of frames - $\mathfrak{F}_1$ using Do{\v{s}}en's $N$-frames and $\mathfrak{F}_2$ using Segerberg's $j$-frames. The difference in both lies in the treatment of negation. For $\mathfrak{F}_1$, both negations are considered as unary modal connectives and the semantics is defined using the modal accessibility relations $R_{N_1}$ and $R_{N_2}$. For $\mathfrak{F}_2$, both negations are treated as unary connectives, their semantics being defined using the relation $\leq$ and $Y_0$. This naturally gives the idea of constructing new frame classes and corresponding semantics, by considering one of the two negations of {\rm ILM} as a unary connective, and the other negation as a unary `impossibility' modal connective - of the types $(X,\leq,R_{N_2})$ and $(X,\leq,R_{N_1},Y_0)$. In fact, we have seen the frames of the type $(X,\leq,R_{N_2})$ as sub-compatibility frames (where the relation $R_{N_2}$ was represented by $C$). It is then naturally expected that, by suitably defining conditions on relations $R_{N_2}$, $R_{N_1}$ and $Y_0$, {\rm ILM} ({\rm ILM}-${\vee}$) can also be determined by the class of frames of the types $(X,\leq,R_{N_2})$ and $(X,\leq,R_{N_1},Y_0)$. An inter-translation between these classes can also be obtained using the following relations.
\begin{enumerate}[label={{\rm (\arabic*)}}, leftmargin=1.5 \parindent, noitemsep, topsep=0pt]
\item $x R_{N_1} y\Leftrightarrow \exists z (x \leq z \mbox{ and } y \leq z)$,
\item $x R_{N_2} y\Leftrightarrow \exists z (x \leq z \mbox{ and } y \leq z \mbox{ and } z \notin Y_0)$, and
\item $Y_0 := \{x ~ \mid ~ \forall z (x \cancel{R}_{N_2} z) \}$.
\end{enumerate}
\begin{center}
\begin{tikzpicture}
    \node (A1) at (0,0) {$(X,\leq,R_{N_1},Y_0)$};
		\node (B1) at (0,2) {$(X,\leq,Y_0)$};
		\node (B2) at (5,2) {$(X,\leq,R_{N_1},R_{N_2})$};
		\node (C1) at (5,0) {$(X,\leq,R_{N_2})$};
\draw[transform canvas={xshift=1ex},->] (A1) -- (B1) node[above, midway, xshift=-2.5cm] {};
\draw[transform canvas={xshift=-1ex},->] (B1) -- (A1) node[above] {};

\draw[transform canvas={xshift=1ex},->] (C1) -- (B2) node[above, midway, xshift=2.5cm, yshift=-0.3cm] {};
\draw[transform canvas={xshift=-1ex},->] (B2) -- (C1) node[above] {};

\draw[transform canvas={yshift=-0.7ex}, shorten <=2ex, shorten >=2ex, ->] (A1) -- (C1) node[above, midway,yshift=-0.2cm] {};
\draw[transform canvas={yshift=0.7ex}, shorten >=2ex, shorten <=2ex, ->] (C1) -- (A1) node[above, midway,yshift =0.2cm] {};

\draw[transform canvas={yshift=-0.7ex}, shorten <=2ex, shorten >=2ex, ->] (B1) -- (B2) node[above, midway,yshift=-0.2cm] {};
\draw[transform canvas={yshift=0.7ex}, shorten >=2ex, shorten <=2ex, ->] (B2) -- (B1) node[above, midway,yshift =0.2cm] {};
\end{tikzpicture}
\end{center}


Investigations of properties of negations have resulted in various schemes of logical systems. Some of the work in this direction may be found in \cite{Dosen1999,Johansson,sikorski1953,Segerberg1968} and more recently, in \cite{colacito2017,Dunn2005,odintsov2007,odintsov2008}. A common approach adopted is that a `base logic with negation' with minimum properties on negation is first defined, and new logics are obtained by adding axioms over the existing ones. This is followed by defining relational semantics for the base logic, and obtaining canonical properties for various negation properties. Odintsov \cite{odintsov2008} studied the class of extensions of minimal logic, and presented them in a diagram, just like Dunn's Kite. In the direction of logics with two negations, the class of extensions of Nelson logic (denoted $\mathbf{N4}^{\bot}$) and its semantics is also discussed by Odintsov \cite{odintsov2005,odintsov2008}. However, we have shown that negations in ILM differ from those in Nelson logic. Another relevant and independent work on logics with two negations is done in \cite{Dunn2005}. Taking a cue from the dual properties of negations \cite{shramko2005}, Dunn defined logics with two negations obtained by merging two minimal systems (dual to each other) \cite{Dunn2005}. Diagrammatically, it is represented by uniting the lopsided kite of negations with the dual lopsided kite of negations. The base negation is kept as preminimal (along with its dual) in the `United Kite'. The Kite of negation pair $(\sim,\neg)$ presented  in this work opens up a different direction of study. We have made the properties of one negation ($\sim$) of the pair vary, while the other ($\neg$)  is fixed   to be intuitionistic. $\neg\neg{\sim} \top \vdash {\sim}\top$ (DNE(${\sim}\top$)) connects both the negations. 
This suggests a line of work where one may start with a base logic with a pair $({\sim},{\neg})$ of preminimal negations connected by an appropriate condition. A Kite of the negation pair $({\sim},{\neg})$ may  be developed, where one may  wish to ensure that the base logic is extendable to existing logics with two negations such as Nelson logic and its extensions and Dunn's logic with dual negations.




%
%

\bibliographystyle{abbrv}
\bibliography{myrefs}

%

\end{document}